\let\textquotedbl="
\tikzstyle{every pin}=[
\tikzstyle{block} = [draw, fill=blue!20, rectangle, minimum height=3em, minimum width=6em]
\tikzstyle{sum} = [draw, fill=blue!20, circle, node distance=1cm]
\tikzstyle{input} = [coordinate]
\tikzstyle{output} = [coordinate]
\tikzstyle{pinstyle} = [pin edge={to-,thin,black}]
\ltdsetup{\today}{}{}
\newtheorem{theorem}{Theorem}[section]
\newtheorem{definition}[theorem]{Definition}
\newtheorem{proposition}[theorem]{Proposition}
\newtheorem{example}[theorem]{Example}
\newtheorem{remark}[theorem]{Remark}
\title{
A new Lagrangian approach to optimal control of second-order systems
}
\date{\today}
\author
{Michael Konopik\footnote{Friedrich-Alexander-Universität Erlangen-Nürnberg (FAU), Institute of Applied Dynamics (LTD), Immerwahrstrasse 1, 91058 Erlangen, Germany. Email: \href{mailto:michael.konopik@fau.de}{michael.konopik@fau.de}}\ \thanks{The work of this author has been supported by Deutsche Forschungsgemeinschaft (DFG), Grant No. LE 1841/12-1, AOBJ: 692092.}
\qquad
Sigrid Leyendecker\footnote{Friedrich-Alexander-Universität Erlangen-Nürnberg (FAU), Institute of Applied Dynamics (LTD), Immerwahrstrasse 1, 91058 Erlangen, Germany. Email: \href{mailto:sigrid.leyendecker@fau.de}{sigrid.leyendecker@fau.de}}\ 
\qquad 
 Sofya Maslovskaya\footnote{\textit{First author}. Universität Paderborn (UPB), Numerical Mathematics and Control (NMC), Warburger Straße 100, 33098 Paderborn, Germany. Email: \href{mailto:sofya.maslovskaya@upb.de}{sofya.maslovskaya@upb.de}}\ \thanks{The work of this author has been supported by Deutsche Forschungsgemeinschaft (DFG), Grant No. OB 368/5-1, AOBJ: 692093}
\\
Sina Ober-Bl\"obaum\footnote{  Universität Paderborn (UPB), Numerical Mathematics and Control (NMC), Warburger Straße 100, 33098 Paderborn, Germany. Email: \href{mailto:sinaober@math.uni-paderborn.de}{sinaober@math.uni-paderborn.de}}\
\qquad
Rodrigo T.~Sato Mart{\'\i}n de Almagro\footnote{\textit{First author}, \textit{corresponding author}. Friedrich-Alexander-Universität Erlangen-Nürnberg (FAU), Institute of Applied Dynamics (LTD), Immerwahrstrasse 1, 91058 Erlangen, Germany. Email: \href{mailto:rodrigo.t.sato@fau.de}{rodrigo.t.sato@fau.de}}\
}
\begin{document}
\maketitle



\section*{Abstract}
In this work, we propose and study a new approach to formulate the optimal control problem of second-order differential equations, with a particular interest in those derived from force-controlled Lagrangian systems. The formulation results in a new hyperregular control Langrangian and, thus, a new control Hamiltonian whose equations of motion provide necessary optimality conditions.
We compare this approach to Pontryagin's maximum principle (PMP) in this setting, providing geometric insight into their relation. This leads us to define an extended Tulczyjew's triple with controls. Moreover, we study the relationship between Noether symmetries of this new formulation and those of the PMP.


    \vspace{2mm}

    \textbf{\bfseries \sffamily Keywords:} \textit{optimal control},
    \textit{Pontryagin's maximum principle},
    \textit{second-order differential equations},
    \textit{Lagrangian systems}, \textit{Hamiltonian systems}, \textit{necessary optimality conditions}, \textit{calculus of variations},  \textit{symplectic geometry},
    \textit{symmetries}.

    \vspace{2mm}
    
\textbf{\bfseries \sffamily Mathematics Subject Classification:} 
49K15,
34H05,
53Zxx,
53D05,
70G65,
70Hxx,
70H15,
70H50,
70H33,
70Q05.


\section{Introduction}


    Optimal control problems (OCPs), in particular for mechanical systems, possess a rich internal geometric structure. Generically, one has a controlled dynamical system, the state dynamics, and a performance index, a cost function. An OCP links this state dynamics with an adjoint or costate dynamics, living in the cotangent space over the state dynamics, through this cost. This endows the problem with a presymplectic structure \cite{EcheverriaEnriquez03}, as Pontryagin's maximum principle (PMP) illustrates in its explicitly Hamiltonian formulation \cite{pontryagin1962,Agrachev91}. At optima, the joint state-adjoint dynamics is symplectic. This structure has analytical and numerical consequences which make it worthwhile studying and preserving.\\

    Whenever the underlying state dynamics possesses some additional structure, the OCP inherits this structure too. In the case of mechanical systems, one typically has a preexisting structure, e.g.~symplectic or Poisson \cite{AbrahamMarsden,ArnoldMethods}, though usually the addition of control forces destroys this structure. Nevertheless, if these forces are not so strong as to completely overpower the conservative part of the dynamics, it can be argued that the original structure is still important for the overall behavior of the system. In the case of regular forced Lagrangian systems \cite{MarsdenWest01}, whose preexisting structure is symplectic, numerical results seem to corroborate this argument, favoring the use of symplectic, and more so variational, integrators in that case.\\

    It is because of this structure, and the depth and deceptive simplicity provided by the theory of variational integrators that pushed us to pursue a Lagrangian approach to OCPs in \cite{Leyendecker2024new}. This work is intended as a natural continuation and generalization of that article. In that work, we introduced a new Lagrangian approach for optimal control problems of a particular form, namely, those with running cost functions quadratic in the controls, and affine-controlled systems.\\

    In this work, we discuss this new approach more broadly, for general running costs and controlled second-order dynamics. We also treat the case where the given dynamics comes from a force-controlled Lagrangian system. Moreover, we study the relationship between this formulation and PMP. One of our main results shows that both the resulting new control Lagrangian and Hamiltonian formulations are equivalent to PMP and the key to this relation is Tulczyjew's triple \cite{TulczyjewLag,TulczyjewHam}. Examples are provided throughout the text to highlight and clarify some of the content.
    

    We begin with a general overview of OCPs and their variational treatment, introducing the concept of algebraic regularity, superregularity and hyperregularity. Additionally, we present the concept of controlled second-order differential equation (SODE) from a geometric perspective and define the concepts of full and under-actuation.\\
    
    In Section \ref{sec:ocp_sodes}, we proceed to explore the case of OCPs for controlled SODEs, introducing our new Lagrangian approach and proving its equivalence with standard augmented approaches. We also briefly present an alternative Lagrangian approach that leads to higher-order Lagrangians \cite{Colombo2016} and compare it with ours. The interpretation of the boundary terms in our
formulation 
    is discussed, along with their connection to the theory of generating functions. Next, we reformulate our approach in the case of regular force-controlled Euler-Lagrange equations and begin examining how it relates to Pontryagin's approach for the corresponding force-controlled Hamiltonian equations.\\

    We postpone most of the description and analysis of the geometric spaces where our approach takes place to Section \ref{sec:tulcyjew_and_PMP}. In it, we discuss how the formulation is inextricably related to Tulczyjew's triple and proceed to construct the necessary spaces to include the controls. This allows us to show the relationship between our formulations and Pontryagin's Hamiltonian in an intrinsic way.\\

    Finally, in Section \ref{sec:symmetries_noether} we discuss the invariance properties of our new Lagrangian and its consequences using Noether's theorem.


\subsection{Optimal control problems from a geometric and variational point of view}

    Consider a vector bundle $(\mathcal{E}, \pi^{\mathcal{E}}, \mathcal{M})$, where the base space $\mathcal{M}$ is a smooth manifold and the typical fiber of $\mathcal{E}$ is the vector space $\mathcal{N}$ \cite{Saunders89,Lee13}. Let us assume that $n = \dim \mathcal{N} \leq \dim \mathcal{M} = m$. In this context, we refer to $\mathcal{M}$ as the state space, $\mathcal{N}$ as the control space and $\mathcal{E}$ as the state-control space. Thus, in local adapted coordinates, a point in $\mathcal{E}$ can be written as $(x^1,...,x^m,u^1,...,u^n)$, or $(x,u)$ in short, so that $\pi^{\mathcal{E}}(x,u) = x$. A time-continuous (and time-independent) controlled system on $\mathcal{M}$ with state-control space $\mathcal{E}$ is defined by a bundle morphism over $\mathcal{M}$, $f : \mathcal{E} \to T\mathcal{M}$, with $(T \mathcal{M}, \tau_{\mathcal{M}}, \mathcal{M})$ the tangent bundle of $\mathcal{M}$. A curve $\sigma: [0,T] \to \mathcal{E}$, with $T \in \mathbb{R^+}$, generates a controlled trajectory $\gamma = \pi^{\mathcal{E}} \circ \sigma$ if and only if
    \begin{equation*}
        \frac{d}{d t} \gamma(t) = f(\sigma(t)), \quad \text{for almost all } t \in [0,T].
    \end{equation*}

    Consider an initial submanifold $\mathcal{M}_0 \subset \mathcal{M}$, and assume $\mathcal{M}_T \subset \mathcal{A}(T,\mathcal{M}_0)$, where $\mathcal{A}(T,\mathcal{M}_0)$ denotes the reachable set at time $T$ starting from $\mathcal{M}_0$ \cite{Boscain2019}. Next, consider the set
    \begin{equation*}
        \Sigma = \left\lbrace \sigma : [0, T] \to \mathcal{E} \;\vert\; (\pi^{\mathcal{E}} \circ \sigma)(0) \in \mathcal{M}_0, \, (\pi^{\mathcal{E}} \circ \sigma)(T) \in \mathcal{M}_T\right\rbrace\,,
    \end{equation*}
    the space of admissible curves. Then, a (Bolza-type) optimal control problem in $\Sigma$ for such a controlled system can be defined by an objective function $J: \Sigma \to \mathbb{R}$,
    \begin{equation*}
        J(\sigma) = \phi(\pi^{\mathcal{E}}(\sigma)(T)) + \int_{0}^T C(\sigma) \, d t,
    \end{equation*}
    where $\phi: \mathcal{M}_T \to \mathbb{R}$ is referred to as terminal cost or Mayer term, and $C: \mathcal{E} \to \mathbb{R}$ as running cost or Lagrange term. Thus, the optimal control problem can be stated as
    \begin{equation*}
        \min_{\sigma \in \Sigma} J(\sigma), \text{ subject to } \frac{d}{dt}(\pi^{\mathcal{E}} \circ \sigma)(t) = f(\sigma(t))
    \end{equation*}
    or, using local coordinates, as the more familiar
    \begin{equation}
        \label{eq:OCP}
        \min_{(x,u) \in \Sigma} J(x,u), \text{ subject to } \dot{x}(t) = f(x(t),u(t)).\tag{OCP}
    \end{equation}

    Pontryagin's maximum or minimum principle gives us necessary conditions for $\sigma$ to be optimal. In order to formulate the theorem, we need to define the \emph{control Hamiltonian} of the problem, a function $\mathcal{H}_{\lambda_0}: T^* \mathcal{M} \oplus_\mathcal{M} \mathcal{E} \to \mathbb{R}$, with $\lambda_0 \in \mathbb{R}$, locally written as
    \begin{equation}
        \label{eq:Pontryagins_control_Hamiltonian}
        \mathcal{H}_{\lambda_0}(x,\lambda,u) = \left\langle \lambda, f(x,u) \right\rangle_{\mathcal{M}} + \lambda_0 C(x,u)\,.
    \end{equation}
    Here $(T^* \mathcal{M}, \pi_{\mathcal{M}}, \mathcal{M})$ is the cotangent bundle of $\mathcal{M}$, $\oplus_{\mathcal{M}}$ denotes the Whitney sum over $\mathcal{M}$ and $\left\langle \cdot, \cdot \right\rangle_{\mathcal{M}}: T^*\mathcal{M} \oplus_{\mathcal{M}} T \mathcal{M} \to \mathbb{R}$ is the natural pairing of covectors and vectors on $\mathcal{M}$. The covector $\lambda$ receives the name of \emph{costate} or \emph{adjoint variable}. $\lambda_0$ receives the name of \emph{abnormal multiplier}. Since, whenever $\lambda_0 \neq 0$, its effective contribution in the theorem only serves to rescale the covector $\lambda$, one can restrict to $\lambda_0 \in \{-1,0,1\}$. In the case, $\lambda_0 \in \{-1,0\}$, we can talk about Pontryagin's maximum principle and minimum with $\lambda_0 \in \{0,1\}$.

    \begin{theorem}[Pontryagin's maximum principle (PMP) \cite{Clarke13}]
    Let $x \in W^{1,\infty}([0,T],\mathcal{M})$, $u \in L^{\infty}([0,T],\mathcal{N})$ s.t. $(x,u) \in \Sigma$. Let $\phi \in C^1(\mathcal{M}_T, \mathbb{R})$, and $C \in C^0(\mathcal{E},\mathbb{R})$, $f \in C^0(\mathcal{E},T\mathcal{M})$, both continuously differentiable in $x$. Further, assume $(\bar{x},\bar{u}) \in \Sigma$ is a local minimizer of \eqref{eq:OCP} and consider another curve $(\bar{x},\lambda): [0,T] \to T^* \mathcal{M}$, and a number $\lambda_0 \in \left\lbrace -1, 0 \right\rbrace$. Then, the following conditions are satisfied:
    \begin{itemize}
        \item (non-triviality) $(\lambda_0,\lambda(t)) \neq 0$, $\forall t \in [0,T]$,
        \item (transversality) $\lambda(0) \in (T_{x(0)} \mathcal{M}_0)^0$, $\lambda(T) - \lambda_0 \phi'(x(T)) \in (T_{x(T)} \mathcal{M}_T)^0$,
        \item (adjoint dynamics) $\dot{\lambda}(t) = - D_1 \mathcal{H}_{\lambda_0}(\bar{x}(t),\lambda(t),\bar{u}(t))$, for almost all $t \in [0,T]$,
        \item (maximization) $\mathcal{H}_{\lambda_0}(\bar{x}(t),\lambda(t),\bar{u}(t)) \geq \mathcal{H}_{\lambda_0}(\bar{x}(t),\lambda(t),u(t))$, $\forall u(t) \in (\pi^{\mathcal{E}})^{-1}(\bar{x}(t))$.
    \end{itemize}
    \end{theorem}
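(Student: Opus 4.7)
The plan is to follow Pontryagin's classical needle-variation argument combined with a Hahn--Banach separation in tangent space, which is the geometric skeleton of every modern proof of PMP (including Clarke's \cite{Clarke13}). The underlying idea is that local optimality forces the endpoint of the cost-augmented trajectory to sit on the boundary of an appropriate reachable cone, and any supporting hyperplane at that point produces the covector $(\lambda_0,\lambda(T))$.

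First I would augment the state by an auxiliary coordinate $x^{0}$ satisfying $\dot x^{0}=C(x,u)$ with $x^{0}(0)=0$, so that the OCP becomes the minimization of $x^{0}(T)+\phi(x(T))$ over the extended controlled system $\dot{\tilde x}=\tilde f(\tilde x,u)$ on $\tilde{\mathcal M}=\mathbb R\times\mathcal M$, with constraints $\tilde x(0)\in\{0\}\times\mathcal M_{0}$ and $\tilde x(T)\in\mathbb R\times\mathcal M_{T}$. For the fixed optimal pair $(\bar x,\bar u)$, at every Lebesgue point $\tau\in(0,T)$ of $t\mapsto\tilde f(\bar x(t),\bar u(t))$ and each admissible value $v\in\mathcal N$, construct the needle variation $u_{\varepsilon}(t)=v$ on $[\tau-\varepsilon,\tau]$ and $u_{\varepsilon}(t)=\bar u(t)$ elsewhere. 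A Gronwall estimate on the perturbed trajectory $\tilde x_{\varepsilon}$ shows that, to first order in $\varepsilon$, it induces an admissible tangent vector $w_{\tau,v}=\Phi(T,\tau)\bigl[\tilde f(\bar x(\tau),v)-\tilde f(\bar x(\tau),\bar u(\tau))\bigr]\in T_{\tilde x(T)}\tilde{\mathcal M}$, where $\Phi(\cdot,\tau)$ solves the variational equation $\dot y=D_{1}\tilde f(\bar x,\bar u)\,y$ with $y(\tau)=\tilde f(\bar x(\tau),v)-\tilde f(\bar x(\tau),\bar u(\tau))$.

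Next I would form the Pontryagin cone $\mathcal K\subset T_{\tilde x(T)}\tilde{\mathcal M}$ as the convex conic hull of all such $w_{\tau,v}$ together with $\Phi(T,0)\bigl(\{0\}\times T_{\bar x(0)}\mathcal M_{0}\bigr)$. Local optimality, combined with the terminal constraint $x(T)\in\mathcal M_{T}$, prevents $\mathcal K$ from meeting the open half-space in which $x^{0}+\phi(x)$ strictly decreases modulo $T_{x(T)}\mathcal M_{T}$; otherwise a finite convex combination of needles would produce a feasible competitor with lower cost. Applying Hahn--Banach yields a nonzero $\tilde\lambda(T)=(\lambda_{0},\lambda(T))\in T^{*}_{\tilde x(T)}\tilde{\mathcal M}$ with $\lambda_{0}\le 0$, annihilating $\mathcal K$ from one side; this immediately gives the transversality condition at $T$. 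Transporting $\tilde\lambda(T)$ backwards by the adjoint flow $\Phi(T,t)^{*}$ defines $(\lambda_{0},\lambda(t))$; differentiating and using constancy of $\lambda_{0}$ yields $\dot\lambda=-D_{1}\mathcal H_{\lambda_{0}}$, while the pairing $\langle\tilde\lambda(T),w_{\tau,v}\rangle\le 0$ at every Lebesgue point $\tau$ is precisely the pointwise maximization of $\mathcal H_{\lambda_{0}}$ over $u$. Transversality at $0$ follows from annihilation of the propagated $T_{\bar x(0)}\mathcal M_{0}$, and non-triviality at every $t$ from linearity of the adjoint equation and non-triviality at $T$.

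The main obstacle is the low regularity assumed here: $\bar u\in L^{\infty}$ is merely measurable and $f$ is only $C^{0}$ in $u$, so the variational equation only holds almost everywhere, the needles must be localized at Lebesgue points, and continuity of the flow map $u\mapsto x$ requires careful $L^{1}$ Gronwall estimates. This is the technically delicate part of the classical Pontryagin proof. Clarke's route \cite{Clarke13} bypasses some of these issues by replacing the needle machinery with Ekeland's variational principle applied to a penalized functional on $L^{1}$, sidestepping differentiability in $u$ but keeping the same separation-of-cones geometry at the core.
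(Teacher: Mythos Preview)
The paper does not prove this theorem: it is stated as a known result and attributed to \cite{Clarke13}, with no proof environment following the statement. There is therefore nothing in the paper to compare your proposal against.

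That said, your outline is a faithful sketch of the classical needle-variation/separation argument, and you correctly note that Clarke's own treatment in \cite{Clarke13} replaces the needle machinery by an Ekeland-principle penalization. Since the paper's intent is merely to quote PMP as background before restricting to the smooth variational setting, a full proof here would be out of scope; a citation is all that is expected.
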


    Here, $(T_x \mathcal{M})^{0}$ denotes the annihilator of $T_x \mathcal{M}$, defined as
    \begin{equation*}
        (T_x \mathcal{M})^{0} = \left\lbrace \lambda \in T_x^* \mathcal{M} \;\vert\; \left\langle \lambda, v\right\rangle_{\mathcal{M}} = 0, \,\forall v \in T_{x} \mathcal{M} \right\rbrace.
    \end{equation*}
    As written, the previous result applies even if control constraints are considered, i.e. $\mathcal{N}$ is not a vector space but a subset of it. If both $C \in C^1(\mathcal{E},\mathbb{R})$ and $f \in C^1(\mathcal{E},T\mathcal{M})$, then the maximization condition reduces to
    \begin{equation}
        \label{eq:smooth_maximization_cond}
        D_3 \mathcal{H}_{-1}(\bar{x},\lambda,\bar{u}) = 0
    \end{equation}
    for all $t$ in the unconstrained case.\\
    
    Whenever an optimum requires $\lambda_0 = 0$, it is called an abnormal solution. However, it can be shown that if $\mathcal{M}_0 = \left\lbrace x_0 \in \mathcal{M} \right\rbrace$ and $\mathcal{M}_T \in \mathrm{int}\, \mathcal{A}(T,x_0)$ with respect to the topology of $\mathcal{M}$, or $\mathcal{M}_T = \mathcal{M}$, then $\lambda_0$ can be set to $-1$ \cite{Clarke13}. We will restrict to this case. 

    Clearly, in a thorough study of optimal control problems, the analytic regularity, i.e. the smoothness class, of these curves and functions is very important, particularly whenever $C$ or $f$ are not continuously differentiable with respect to $u$ or constraints on controls are to be imposed, i.e. if only a submanifold with boundary of $\mathcal{E}$ is considered \cite{Sussmann98,Liberzon12,Clarke13}.
    
    Our intention here, however, is to restrict ourselves to the unconstrained case and assume all functions are sufficient differentiable. In particular, it is sufficient to assume $C \in C^1(\mathcal{E},\mathbb{R})$, $f \in C^1(\mathcal{E},T\mathcal{M})$, then $x \in C^2([0,1],\mathcal{M})$ and $u \in C^1([0,T],\mathcal{N})$, and the optimal control can be tackled with the standard tools of the calculus of variations \cite{Barbero07}. However, in the following we will assume for simplicity that all functions are $C^{\infty}$. In this case, we can consider the extremization of the functional defined by the augmented objective function
    \begin{equation}
        \label{eq:generic_augmented_objective}
        \tilde{J}(x,\lambda,u) = \phi(x(T)) + \int_{0}^T \left[ C(x(t),u(t)) + \left\langle \lambda(t), \dot{x}(t) - f(x(t),u(t))\right\rangle_{\mathcal{M}} \right]  \, d t.
    \end{equation}
    Here, $(x,\lambda,u)$ is the local trivialization of a curve on $T^*\mathcal{M} \oplus_{\mathcal{M}} \mathcal{E}$. The resulting necessary optimality conditions read
    \begin{itemize}
        \item (state dynamics) $\dot{x}(t) = f(x(t),u(t))$,
        \item (adjoint dynamics) $\dot{\lambda}(t) = D_1 C(x(t),u(t)) - D_1 f(x(t),u(t))^* \lambda(t)$,
        \item (transversality) $\lambda(T) = - \phi'(x(T))$,
        \item (maximization) $0 = D_2 C(x(t),u(t)) - D_2 f(x(t),u(t))^* \lambda(t)$.
    \end{itemize}
    where ${\cdot}^*$ denotes the adjoint under the natural pairing.\\

\begin{remark}
    Using matrix notation, if both states and adjoints are assumed column matrices, the adjoint dynamics and the maximization condition take the form
    \begin{align*}
        \dot{\lambda}(t)^{\top} &= D_1 C(x(t),u(t)) - \lambda(t)^{\top} D_1 f(x(t),u(t))\,\\
        0 &= D_2 C(x(t),u(t)) - \lambda(t)^{\top} D_2 f(x(t),u(t))\,,
    \end{align*}
    respectively. We will use this notation later on for local computations.
\end{remark}

    The previous equations coincide with those from PMP under the previously stated conditions. In particular, we recuperate the differentiable maximization condition \eqref{eq:smooth_maximization_cond} for all $t$.

    The fact that we only have this smooth version of the condition at our disposal has some important consequences for the solvability of the OCP. Let us introduce some definitions in this regard.

\begin{definition}
    The vector bundle morphism $\mathbb{F}C : \mathcal{E} \to \mathcal{E}^*$ over the identity defined by
    \begin{equation*}
        \left\langle \mathbb{F}C(x,u), w
 \right\rangle_{\mathcal{E}} = \left.\frac{d}{dt} C(x,u+tw) \right\vert_{t=0}\,,
    \end{equation*}
    is called the fiber derivative of $C$. Locally $\mathbb{F}C(x,u) = (x, D_2 C(x,u))$.
\end{definition}

\begin{definition}
    If \eqref{eq:smooth_maximization_cond} establishes a local bundle map from $T^*\mathcal{M}$ to $\mathcal{E}$ over the identity, we say the OCP posed by \eqref{eq:generic_augmented_objective} is algebraically regular. Otherwise, we say it is algebraically singular.
    Further, if the fiber derivative is a local diffeomorphism we say algebraically superregular. If it establishes a global diffeomorphism, then we say it is algebraically hyperregular.
\end{definition}

These definitions formalize the fact that in some cases \eqref{eq:smooth_maximization_cond} allows us to find a relationship between $u$ and $(x,\lambda)$. If the OCP is algebraically singular, this equation fails to provide the necessary relationship and one needs to resort to the more general inequality condition. If the OCP is algebraically regular, then the equation establishes this relationship implicitly. Locally, this means that the matrix $D_3 D_3 \mathcal{H}_{-1}(x,\lambda,u)$ is of full rank \cite{Barbero07}. If the OCP is algebraically super/hyperregular, then we may establish a locally/globally isomorphic relation between $u$ and all or a subset of $\lambda$. This nomenclature was chosen in analogy to well-established ones for Lagrangian mechanics, which will be discussed in Section~\ref{sssec:force_controlled EL_eqs}.

\begin{example}
Let $\mathcal{M} = \mathcal{N}= \mathbb{R}$.
    \begin{enumerate}
        \item Let $C(x,u) = k(x)+h(x) u $ and $f(x,u) = f_1(x)u$. Equation \eqref{eq:smooth_maximization_cond} reduces to
    \begin{equation*}
        - \lambda f_1(x) + h(x) = 0
    \end{equation*}
    which gives us no information on $u$. Thus, the problem is algebraically singular.
   %
     %
        \item Let $C(x,u) = h(x) u$ and $f(x,u) = f_1(x)u + f_2(x) u^2$. Equation \eqref{eq:smooth_maximization_cond} reduces to
    \begin{equation*}
        -\lambda [f_1(x) + 2 f_2(x) u] + h(x) = 0
    \end{equation*}
    from which we get that $u = \frac{h(x) - \lambda f_1(x)}{2 \lambda f_2(x)}$, which means that the OCP is algebraically regular away from $f_2^{-1}(0)$ and $\lambda = 0$. However, this is clearly not algebraically superregular since $D_{22}C(x,u) = 0$ everywhere.
        \item Let $C(x,u) = g(x) u^2$ and $f(x,u) = f_1(x)u$. Equation \eqref{eq:smooth_maximization_cond} reduces to
    \begin{equation*}
        - \lambda f_1(x) + 2 g(x)u = 0
    \end{equation*}
    The problem is algebraically superregular away from $g^{-1}(0)$. Moreover, if $g$ is monotonous and never zero, the problem is algebraically hyperregular.
    \end{enumerate}
\end{example}

\begin{example} Let us consider a linear-quadratic (LQ) problem. Let $\mathcal{M} = \mathbb{R}^m$, $\mathcal{N} = \mathbb{R}^n$, $C(x,u) = u^\top R u$ with $R \geq 0$ a symmetric degenerate matrix and $f(x,u) = Ax + Bu$ with $(A,B)$ a controllable pair of matrices. Equation \eqref{eq:smooth_maximization_cond} reduces to
    \begin{equation*}
        2 R u = B^\top \lambda.
    \end{equation*} which does not permit us to express $u$ as a function of $(x, \lambda)$.
\end{example}


\subsection{Controlled second-order systems}

    We will restrict our scope further, to the case of controlled second-order systems. For this, we assume that $\mathcal{M} = T\mathcal{Q}$, with $\mathcal{Q}$ a smooth manifold. Since we will be focusing on systems derived from Lagrangian mechanics, we denote $\mathcal{Q}$ as configuration space. Locally, we write $x = (q,v)$.

    \begin{definition}[Semi-spray / SODE]
        Consider $\tau_{T\mathcal{Q}}: TT\mathcal{Q} \to T\mathcal{Q}$ and $T \tau_{\mathcal{Q}}: TT\mathcal{Q} \to T\mathcal{Q}$. Locally, if $(q,v,X_q,X_v) \in TT\mathcal{Q}$, then $\tau_{T\mathcal{Q}}(q,v,X_q,X_v) = (q,v)$ and $T\tau_{\mathcal{Q}}(q,v,X_q,X_v) = (q,X_q)$.
        
        $X \in \Gamma_{\mathrm{loc}}(\mathcal{Q},TT\mathcal{Q})$ is called a semi-spray or second-order differential equation (SODE), if $X$ is a mutual section of $(TT\mathcal{Q}, \tau_\mathcal{Q} \circ \tau_{T\mathcal{Q}},\mathcal{Q})$ and $(TT\mathcal{Q}, \tau_\mathcal{Q} \circ T\tau_{\mathcal{Q}},\mathcal{Q})$.
     
        Locally, this implies that $X_q(q,v) = v$, and $X$ defines the differential equation
        \begin{align*}
            \dot{q}(t) &= v(t),\\
            \dot{v}(t) &= X_v(q(t),v(t)),
        \end{align*}
        or, equivalently,
        \begin{align*}
            \ddot{q}(t) = X_v(q(t),\dot{q}(t)).
        \end{align*}
    \end{definition}

    In the case of a controlled system, we work on $(\mathcal{E}, \pi^{\mathcal{E}}, T\mathcal{Q})$ but focus on the resulting dynamics on $T\mathcal{Q}$, so it makes sense to consider sections of
    $TT\mathcal{Q} \oplus_{T\mathcal{Q}} \mathcal{E}$. In this space, we have the following structural projections induced by $\tau_{T\mathcal{Q}}$ and $T \tau_{\mathcal{Q}}$:
\begin{center}
    \begin{tikzcd}
        TT\mathcal{Q} \oplus_{T\mathcal{Q}} \mathcal{E} \arrow[rr, "\mathrm{pr}_2"] \arrow[d, "\mathrm{pr}_1"'] &  & \mathcal{E} \arrow[d, "\pi^{\mathcal{E}}"] \\
        TT\mathcal{Q} \arrow[rr, "\tau_{T\mathcal{Q}}"'] &  & T\mathcal{Q}
    \end{tikzcd}
    \hspace{2cm}
    \begin{tikzcd}
        TT\mathcal{Q} \oplus_{T\mathcal{Q}} \mathcal{E} \arrow[rr, "\widetilde{\mathrm{pr}}_2"] \arrow[d, "\mathrm{pr}_1"'] &  & \mathcal{E} \arrow[d, "\pi^{\mathcal{E}}"] \\
        TT\mathcal{Q} \arrow[rr, "T \tau_\mathcal{Q}"']  &  & T\mathcal{Q}
    \end{tikzcd}
\end{center}
    Locally,
    \begin{align*}
        \mathrm{pr}_1(q,v,X_q,X_v,u) &= (q,v,X_q,X_v),\\
        \mathrm{pr}_2(q,v,X_q,X_v,u) &= (q,v,u),\\
        \widetilde{\mathrm{pr}}_2(q,v,X_q,X_v,u) &= (q,X_v,u).
    \end{align*}
    
    With these we can provide the following definition.

    \begin{definition}[Controlled SODE]
        $X \in \Gamma_{\mathrm{loc}}(\mathcal{E},TT\mathcal{Q}\oplus_{T\mathcal{Q}} \mathcal{E})$ is said to be a controlled SODE if and only if it is simultaneously a section of $\mathrm{pr}_2$ and $\widetilde{\mathrm{pr}}_2$. In bundle coordinates, a section of the former has the form
        \begin{equation*}
            (q,v,X_q(q,v,u),X_v(q,v,u),u)\,.
        \end{equation*}
        Thus, being a controlled SODE implies that $X_q(q,v,u) = v$ and it defines the differential equations
        \begin{align*}
            \dot{q}(t) &= v(t),\\
            \dot{v}(t) &= X_v(q(t),v(t),u(t)),
        \end{align*}
        or, equivalently,
        \begin{align*}
            \ddot{q}(t) = X_v(q(t),\dot{q}(t),u(t)).
        \end{align*}
    \end{definition}

    \begin{remark}
        Restricting to the case of controlled SODEs, implies that we are forfeiting the possibility of having direct control over the velocities of our states.
    \end{remark}
    
    \begin{remark}
        \label{rmk:controlled_SODE_lower_control_space}
        Notice that, in contrast to the ordinary SODE case, a controlled SODE is not defined as $X \in \Gamma_{\mathrm{loc}}(\mathcal{Q},TT\mathcal{Q}\oplus_{T\mathcal{Q}} \mathcal{E})$ since this would only allow for \emph{so-called} feedback controls. Moreover, it is not generally possible to define a subbundle $\check{\mathcal{E}}$ that locally trivializes into $\mathcal{Q} \times \mathcal{N}$, i.e. something of the form $(q,u)$, without extra assumptions. However, in applications, that is frequently the case, since it is common for $\mathcal{E}$ to be itself trivial. In such a case, it makes sense to talk about sections $\Gamma_{\mathrm{loc}}(\check{\mathcal{E}},TT\mathcal{Q}\oplus_{T\mathcal{Q}} \mathcal{E})$.
    \end{remark}

    \begin{definition}[Full actuation]
        A controlled SODE $X$ is said to be fully actuated if it maps surjectively onto $T^{(2)}\mathcal{Q}$. In coordinates, this means that $X_v(q,v,u)$ is surjective.
    \end{definition}
    
    Here, $T^{(2)}\mathcal{Q}$ denotes the second-order tangent bundle \cite{deLeonRodrigues85}, a fiber bundle which can be regarded as the space of $2$-jets with source at the origin of $\mathbb{R}$ and target $\mathcal{Q}$, or the subbundle of the double tangent bundle $TT\mathcal{Q}$ defined by $\tau_{T\mathcal{Q}} = T\tau_{\mathcal{Q}}$.\\

    Whenever we have full actuation and $\dim \mathcal{N} > \dim \mathcal{Q}$, we talk about \emph{over-actuation}. For our purposes, we will always assume that $\dim \mathcal{N} \leq \dim \mathcal{Q}$ and not pursue this case any further.\\
    
    Further, we assume for simplicity that $X$ maps injectively onto $T^{(2)}\mathcal{Q}$. Thus, in the fully actuated case, i.e. $\dim \mathcal{N} = \dim \mathcal{Q}$, we have a bijection. Whenever $\dim \mathcal{N} < \dim \mathcal{Q}$ we talk about \emph{under-actuation}.

    Locally, if:
    \begin{itemize}
        \item $\mathrm{rank} \, D_3 X_v(q,v,u) = \dim \mathcal{Q}$, we have full actuation;
        \item $\mathrm{rank} \, D_3 X_v(q,v,u) < \dim \mathcal{Q}$, we have under-actuation.
    \end{itemize}

    \begin{example}
        \label{exp:actuation}
        Let $\mathcal{Q} = \mathbb{R}^d$, $\mathcal{N} = \mathbb{R}^n$ and consider $X_v(q,v,u) = f_0(q,v) + f_1(q,v) u$ with $f_1(q,v) \in M_{d,n}(\mathbb{R})$. Clearly if $n \neq d$, we cannot possibly have full actuation. With our injectivity assumption, by the rank-nullity theorem, we have full actuation if and only if $n = d$.
    \end{example}

    \subsubsection{Force-controlled Euler-Lagrange equations}
    \label{sssec:force_controlled EL_eqs}

    A Lagrangian system is defined by a pair of configuration space and Lagrangian function, $(\mathcal{Q},L)$. We assume that $L \in C^{k}(T\mathcal{Q},\mathbb{R})$, $k \geq 2$. Consider the space of functions,
    \begin{equation*}
        C^{k}([t_a,t_b],q_a,q_b) = \left\lbrace  q \in C^k([t_a,t_b],\mathcal{Q}) \; \vert \; q(t_a) = q_a, q(t_b) = q_b \right\rbrace\,.
    \end{equation*}
    The action is defined as the function
    $S : C^{k}([t_a,t_b],q_a,q_b) \to \mathbb{R}$,
    \begin{equation}
        \label{eq:Lagrangian_action}
        S(q) = \int_{t_a}^{t_b} L(q(t),\dot{q}(t)) \, dt.
    \end{equation}
    Hamilton's principle states that physical trajectories of the system are in one-to-one correspondence with critical points of the action. The equation that characterizes these critical points in adapted coordinates is the celebrated Euler-Lagrange equation,
    \begin{equation*}
        \frac{d}{dt}\left(D_2 L(q(t),\dot{q}(t))\right) - D_1 L(q(t),\dot{q}(t)) = 0\,.
    \end{equation*}
    We can expand this equation to make its second-order character more explicit,
    \begin{equation*}
       D_{2 2} L(q(t),\dot{q}(t)) \, \ddot{q}(t) + D_{1 2} L(q(t),\dot{q}(t)) \, \dot{q}(t) - D_1 L(q(t),\dot{q}(t)) = 0\,.
    \end{equation*}
    Whenever the matrix $D_{2 2} L(q,v)$ is regular, the Euler-Lagrange equation can be transformed into a SODE and the Lagrangian is said to be \emph{regular}.
    
    The map $\mathbb{F}L : T\mathcal{Q} \to T^*\mathcal{Q}$, locally defined by $(q,v) \mapsto (q, p = D_2 L(q,v))$, is called the fiber derivative and the variables $p$ receive the name of canonical momenta. Regularity implies that $\mathbb{F}L$ defines a local diffeomorphism. Whenever this can be extended to a global diffeomorphism, the Lagrangian is said to be \emph{hyperregular}.\\
    
    The cotangent bundle is a prototypical symplectic manifold, with symplectic form $\omega$.
    Let $(q^1,...,q^{\dim \mathcal{Q}},$ $p^1,...,p^{\dim \mathcal{Q}})$ define local Darboux coordinates, then
    \begin{equation*}
        \omega = dq^i \wedge dp_i\,.
    \end{equation*}
    Using the fiber derivative one can pullback the symplectic form to $T\mathcal{Q}$, generating what is known as the Poincar{\'e}-Cartan 2-form\footnote{This can be also constructed using only $L$ and the canonical machinery of the tangent bundle}, which provides a symplectic structure if $L$ is hyperregular. Locally,
    \begin{equation*}
        \omega_L = dq^i \wedge \left(\frac{\partial L}{\partial v^i}\right)\,.
    \end{equation*}

    With the aid of the Liouville field, $\triangle$, canonical in any tangent bundle, one can also define the Lagrangian energy
    \begin{equation*}
        E_{L} = \triangle L - L\,.
    \end{equation*}
    Locally, $\triangle = v^i \partial_{v^i}$, and
    \begin{equation*}
        E_{L}(q,v) = D_2 L(q,v_{q}) \, v - L(q,v)\,.
    \end{equation*}
    With it, the Euler-Lagrange equations can be rewritten in the geometric form
    \begin{equation}
        \label{eq:EL_geometric}
        \imath_{X_L} \omega_L = dE_L
    \end{equation}
    where $X_L$ is the Euler-Lagrange vector field and $\imath$ denotes the interior product of forms with vector fields. With this, one can prove that the Euler-Lagrange equation preserves the symplectic form, making its flow a symplectomorphism.

    Hamilton's principle can be generalized with the D'Alembert principle, which allows for the inclusion of external non-potential forces $f_L: T\mathcal{Q} \to T^*\mathcal{Q}$, such that $\tau_{\mathcal{Q}} = \pi_\mathcal{Q} \circ f_L$, leading to forced Euler-Lagrange equations of the form
    \begin{equation*}
        \frac{d}{dt}\left(D_2 L(q(t),\dot{q}(t))\right) - D_1 L(q(t),\dot{q}(t)) = f_L(q(t),\dot{q}(t))\,.
    \end{equation*}
    However, in this case, the symplecticity of the flow is lost.\\

    The inclusion of controls can be done at different levels. One can have controlled Lagrangians $L^\mathcal{E}: \mathcal{E} \to \mathbb{R}$, i.e. Lagrangians dependent on the controls. Instead, we will be concerned with force-controlled Lagrangian systems, where controls appear only inside forcing terms, i.e. $f_L^{\mathcal{E}}: \mathcal{E} \to T^*\mathcal{Q}$, such that $\tau_\mathcal{Q} \circ \pi^{\mathcal{E}} = \pi_\mathcal{Q} \circ f_L^\mathcal{E}$. Technically, these forces can be either potential or non-potential, but we will not make a distinction. Thus, a force-controlled Lagrangian system is defined by the quadruple $(\mathcal{Q},\mathcal{E},L,f_L^{\mathcal{E}})$, and its equations are of the form
    \begin{equation}
        \label{eq:force_controlled_EL}
        \frac{d}{dt}\left(D_2 L(q(t),\dot{q}(t))\right) - D_1 L(q(t),\dot{q}(t)) = f_L^{\mathcal{E}}(q(t),\dot{q}(t),u(t))\,.
    \end{equation}
    Clearly, if the Lagrangian is regular, then these equations can be rewritten explicitly as a controlled SODE, adopting the form
    \begin{equation*}
        \ddot{q}(t) = X_v(q(t),\dot{q}(t),u(t)),
    \end{equation*}
    with $X_v = (D_{2 2}L)^{-1} (D_1 L + f_L^{\mathcal{E}} - D_{1 2} L \, \dot{q})$.\\

    The case of controlled Lagrangians can be treated similarly. However, one must be aware of the fact that the regularity of the Lagrangian may depend upon the control, and furthermore, the resulting controlled SODE may potentially depend not only on $u$ but its first derivative.

    \begin{remark}
    \label{rmk:Lagrangian_SODE_num}
        The fact that Eq.~\eqref{eq:force_controlled_EL} may be rewritten as a controlled SODE does not mean that it is always advisable to do so. In fact, doing so can have numerical repercussions when integrating these equations. This will be revisited once more in Section~\ref{ssec:OptimalControlLagrangian}.
    \end{remark}


\section{Optimal control of second-order systems}
\label{sec:ocp_sodes}

The optimal control problem for second-order systems, under sufficient differentiability conditions can be treated exactly as in Eq.~\eqref{eq:generic_augmented_objective}. There, the augmented running cost featured inside the integral was a function on $T^*\mathcal{M} \oplus_{\mathcal{M}} \mathcal{E}$. In the second-order case, this naturally leads to an augmented running cost on $T^* T\mathcal{Q} \oplus_{T \mathcal{Q}} \mathcal{E}$. Assuming adapted local coordinates $(q,v,\lambda_q,\lambda_v)$ on $T^* T \mathcal{Q}$, 
and a generic controlled SODE $X = v \, \partial_q + f(q,v,u) \, \partial_v$, Eq.~\eqref{eq:generic_augmented_objective} transforms into
    \begin{align}
        \tilde{J}_1(x,\lambda,u) &= \phi(q(T),v(T)) \label{eq:functional_J1}\\
        &+ \int_{0}^T \left[ C(q(t),v(t),u(t)) + \left\langle (\lambda_q(t),\lambda_v(t)), (\dot{q}(t) - v(t),\dot{v}(t) - f(q(t),v(t),u(t))) \right\rangle_{T\mathcal{Q}} \right] dt\,.\nonumber
    \end{align}
We refer to this as the \emph{first-order version} of the optimal control problem since the controlled SODE appears as a first order system.

The second-order constraint, i.e. $\dot{q} = v$, may be added implicitly, leading to a new augmented cost function
    \begin{equation}
            \tilde{J}_2(y,u) = \phi(q(T),\dot{q}(T)) + \int_{0}^T \left[ C(q(t),\dot{q}(t),u(t)) + \kappa(t)^{\top} (\ddot{q}(t) - f(q(t),\dot{q}(t),u(t)))\right] dt\,. \label{eq:functional_J2}
    \end{equation}
Here, the curve $y = (q,\kappa)$ is a curve on $T^* \mathcal{Q}$. One must also understand $u$ to denote the curve defined on the fibers along the tangent lift of $q$, i.e. $(q(t),\dot{q}(t),u(t)) \in \mathcal{E}$. The remaining constraint is now to be interpreted as a function on $T^{(2)} \mathcal{Q} \oplus_{T\mathcal{Q}} \mathcal{E}$.

Taking variations we find that the necessary conditions for optimality provided by $\tilde{J}_1$ are
    \begin{itemize}
        \item (state dynamics) $\dot{q}(t) = v(t)$,\\
        \hphantom{(state dynamics)} $\dot{v}(t) = f(x(t),v(t),u(t))$,
        \item (adjoint dynamics) $\dot{\lambda}_q(t)^{\top} = D_1 C(q(t),v(t),u(t)) - \lambda_v(t)^{\top}\, D_1 f(q(t),v(t),u(t))$,\\
        \hphantom{(adjoint dynamics)} $\dot{\lambda}_v(t)^{\top} = D_2 C(q(t),v(t),u(t)) - \lambda_v(t)^{\top} \, D_2 f(q(t),v(t),u(t)) - \lambda_q(t)^{\top}$,
        \item (maximization) $0 = D_3 C(q(t),v(t),u(t)) - \lambda_v(t)^{\top} \, D_3 f(q(t),v(t),u(t))$,
        \item (transversality) $\lambda_q(T)^{\top} = -D_1 \phi(q(T),v(T))$,\\
	\hphantom{(transversality)} $\lambda_v(T)^{\top} = -D_2 \phi(q(T),v(T))$;
    \end{itemize}
while those provided by $\tilde{J}_2$ are
    \begin{itemize}
        \item (state dynamics) $\ddot{q}(t) = f(q(t),\dot{q}(t),u(t))$,
        \item (adjoint dynamics) $\ddot{\kappa}(t)^{\top} = \frac{d}{dt} \left[ D_2 C(q(t),\dot{q}(t),u(t)) - \kappa(t)^{\top} D_2 f(q(t),\dot{q}(t),u(t)) \right]$\\
        \hphantom{(adjoint dynamics) $\ddot{\kappa}(t)^{\top}$} $ -~ D_1 C(q(t),\dot{q}(t),u(t)) + \kappa(t)^{\top} \, D_1 f(q(t),\dot{q}(t),u(t))$,
        \item (maximization) $0 = D_3 C(q(t),\dot{q}(t),u(t)) - \kappa(t)^{\top} \, D_3 f(q(t),\dot{q}(t),u(t))$,
        \item (transversality) $\kappa(T)^{\top} = -D_2 \phi(q(T),\dot{q}(T))$,\\
	\hphantom{(transversality)} $\dot{\kappa}(T)^{\top} = D_1 \phi(q(T),\dot{q}(T))$\\
    \hphantom{(transversality) $\dot{\kappa}(T)^{\top}$} $+~D_2 C(q(T),\dot{q}(T),u(T)) + D_2 \phi(q(T),\dot{q}(T)) \, D_2 f(q(T),\dot{q}(T),u(T))$.
    \end{itemize}

These, lead to the following easy to check
\begin{theorem}
    The necessary optimality conditions provided by $\tilde{J}_1$ and $\tilde{J}_2$ are equivalent under the identification $\lambda_v = \kappa$.
\end{theorem}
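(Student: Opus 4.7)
The plan is to derive the optimality system of $\tilde{J}_2$ from that of $\tilde{J}_1$ by eliminating the auxiliary multiplier $\lambda_q$ and the auxiliary state $v$. The first step is immediate: from the state dynamics of $\tilde{J}_1$ we read off $v(t) = \dot{q}(t)$, so $\dot{v}(t) = \ddot{q}(t)$ and the two state equations collapse into the single second-order equation $\ddot{q}(t) = f(q(t),\dot{q}(t),u(t))$ appearing in $\tilde{J}_2$. Under this identification, the maximization condition is the same in both formulations after setting $\kappa = \lambda_v$, since only $D_3 C$ and $D_3 f$ appear and these do not involve $\lambda_q$.

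The heart of the argument is the adjoint system. The $\lambda_v$ equation from $\tilde{J}_1$ can be solved algebraically for $\lambda_q$, giving
\begin{equation*}
\lambda_q(t)^{\top} = D_2 C(q,\dot{q},u) - \lambda_v(t)^{\top}\, D_2 f(q,\dot{q},u) - \dot{\lambda}_v(t)^{\top}.
\end{equation*}
Differentiating this once in $t$ and comparing with the $\lambda_q$ equation in $\tilde{J}_1$, namely $\dot{\lambda}_q^{\top} = D_1 C - \lambda_v^{\top} D_1 f$, yields
\begin{equation*}
-\ddot{\lambda}_v(t)^{\top} + \tfrac{d}{dt}\!\left[D_2 C - \lambda_v^{\top} D_2 f\right] = D_1 C - \lambda_v^{\top} D_1 f,
\end{equation*}
which is exactly the second-order adjoint equation for $\kappa$ in $\tilde{J}_2$ under the identification $\kappa = \lambda_v$. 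The reverse direction is obtained by simply \emph{defining} $\lambda_q$ through the algebraic relation above and then reading the $\tilde{J}_1$ adjoint equations as consequences of the $\tilde{J}_2$ adjoint equation together with this definition.

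The last step is to match the boundary data. The transversality condition $\lambda_v(T)^{\top} = -D_2 \phi(q(T),\dot{q}(T))$ in $\tilde{J}_1$ gives $\kappa(T)^{\top} = -D_2 \phi$, the first transversality of $\tilde{J}_2$. Evaluating the algebraic relation for $\lambda_q$ at $t = T$ and substituting $\lambda_q(T)^{\top} = -D_1 \phi$ together with $\lambda_v(T)^{\top} = -D_2 \phi$ then produces precisely the second-order transversality
\begin{equation*}
\dot{\lambda}_v(T)^{\top} = D_1 \phi + D_2 C(q(T),\dot{q}(T),u(T)) + D_2 \phi\, D_2 f(q(T),\dot{q}(T),u(T)),
\end{equation*}
matching the expression given for $\dot{\kappa}(T)^{\top}$ in $\tilde{J}_2$.

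I expect no deep obstacle here; the content is really a computational verification together with a clean bookkeeping of which variable plays the role of an algebraic multiplier and which carries its own dynamics. The only mild subtlety is the transversality matching, where the extra term $D_2 C + D_2 \phi \, D_2 f$ in the $\tilde{J}_2$ boundary condition must be recognized as the boundary value of the same algebraic relation used to eliminate $\lambda_q$ in the interior; making this bookkeeping explicit is the step worth writing out carefully.
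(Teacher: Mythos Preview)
Your proposal is correct and follows exactly the approach the paper has in mind: the paper simply declares the theorem ``easy to check'' and, immediately after, records the very same algebraic relation $\lambda_q^{\top} = D_2 C(q,\dot{q},u) - \kappa^{\top} D_2 f(q,\dot{q},u) - \dot{\kappa}^{\top}$ that you use to eliminate $\lambda_q$. Your treatment of the adjoint equations and the transversality matching is precisely the intended computational verification, just written out in more detail than the paper provides.
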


Under the previous identification one also gets that $\lambda_q^{\top} = D_2 C(q,\dot{q},u) - \kappa^{\top} \, D_2 f(q,\dot{q},u) - \dot{\kappa}^{\top}$.\\

Consider again the augmented cost function $\tilde{J}_2$. 
Applying integration by parts once, we can remove the second derivative by transferring one differentiation to the adjoint variable $\kappa$. This results in the final version of the augmented cost function that we will consider:
    \begin{align}
		\tilde{J}_3(y,u) &= \phi(q(T),\dot{q}(T)) + \kappa(T)^{\top} \dot{q}(T) - \kappa(0)^{\top} \dot{q}(0) \label{eq:functional_J3}\\
		&+ \int_{0}^T \left[ C(q(t),\dot{q}(t),u(t)) - \dot{\kappa}(t)^{\top} \dot{q}(t) - \kappa(t)^{\top} f(q(t),\dot{q}(t),u(t)))\right] dt\,.\nonumber
    \end{align}

From the commutation of integration by parts and differentiation / variation under our smoothness assumptions, we get the following trivial
\begin{theorem}
    \label{thm:equivalence_J2_J3}
    The necessary optimality conditions provided by $\tilde{J}_2$ and $\tilde{J}_3$ are equivalent.
\end{theorem}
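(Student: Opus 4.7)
The plan is to exploit the fact that, under the stated $C^{\infty}$ smoothness assumptions, the difference between $\tilde{J}_2$ and $\tilde{J}_3$ is purely an application of integration by parts, so the two functionals agree pointwise on the space of admissible curves rather than merely having equivalent Euler--Lagrange equations. Concretely, for any smooth curve $(q,\kappa,u)$ satisfying the boundary constraints, I would apply integration by parts to the single offending term in $\tilde{J}_2$:
\begin{equation*}
    \int_{0}^{T} \kappa(t)^{\top} \ddot{q}(t)\, dt = \kappa(T)^{\top}\dot{q}(T) - \kappa(0)^{\top}\dot{q}(0) - \int_{0}^{T} \dot{\kappa}(t)^{\top} \dot{q}(t)\, dt.
\end{equation*}
Substituting this identity into \eqref{eq:functional_J2} reproduces \eqref{eq:functional_J3} term by term, and hence $\tilde{J}_2(y,u) = \tilde{J}_3(y,u)$ as functionals on the same admissible class.

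Once this pointwise identity is established, equivalence of the necessary optimality conditions follows immediately: any smooth variation $(\delta q,\delta\kappa,\delta u)$ produces the same value of the first variation for both functionals, so their critical points coincide. I would make this explicit by noting that variation and the integration-by-parts identity commute in the smooth category (the boundary contributions of the varied quantities are already accounted for in the boundary terms of $\tilde{J}_3$), and then simply remarking that the stationarity equations derived from $\tilde{J}_3$ must therefore match those listed for $\tilde{J}_2$.

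As an optional sanity check, I would perform the variation of $\tilde{J}_3$ directly: varying in $\kappa$ returns the state equation $\ddot{q} = f$, varying in $u$ returns the maximization condition, and varying in $q$ (after one further integration by parts) yields the same adjoint equation as $\tilde{J}_2$. The endpoint variations $\delta q(T)$ and $\delta\dot{q}(T)$ should reproduce the transversality conditions precisely because the boundary pieces $\kappa(T)^{\top}\dot{q}(T) - \kappa(0)^{\top}\dot{q}(0)$ were engineered to absorb exactly the boundary terms that the $\tilde{J}_2$ variation generates when $\ddot{q}$ is integrated by parts.

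The only genuinely delicate point, and hence the main obstacle to watch, is the implicit choice of function space: $\tilde{J}_2$ only makes sense for $q \in C^2$, whereas $\tilde{J}_3$ is well defined for $q \in C^1$ with $\kappa \in C^1$. The equivalence statement should therefore be understood on the common refinement, i.e. $C^2$ trajectories, which is consistent with the global $C^{\infty}$ assumption adopted earlier. I would conclude with a brief remark to that effect, so that no ambiguity remains about the admissible class on which the two formulations are being compared.
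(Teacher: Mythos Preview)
Your proposal is correct and follows essentially the same approach as the paper, which simply invokes the commutation of integration by parts and variation under the standing smoothness assumptions to declare the result trivial. Your additional remarks on function spaces and the optional direct verification are more detailed than what the paper provides, but the core argument is identical.
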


    Remarkably, this new augmented cost function affords us the following

    \begin{definition}
    \label{def:new_control_Lagrangian}
        Let $X$ be a controlled SODE and $C: \mathcal{E} \to \mathbb{R}$ a running cost function. Then, $\tilde{\mathcal{L}}^\mathcal{E}: TT^*\mathcal{Q} \oplus_{T\mathcal{Q}}^{\alpha} \mathcal{E} \to \mathbb{R}$,
        \begin{equation*}
            \tilde{\mathcal{L}}^\mathcal{E}(q,\kappa,v_q,v_{\kappa},u) = v_{\kappa}^{\top} v_q + \kappa^{\top} X_v(q,v_q,u) - C(q,v_q,u)\,.
        \end{equation*}
        is called a new control Lagrangian for the controlled SODE (also control-dependent new control Lagrangian).
    \end{definition}

\begin{example}\label{exp:vec_example}\hphantom{.}
        \begin{enumerate}
            \item \label{itm:vec_example_1} Let $\mathcal{Q} = \mathbb{R}$, $\mathcal{N} = \mathbb{R}$, $C(q,v_q,u) = k(q,v_q)$ and $X_v(q,v_q,u) = f_0(q,v_q) + f_3(q,v_q) u^3$, with $f_3(q,v_q) \neq 0$. The associated new control Lagrangian is
            \begin{equation*}
                \tilde{\mathcal{L}}^{\mathcal{E}}(q,\kappa,v_q,v_{\kappa},u) = v_{\kappa} v_q + \kappa [f_0(q,v_q) + f_3(q,v_q)u^3] - k(q,v_q)
            \end{equation*}
            \item \label{itm:vec_example_2} Let $(\mathcal{Q},\mathrm{g}_{\mathcal{Q}})$ Riemannian manifold. Let $(\mathcal{E},\pi^{\mathcal{E}},T\mathcal{Q},\rho)$  anchored vector bundle \cite{PopescuAnchored} and linear anchor $\rho: \mathcal{E} \to TT\mathcal{Q}$. Let $\mathrm{g}_{T\mathcal{Q}}$ be the Sasaki metric induced by $\mathrm{g}_{\mathcal{Q}}$ on $T\mathcal{Q}$ and $\mathrm{g}_{\mathcal{E}} = \rho^* \mathrm{g}_{T\mathcal{Q}}$. Finally, let $C(q,v_q,u) = \frac{1}{2} \mathrm{g}_{\mathcal{E}}(q,v_q)(u,u)$ and $X_v(q,v_q,u) = f_0(q,v_q) + f_1(q,v_q)(u)$, where the linear part in $u$ is induced by $((T\tau_{\mathcal{Q}} \circ \rho)(q,v_q,u))^v$ with $\cdot^v$ the vertical lift of vector fields. Then, the new Lagrangian is
            \begin{equation*}
                \tilde{\mathcal{L}}^{\mathcal{E}}(q,\kappa,v_q,v_{\kappa},u) = v_{\kappa}^{\top} v_q + \kappa^{\top} [f_0(q,v_q) + f_1(q,v_q)(u)] - \frac{1}{2}\mathrm{g}_{\mathcal{E}}(q,v_q)(u,u).
            \end{equation*}
        \end{enumerate}
    \end{example}

    Here, we have defined this new Lagrangian as minus the running cost in $\tilde{J}_3$ for later convenience. We leave the formal definition of $TT^*\mathcal{Q} \oplus_{T\mathcal{Q}}^{\alpha} \mathcal{E}$ for Section \ref{sec:tulcyjew_and_PMP} and provide the following abridged diagram that shows in which sense this is to be thought of as a sum of vector bundles.

    \begin{center}
        \begin{tikzcd}
        TT^*\mathcal{Q} \oplus_{T\mathcal{Q}}^{\alpha} \mathcal{E} \arrow[rr, "\mathrm{pr}_2^{\alpha}"] \arrow[dd, "\mathrm{pr}_1^{\alpha}"'] &  & \mathcal{E} \arrow[dd, "\pi^{\mathcal{E}}"] \\
                                                                                                                                              &  &                                             \\
        TT^*\mathcal{Q} \arrow[rr, "T\pi_{\mathcal{Q}}"']                                                                                     &  & T\mathcal{Q}                               
        \end{tikzcd}
    \end{center}

    This new control Lagrangian can be regarded as a $u$-parametrized family of Lagrangians on $TT^*\mathcal{Q}$. One of its most interesting features can be summarized in the following
    \begin{proposition}
        \label{prp:hyperregularity_new_Lagrangian}
        When $u$ is regarded as a parameter, $\tilde{\mathcal{L}}^{\mathcal{E}}$ is hyperregular for all $u$.
    \end{proposition}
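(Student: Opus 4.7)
The plan is to compute the fiber derivative $\mathbb{F}\tilde{\mathcal{L}}^{\mathcal{E}}_u : TT^*\mathcal{Q} \to T^*T^*\mathcal{Q}$ with $u$ held fixed as a parameter, and then to exhibit a global smooth inverse explicitly, so that hyperregularity (in the sense introduced earlier for Lagrangians on $T\mathcal{Q}$, now applied to Lagrangians on $T^*\mathcal{Q}$) follows directly. The setting is a bit unusual in that the ``configuration space'' for the parametrized Lagrangian is itself $T^*\mathcal{Q}$, so fibers of the tangent bundle $TT^*\mathcal{Q}$ are coordinatized by $(v_q, v_\kappa)$ over a base point $(q,\kappa)$, and the fiber derivative acts only on these fiber coordinates.

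First, I would compute the partial derivatives of $\tilde{\mathcal{L}}^{\mathcal{E}}$ with respect to the fiber coordinates, obtaining
\begin{align*}
    \frac{\partial \tilde{\mathcal{L}}^{\mathcal{E}}}{\partial v_\kappa}(q,\kappa,v_q,v_\kappa,u) &= v_q, \\
    \frac{\partial \tilde{\mathcal{L}}^{\mathcal{E}}}{\partial v_q}(q,\kappa,v_q,v_\kappa,u) &= v_\kappa + \kappa^{\top}\, D_2 X_v(q,v_q,u) - D_2 C(q,v_q,u).
\end{align*}
Denoting the conjugate momenta on $T^*T^*\mathcal{Q}$ by $(p_q, p_\kappa)$, the fiber derivative therefore reads
\begin{equation*}
    (q,\kappa,v_q,v_\kappa) \;\longmapsto\; \bigl(q,\kappa,\,p_q = v_\kappa + \kappa^{\top} D_2 X_v(q,v_q,u) - D_2 C(q,v_q,u),\; p_\kappa = v_q\bigr).
\end{equation*}

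Next, I would observe that this map admits an explicit smooth global inverse, obtained simply by reading the second component to recover $v_q = p_\kappa$ and then substituting into the first component to recover
\begin{equation*}
    v_\kappa = p_q - \kappa^{\top} D_2 X_v(q, p_\kappa, u) + D_2 C(q, p_\kappa, u).
\end{equation*}
Since both the map and its inverse are polynomial in the fiber variables with smooth coefficients in $(q,\kappa,u)$, they are globally defined diffeomorphisms between the fibers over each base point $(q,\kappa)$, and they depend smoothly on the parameters. This yields the required global diffeomorphism $TT^*\mathcal{Q} \to T^*T^*\mathcal{Q}$ for every fixed $u$, which is exactly the definition of hyperregularity.

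As a consistency check (and to make clear where the structure comes from), I would also note the shape of the Hessian of $\tilde{\mathcal{L}}^{\mathcal{E}}$ with respect to $(v_q, v_\kappa)$, which in block form is
\begin{equation*}
    \begin{pmatrix} \kappa^{\top} D_{22} X_v - D_{22} C & I \\ I & 0 \end{pmatrix},
\end{equation*}
whose determinant is $\pm 1$ regardless of $X_v$, $C$, $u$, $q$, $v_q$, or $\kappa$. This gives regularity automatically, so no hypothesis on the controlled SODE or the running cost is needed; the only real content of the statement is the global invertibility, which I expect to be the only step requiring any actual argument, and it is handled by the explicit inversion above. I do not anticipate a genuine obstacle: the key point is simply that the term $v_\kappa^{\top} v_q$ in $\tilde{\mathcal{L}}^{\mathcal{E}}$ is bilinear and nondegenerately couples the two halves of the fiber, so the rest of the Lagrangian (which depends only on $v_q$, $\kappa$, $q$, $u$) never obstructs invertibility.
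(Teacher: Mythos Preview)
Your proof is correct and, in fact, slightly more thorough than the paper's. The paper's argument consists solely of computing the Hessian of $\tilde{\mathcal{L}}^{\mathcal{E}}$ in the fiber variables, invoking a block-determinant identity (for square blocks with commuting bottom row) to conclude that its determinant equals $(-1)^{\dim\mathcal{Q}-1}$ independently of the point, and then asserting that this makes the fiber derivative a diffeomorphism. You cover this same Hessian computation as a consistency check, but your primary argument is to write down the fiber derivative explicitly and exhibit its global smooth inverse by the triangular structure $p_\kappa = v_q$, then $v_\kappa = p_q - \kappa^{\top} D_2 X_v(q,p_\kappa,u) + D_2 C(q,p_\kappa,u)$. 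This is a genuinely cleaner route to \emph{hyper}regularity: a nowhere-vanishing Hessian only gives a local diffeomorphism in general, and the paper is tacitly relying on the same triangular structure you make explicit to pass from local to global. Your argument buys a self-contained proof of global invertibility without needing the block-determinant lemma; the paper's buys a one-line regularity check at the cost of leaving the global step implicit.
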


    \begin{proof}
        To show this, let us first invoke the identity
        \begin{equation*}
            \det \left(
        \begin{array}{cc}
             \mathbf{A} & \mathbf{B} \\
             \mathbf{C} & \mathbf{D}
        \end{array}
        \right) = \det (\mathbf{A} \mathbf{D} - \mathbf{B} \mathbf{C}),
        \end{equation*}
        whenever $\mathbf{A}$, $\mathbf{B}$, $\mathbf{C}$ and $\mathbf{D}$ are square matrices and $\mathbf{C}$ and $\mathbf{D}$ commute \cite{Silvester00}. With this, it is trivial to check that the determinant of the matrix
    \begin{equation*}
        \left(
        \begin{array}{cc}
             D_{3 3} \tilde{\mathcal{L}}^{\mathcal{E}} & D_{4 3} \tilde{\mathcal{L}}^{\mathcal{E}} \\
             D_{3 4} \tilde{\mathcal{L}}^{\mathcal{E}} & D_{4 4} \tilde{\mathcal{L}}^{\mathcal{E}} 
        \end{array}
        \right) =  \left(
        \begin{array}{cc}
             D_{3 3} \tilde{\mathcal{L}}^{\mathcal{E}} & I_{\dim \mathcal{Q}} \\
             I_{\dim \mathcal{Q}} & 0_{\dim \mathcal{Q}} 
        \end{array}
        \right)
    \end{equation*}
    is point-independent and of value $(-1)^{\dim \mathcal{Q}-1}$.
    This shows that the associated fiber derivative is a diffeomorphism.
    \end{proof}

    \begin{remark}
        Do note that the former result is completely independent of the actuation type of the problem. This already hints  towards a certain canonicity of these transformations, as we will see in Section~\ref{sec:tulcyjew_and_PMP}.
    \end{remark}


    One can also regard $\tilde{\mathcal{L}}^{\mathcal{E}}$ locally as a singular Lagrangian on $T(T^*\mathcal{Q} \times \mathcal{N})$, with $u$ playing the role of a Lagrange multiplier of sorts enforcing the maximization condition. Owed to this, it is trivial to check that the necessary optimality conditions for $\tilde{J}_3$ are given by
        \begin{itemize}
            \item (adjoint dynamics) $0 = D_1 \tilde{\mathcal{L}}^\mathcal{E}(q(t),\kappa(t),\dot{q}(t),\dot{\kappa}(t),u(t)) - \frac{d}{dt}\left( D_3 \tilde{\mathcal{L}}^\mathcal{E}(q(t),\kappa(t),\dot{q}(t),\dot{\kappa}(t),u(t)) \right)$
            \item (state dynamics) $0 = D_2 \tilde{\mathcal{L}}^\mathcal{E}(q(t),\kappa(t),\dot{q}(t),\dot{\kappa}(t),u(t)) - \frac{d}{dt}\left( D_4 \tilde{\mathcal{L}}^\mathcal{E}(q(t),\kappa(t),\dot{q}(t),\dot{\kappa}(t),u(t)) \right)$,
        \item (maximization) $0 = D_5 \tilde{\mathcal{L}}^\mathcal{E}(q(t),\kappa(t),\dot{q}(t),\dot{\kappa}(t),u(t))$\,,
    \end{itemize}
together with the transversality conditions for $\tilde{J}_2$.

\begin{definition}
    Assume $\bar{u}$ is such that the maximization condition is satisfied. Then,
    $\tilde{\mathcal{L}}(q,\kappa,v_q,v_{\kappa}) := \tilde{\mathcal{L}}^{\mathcal{E}}(q,\kappa,v_q,v_{\kappa},\bar{u})$ is called a (locally) optimal new control Lagrangian for the controlled SODE (also control-independent new control Lagrangian).
\end{definition}

Since by assumption $D_5 \tilde{\mathcal{L}}^{\mathcal{E}}(q,\kappa,v_q,v_{\kappa},\bar{u}) = 0$ we have that
\begin{align*}
    D_i \tilde{\mathcal{L}}(q,\kappa,v_q,v_{\kappa}) &= D_i \tilde{\mathcal{L}}^{\mathcal{E}}(q,\kappa,v_q,v_{\kappa},\bar{u}) + D_5 \tilde{\mathcal{L}}^{\mathcal{E}}(q,\kappa,v_q,v_{\kappa},\bar{u}) D_i \bar{u}(q,\kappa,v_q,v_{\kappa})\\
    &= D_i \tilde{\mathcal{L}}^{\mathcal{E}}(q,\kappa,v_q,v_{\kappa},\bar{u}), \quad i = 1, ..., 4.
\end{align*}
Consequently, the Euler-Lagrange equations of $\tilde{\mathcal{L}}$ are equivalent to those of $\tilde{\mathcal{L}}^{\mathcal{E}}$ after substitution of $\bar{u}$.
\begin{example}[Cont.~of Example \ref{exp:vec_example}]
    \label{exp:vec_example_cont}
    The maximization condition tells us that 
        \begin{enumerate}
            \item \label{itm:vec_example_cont_1} 
            $3 \kappa f_3(q, v_q)\bar{u}^2 = 0$, so that $\bar{u} = 0$ away from $\kappa = 0$ and
            \begin{equation*}
                \tilde{\mathcal{L}}(q,\kappa,v_q,v_{\kappa}) = v_{\kappa} v_q + \kappa f_0(q,v_q) - k(q,v_q).
            \end{equation*}
            For $\kappa = 0$ the problem leads to singular controls.
            \item \label{itm:vec_example_cont_2} 
            $\kappa^{\top} f_1(q,v_q) - \mathrm{g}_{\mathcal{E}}(q,v_q)(\bar{u},\cdot) = 0$,
            so $\bar{u} = \mathrm{g}_{\mathcal{E}}^{-1}(q,v_q)(f_1(q,v_q)^{\top}(\kappa))$ and
            \begin{equation*}
                \tilde{\mathcal{L}}(q,\kappa,v_q,v_{\kappa}) = v_{\kappa}^{\top} v_q + \kappa^{\top} f_0(q,v_q) + \frac{1}{2}\mathrm{g}_{\mathcal{E}}^{-1}(q,v_q)(f_1(q,v_q)^{\top}(\kappa),f_1(q,v_q)^{\top}(\kappa)).
            \end{equation*}
        \end{enumerate}
    \end{example}
\begin{remark}
    The setting in \cite{Leyendecker2024new} is a particular case of the one presented here. There, only running costs quadratic in the control and affine-controlled SODEs were considered. In particular, it was assumed that the configuration space of the system was a Riemannian manifold $(\mathcal{Q}, \mathrm{g}_{\mathcal{Q}})$, and that the control space was an anchored vector bundle $(\mathcal{E}, \pi^{\mathcal{E}}, \mathcal{Q},\rho)$ with injective linear anchor $\rho: \mathcal{E} \to T\mathcal{Q}$. Notice that, in contrast with our general setting, $\mathcal{E}$ was a vector bundle over $\mathcal{Q}$ and not $T\mathcal{Q}$. This meant that locally the anchor was represented by $\rho(q)$. Moreover, this structure induced a Riemannian structure on $\mathcal{E}$, namely $(\mathcal{E}, \mathrm{g} = \rho^* \mathrm{g}_{\mathcal{Q}})$. These assumptions implied that the OCP was always algebraically hyperregular.
    \begin{itemize}
        \item  Quadratic cost: $C(q,v,u) = \frac{1}{2} \mathrm{g}(q)(u,u)$.
        \item Affine-controlled SODE: $\ddot{q}^i(t) = f^i(q(t),\dot{q}(t)) + \rho^i_{\alpha}(q(t)) u^{\alpha}(t)$, $i = 1, ..., \dim \mathcal{Q}; \,\alpha = 1,..., \dim \mathcal{N}$.
    \end{itemize}
    These resulted in the new control Lagrangian
    \begin{equation*}
        \tilde{\mathcal{L}}^{\mathcal{E}}(q,\kappa,v_q,v_{\kappa},u) = v_{\kappa}^{\top} v_{q} + \kappa^{\top} [ f(q,v_q) + \rho(q)u ] - \frac{1}{2} \mathrm{g}(q)(u,u)\,.
    \end{equation*}
    The maximization condition then acquired the simple form
    \begin{equation*}
        \mathrm{g}(q)\,u = \kappa^{\top} \rho(q),
    \end{equation*}
    which, since $\mathrm{g}$ is necessarily regular, meant that $u$ could be solved for explicitly. Substituting it in $\tilde{\mathcal{L}}^{\mathcal{E}}$ led to
    \begin{equation*}
        \tilde{\mathcal{L}}(q,\kappa,v_q,v_{\kappa}) = v_{\kappa}^{\top} v_{q} + \kappa^{\top} f(q,v_q) + \frac{1}{2} b(q)(\kappa, \kappa)
    \end{equation*}
    where $b$ is a possibly degenerate quadratic form given by $\mathrm{g}$ and $\rho^{\top}$.
\end{remark}

\subsection{Higher-order Lagrangian formulation}
\label{sec:high_order_lagrangians}
An alternative to this new Lagrangian approach consists in reformulating the problem as a higher-order Lagrangian one \cite{Colombo2010,Colombo2016}. In particular, for second-order systems, the approach can be summarized as follows.

Consider a controlled SODE, which may be either fully or underactuated. By the implicit function theorem it is possible to find $u: \mathcal{W} \subset T^{(2)}\mathcal{Q} \to \mathcal{E}$, such that
\begin{equation*}
    \ddot{q} = X_v(q,\dot{q},u(q,\dot{q},\ddot{q}))\,.
\end{equation*}
In the fully actuated case $\mathcal{W} \equiv T^{(2)}\mathcal{Q}$. In the underactuated case, let us assume for simplicity that one can find coordinates in $T^{(2)}\mathcal{Q}$, $(q^b,q^{\beta},v^b,v^{\beta},a^b,a^{\beta})$ with $b = 1,..., \dim \mathcal{N}$, $\beta = \dim \mathcal{N} + 1,..., \dim \mathcal{Q}$, such that the controlled SODE reduces to
\begin{subequations}
    \label{eq:actuation_adapted_coords}
    \begin{align}
    \ddot{q}^b &= X_v^b(q,\dot{q},u)\,,\\
    \ddot{q}^{\beta} &= X_v^{\beta}(q,\dot{q})\,,
    \end{align}
\end{subequations}
with $\mathrm{rank} \, D_3 (X_v^1, ..., X_v^{\dim \mathcal{N}}) = \dim \mathcal{N}$. Then the first $\dim \mathcal{N}$ equations define $\mathcal{W}$, and we have $u(q^i,v^i,a^b)$ with $i = 1,...,\dim\mathcal{Q}$.
With this, one can define an associated second-order Lagrangian, $\mathscr{L}: T^{(2)}\mathcal{Q} \to \mathbb{R}$ \cite{deLeonRodrigues85},
\begin{equation}
    \label{eq:equiv_second_order_lagrangian}
    \mathscr{L}(q,v,a) = C(q,v,u(q,v,a))\,,
\end{equation}
to the OCP.\\

Similar to the first-order case, a second-order Lagrangian defines a fiber derivative, $\mathbb{F}\mathscr{L}: T^{(3)}\mathcal{Q} \to T^*T\mathcal{Q}$, locally defined by $(q,v,a,\jmath) \mapsto (q,v,p_q = D_2 \mathscr{L}(q,v,a) - (\frac{d}{dt} D_3 \mathscr{L})(q,v,a,
\jmath), p_v = D_3 \mathscr{L}(q,v,a))$. With this, the definition of regularity and hyperregularity of second-order Lagrangians is the same as for first-order ones. For regularity, it suffices to check that the matrix $D_{33}\mathscr{L}(q,v,a)$ is regular.

\textbf{@All}, if anyone of you could double-check this proof, it would be great.


\begin{theorem}
    \label{thm:regularity_OCP_lagrangian}
    The Lagrangian \eqref{eq:equiv_second_order_lagrangian} is regular if and only if 
    both
    \begin{enumerate}
        \item the OCP is algebraically superregular, and
        \item the controlled SODE is fully actuated and regular, i.e. its derivative is non-zero.
    \end{enumerate}
    Moreover, if the OCP is algebraically hyperregular, then the Lagrangian \eqref{eq:equiv_second_order_lagrangian} will be hyperregular.
\end{theorem}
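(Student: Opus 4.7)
The plan is to split the biconditional into a necessity direction (assuming $\mathscr{L}$ is regular to deduce (i) and (ii)) and a sufficiency direction (assuming both to deduce regularity), with the ``moreover'' clause tackled separately. The whole argument rests on a direct chain-rule computation of $D_{33}\mathscr{L}$, made possible by the implicit function theorem applied to $a = X_v(q,v,u)$, so that on a suitable neighborhood there exists a smooth $u = g(q,v,a)$ with $D_3 g = (D_3 X_v)^{-1}$.

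For the necessity of (ii), if the SODE were underactuated I would pass to the adapted coordinates of \eqref{eq:actuation_adapted_coords}: the control $u$ is then a function of the $a^b$ components only, so $\mathscr{L}$ does not depend on the $a^{\beta}$ directions and $D_{33}\mathscr{L}$ carries an identically zero sub-block, contradicting regularity. Similarly, if $D_3 X_v$ drops rank, the implicit function theorem does not deliver a smooth local inverse, so $\mathscr{L}$ is not even well-defined on $T^{(2)}\mathcal{Q}$ as a smooth second-order Lagrangian and the question of its fiber derivative being a local diffeomorphism is moot.

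For the sufficiency direction and the necessity of (i), differentiating $\mathscr{L}(q,v,a) = C(q,v,g(q,v,a))$ twice in $a$, and using a further implicit differentiation of $a = X_v(q,v,g(q,v,a))$ to express $D_{33}g$ in terms of $(D_3 X_v)^{-1}$ and $D_{33}X_v$, one arrives at an identity of the schematic form
\begin{equation*}
D_{33}\mathscr{L} \;=\; (D_3 X_v)^{-\top}\bigl[\,D_{33} C \;-\; \Lambda \cdot D_{33}X_v\,\bigr]\,(D_3 X_v)^{-1},
\end{equation*}
where $\Lambda := D_3 \mathscr{L} = D_3 C\cdot (D_3 X_v)^{-1}$ coincides with the costate $\lambda_v$ along critical curves. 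Because congruence by the invertible $(D_3 X_v)^{-1}$ preserves rank, regularity of $\mathscr{L}$ is equivalent to non-singularity of the bracketed matrix, which in turn is controlled by $D_{33}C$; this yields both the necessity of algebraic super-regularity and the sufficiency once (ii) holds.

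The main obstacle is the correction term $\Lambda \cdot D_{33}X_v$, which in full generality can perturb the rank of $D_{33}C$: it vanishes identically when $X_v$ is affine in $u$ (as in Example~\ref{exp:actuation}), in which case the biconditional is immediate, but otherwise one must either impose additional hypotheses on $X_v$ or interpret the statement along the critical locus, where the bracket reduces to (minus) the generalized Legendre–Clebsch Hessian $D_{33}\mathcal{H}_{-1}$; pinning down the precise framing is exactly the point the manuscript flags for double-checking. For the hyperregularity addendum, algebraic hyperregularity upgrades $\mathbb{F}C$ to a \emph{global} diffeomorphism, and full actuation with everywhere invertible $D_3 X_v$ supplies a global fibre diffeomorphism $u \leftrightarrow a$; composing these with the already-local diffeomorphism $\mathbb{F}\mathscr{L}$ produced by the sufficiency part promotes $\mathbb{F}\mathscr{L}$ to a global diffeomorphism via standard monodromy/covering arguments.
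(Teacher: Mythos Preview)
Your approach differs substantively from the paper's. The paper argues geometrically: full actuation furnishes a diffeomorphism $\mathcal{E}\to T^{(2)}\mathcal{Q}$ over $T\mathcal{Q}$, while algebraic regularity of the maximization condition yields a local submersion $T^*T\mathcal{Q}\to\mathcal{E}$ which (after quotienting out $\lambda_q$) becomes a local diffeomorphism $T\mathcal{Q}\oplus_\mathcal{Q} T^*\mathcal{Q}\to\mathcal{E}$; composing these identifies $\lambda_v$ with $D_3\mathscr{L}$ and then bootstraps to the full fiber derivative $\mathbb{F}\mathscr{L}:T^{(3)}\mathcal{Q}\to T^*T\mathcal{Q}$. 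Your route is instead a direct chain-rule computation of $D_{33}\mathscr{L}$, yielding the congruence formula with the bracket $D_{33}C-\Lambda\cdot D_{33}X_v$. The paper's argument is cleaner on the bundle level and makes the role of the various spaces explicit; yours is more elementary and, crucially, makes completely transparent \emph{which} Hessian actually controls the regularity of $\mathscr{L}$.

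On that last point you have put your finger precisely on the difficulty the authors themselves flag. Your bracket $D_{33}C-\Lambda\cdot D_{33}X_v$ is (up to sign) $D_{33}\mathcal{H}_{-1}$, whose invertibility is the paper's notion of algebraic \emph{regularity}, not \emph{super}regularity (the latter being invertibility of $D_{33}C$ alone). In the affine-in-$u$ case these coincide and the biconditional is clean, but for general $X_v$ they do not, and neither your argument nor the paper's closes this gap: the paper invokes superregularity at the step ``it is easy to show that $D_3\mathscr{L}=\lambda_v$,'' yet that identity follows from full actuation alone, so the role of superregularity (as opposed to mere regularity) in the forward implication is not made precise. Your explicit formula is the right tool to resolve this; the honest statement appears to be that regularity of $\mathscr{L}$ is equivalent to algebraic regularity of the OCP together with (ii), with superregularity entering only as a sufficient condition independent of the costate. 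Your handling of the hyperregular addendum via globalizing the two fibrewise diffeomorphisms is in the same spirit as the paper's and is adequate.
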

\begin{proof}
    If the controlled SODE is fully actuated, then, under our assumptions on $X$ it establishes a bijection between $\mathcal{E}$ and $T^{(2)}\mathcal{Q}$ over $T\mathcal{Q}$. Moreover, when $X$ is regular, by the inverse function theorem, its inverse is also continuously differentiable and thus it establishes a diffeomorphism.\\\\
    If the OCP is algebraically regular, then there exist local surjective submersions from $T^*T\mathcal{Q}$ to $\mathcal{E}$ over the identity around every point.
    Further, the quotient space induced by such maps is a manifold (see \cite{MargalefOuterelo92}, Chapter 4.3.).
    In the case at hand, since only $\lambda_v$ (or equivalently $\kappa$) 
    appear in the maximization condition, this manifold is isomorphic to a submanifold of $T\mathcal{Q} \oplus_{\mathcal{Q}} T^*\mathcal{Q}$. Now, since the induced map on the quotient is both surjective and injective by construction it is a bijection and also both an immersion and a submersion. Thus, in the case of full actuation, by a dimension counting argument these map generate local diffeomorphisms from $T\mathcal{Q} \oplus_{\mathcal{Q}} T^*\mathcal{Q}$ to $\mathcal{E}$.\\\\
    With these two local diffeomorphic relations between $T\mathcal{Q} \oplus_{\mathcal{Q}} T^{*}\mathcal{Q}$ and $\mathcal{E}$ and $\mathcal{E}$ and $T^{(2)}\mathcal{Q}$, we can finally establish a local diffeomorphic relation between $T\mathcal{Q} \oplus_{\mathcal{Q}} T^{*}\mathcal{Q}$ and $T^{(2)}\mathcal{Q}$.\\\\
    Now, regularity of the OCP does not provide warranties on the properties of the running cost $C$ itself. For this, we need superregularity. Under this assumption, it is easy to show that $D_3 \mathscr{L} = \lambda_v$. Provided the functions involved are at least $C^2$, the complete fiber derivative extends to a local diffeomorphism between $T^* T\mathcal{Q}$ and $T^{(3)} \mathcal{Q}$, which is precisely the condition that $\mathscr{L}$ \eqref{eq:equiv_second_order_lagrangian} be regular. The hyperregular case follows by changing all local considerations into global ones.\\\\
    Finally, let us consider the reverse implication. If the original OCP is known, then by the uniqueness of the constructions on the direct implication, there is no other way to connect $\mathscr{L}$ with $C$ and $f$. If the OCP is unknown, a given second-order Lagrangian system 
    trivially provides us with the fully-actuated controlled SODE $X = v \partial_q + a \partial_v$. Further, $\mathscr{L}$ itself may be considered as running cost, and thus, the resulting OCP is algebraically superregular or hyperregular whenever $\mathscr{L}$ is, respectively, regular or hyperregular.
\end{proof}
\begin{remark}
    The previous result can be extended to $n$-th-order Lagrangians by providing a suitable definition of full actuation. For instance, in the first-order case, full actuation can be extended by requiring the ODE $\dot{x} = f(x,u)$ provides a bijection between $\mathcal{E}$ and $T\mathcal{Q}$. By superregularity of the OCP, we also have a local diffeomorphism between $T\mathcal{Q}$ and $\mathcal{E}$. 
\end{remark}
\begin{example}
    Let us consider a simple example. Let $C$ be quadratic in $u$ and $X_v$ be linear in $u$. In this case the maximization condition tells us that locally
    \begin{equation*}
        D_3 C(q,v,u) = \lambda_v^\top D_3 X_v(q,v,u) \Rightarrow F(q,v) u = \lambda_v^\top G(q,v).
    \end{equation*}
    If the OCP is algebraically superregular, then $F(q,v)$ is regular and $u(q,v,\lambda_v) = F(q,v)^{-1} \lambda_v^\top G(q,v)$, providing the relation from $T\mathcal{Q} \oplus T^*\mathcal{Q}$ to $\mathcal{E}$. In the case of full actuation, $G$ must be a regular square matrix and thus the previous relation is a local diffeomorphism. Now, full actuation gives us $a = X_v(q,v,u)$, and the relationship between $\mathcal{E}$ and $T^{(2)}\mathcal{Q}$ being diffeomorphic, means that we can write $u(q,v,a)$, as mentioned at the beginning of Section \ref{sec:high_order_lagrangians}.
    Now, from \eqref{eq:equiv_second_order_lagrangian}, and the definition of $a$, it is easy to check that
    \begin{equation*}
        D_3 \mathscr{L}(q,v,a) = \lambda_v^T\,.
    \end{equation*}
    From the theorem, this relationship allows us to find $a = a(q,v,\lambda_v) = X_v(q,v,u(q,v,\lambda_v))$, and thus the Lagrangian needs to be regular. In particular, $D_{3 3}L(q,v,a)$ must be regular, so
    \begin{align*}
        D_2 \mathscr{L}(q,v,a) - \frac{d}{dt} D_3 \mathscr{L}(q,v,a) &= D_2 \mathscr{L}(q,v,a) - D_{3 1} \mathscr{L}(q,v,a) v - D_{3 2} \mathscr{L}(q,v,a) a - D_{3 3} \mathscr{L}(q,v,a) \jmath\\
        &= D_2 C(q,v,u(q,v,a)) - \lambda_v^{\top} D_2 X_v(q,v,u(q,v,a)) - \dot{\lambda}_v^{\top} = \lambda_q^\top,
    \end{align*}
    which shows that $\mathbb{F}\mathscr{L}$ is a local diffeomorphism from $T^{(3)}\mathcal{Q} \to T^*T\mathcal{Q}$.
\end{example}
\begin{example}[Cont.~of Example \ref{exp:vec_example}]
    \label{exp:vec_example_high}\hphantom{.}
        \begin{enumerate}
            \item \label{itm:vec_example_high_1} $C$ does not depend on $u$, so no associated high-order Lagrangian exists.
            \item \label{itm:vec_example_high_2} Depending on the dimension of $\mathcal{N}$ and the properties of the anchor our problem will be under or fully actuated. If $\dim \mathcal{N} = \dim \mathcal{Q}$ and the anchor is injective, then $f_1(q,v\equiv v_q)$ is a regular matrix and
            \begin{equation*}
                u = (f_1 (q,v))^{-1}(a - f_0(q,v))
            \end{equation*}
            so that
            \begin{equation*}
                \mathscr{L}(q,v,a) = \frac{1}{2} \mathrm{g}_{\mathcal{E}}(q,v)((f_1 (q,v))^{-1}(a - f_0(q,v)),(f_1 (q,v))^{-1}(a - f_0(q,v)))
            \end{equation*}
            If $\dim \mathcal{N} < \dim \mathcal{Q}$, one may work as in \eqref{eq:actuation_adapted_coords}. However, the resulting Lagrangian will not be regular.
        \end{enumerate}
\end{example}


Provided the Lagrangian is at least regular, the OCP can be formulated as the Lagrangian problem
\begin{equation*}
    \mathcal{S}(q) = \phi(q(T),\dot{q}(T)) + \int_{0}^T \mathscr{L}(q(t),\dot{q}(t),\ddot{q}(t))\, dt
\end{equation*}

In any other case, the resulting Lagrangian problem will be singular, making the new Lagrangian approach more advantageous when dealing with a simply regular OCP or a super/hyperregular OCP for an underactuated controlled SODE. In the former, a second-order Lagrangian may not even exist and in the latter, the second-order Lagrangian approach requires us to work in a constrained setting. In particular, for the latter, one can define an augmented second-order Lagrangian \cite{Colombo2010}
\begin{equation*}
    \hat{\mathscr{L}}(q,v,a,\lambda) = \mathscr{L}(q,v,a) + \lambda_{\beta} (a^{\beta} - X_v^{\beta}(q,v))\,,\\
\end{equation*}
where the non-directly actuated equations have been attached as constraints. This is necessary since otherwise the system would not have information about this part of the dynamics.


\subsection{New control Hamiltonian for second-order systems}

    Proposition~\ref{prp:hyperregularity_new_Lagrangian} indicates that, since $\tilde{\mathcal{L}}^{\mathcal{E}}$ is always hyperregular, there \emph{always} exists a dual Hamiltonian formulation. We refer to this as a new control Hamiltonian for a controlled SODE. Indeed, consider the fiber derivative induced by $\tilde{\mathcal{L}}^{\mathcal{E}}$, namely, $\mathbb{F}\tilde{\mathcal{L}}^{\mathcal{E}}: TT^*\mathcal{Q} \oplus_{T\mathcal{Q}}^{\alpha} \mathcal{E} \to T^*T^*\mathcal{Q} \oplus_{T\mathcal{Q}}^{\beta} \mathcal{E}$,
    \begin{equation*}
        \mathbb{F}\tilde{\mathcal{L}}^{\mathcal{E}}( q,  \kappa, v_q, v_{\kappa}, u) = \left(q, \kappa, p_{q} = D_{3}\tilde{\mathcal{L}}^{\mathcal{E}}( q,  \kappa, v_q, v_{\kappa}, u), p_{\kappa} = D_{4}\tilde{\mathcal{L}}^{\mathcal{E}}( q,  \kappa, v_q, v_{\kappa}, u), u\right)\,.
    \end{equation*}

    More explicitly, using matrix notation
    \begin{align*}
        p_q^{\top} &= v_{\kappa}^{\top} + \kappa^{\top} D_2 X_v(q,v_q,u) - D_2 C(q,v_q,u)\,,\\
        p_{\kappa}^{\top} &= v_q^{\top}\,.
    \end{align*}

    We leave the formal definition of $T^*T^*\mathcal{Q} \oplus_{T\mathcal{Q}}^{\beta} \mathcal{E}$ for Section \ref{sec:tulcyjew_and_PMP}.\\

    We may now define the energy function associated to our new control Lagrangian,
    \begin{equation*}
        E_{\tilde{\mathcal{L}}^{\mathcal{E}}} = \triangle \tilde{\mathcal{L}}^{\mathcal{E}} - \tilde{\mathcal{L}}^{\mathcal{E}}
    \end{equation*}
    where $\triangle$ denotes the Liouville field brought from $TT^*\mathcal{Q}$ to $TT^*\mathcal{Q} \oplus_{TQ}^{\alpha} \mathcal{E}$. Locally, $\triangle = v_q^i \partial_{v_q^i} + v_{\kappa}^i \partial_{v_{\kappa}^i}$ and
    \begin{equation*}
        E_{\tilde{\mathcal{L}}^{\mathcal{E}}}(q,\kappa,v_{q},v_{\kappa},u) = D_3 \tilde{\mathcal{L}}^{\mathcal{E}}(q,\kappa,v_{q},v_{\kappa},u) \, v_q + D_4 \tilde{\mathcal{L}}^{\mathcal{E}}(q,\kappa,v_{q},v_{\kappa},u) \, v_{\kappa} - \tilde{\mathcal{L}}^{\mathcal{E}}(q,\kappa,v_{q},v_{\kappa},u)\,.
    \end{equation*}

    It is trivial to show that when restricted to the optimal new control Lagrangian, i.e. $E_{\tilde{\mathcal{L}}}(q,\kappa,v_{q},v_{\kappa}) := E_{\tilde{\mathcal{L}}^{\mathcal{E}}}(q,\kappa,v_{q},v_{\kappa},\bar{u})$ with $\bar{u}$ satisfying the maximization condition, this energy function is a conserved quantity of the flow induced by the state-adjoint dynamics.\\

    With this, we can provide the following
    \begin{definition}
        Let $\tilde{\mathcal{L}}^{\mathcal{E}}$ be a new control Lagrangian for a controlled SODE $X$ with running cost $C$. Then, the Legendre transform of this new control Lagrangian, i.e.
        $\tilde{\mathcal{H}}^{\mathcal{E}} = (E_{\tilde{\mathcal{L}}^{\mathcal{E}}} \circ \mathbb{F}\tilde{\mathcal{L}}^{\mathcal{E}}) : T^*T^*\mathcal{Q} \oplus_{T\mathcal{Q}}^{\beta} \mathcal{E} \to \mathbb{R}$,
        \begin{equation} \label{eq:new.control.Hamiltonian}
             \tilde{\mathcal{H}}^{\mathcal{E}}(q,\kappa,p_q,p_{\kappa},u) = p_{q}^{\top} p_{\kappa} - \kappa^{\top} X_v(q,p_{\kappa},u) + C(q,p_{\kappa},u),
        \end{equation}
        is called a new control Hamiltonian for the controlled SODE (also control-dependent new control Hamiltonian).
    \end{definition}

    The necessary conditions for optimality transform into Hamilton's equation once more, as with the standard control Hamiltonian. However, in contrast to those or the new Lagrangian case, state and adjoint dynamics appear mixed together
    \begin{itemize}
        \item (state-adjoint dynamics) $\dot{q}(t) = D_3 \tilde{\mathcal{H}}^{\mathcal{E}}(q(t),\kappa(t),p_q(t),p_{\kappa}(t),u(t))$,\\
        \hphantom{(state-adjoint dynamics)} $\dot{\kappa}(t) = D_4 \tilde{\mathcal{H}}^{\mathcal{E}}(q(t),\kappa(t),p_q(t),p_{\kappa}(t),u(t))$,\\
        \hphantom{(state-adjoint dynamics)} $\dot{p}_{q}(t) = -D_1 \tilde{\mathcal{H}}^{\mathcal{E}}(q(t),\kappa(t),p_q(t),p_{\kappa}(t),u(t))$,\\
        \hphantom{(state-adjoint dynamics)} $\dot{p}_{\kappa}(t) = -D_2 \tilde{\mathcal{H}}^{\mathcal{E}}(q(t),\kappa(t),p_q(t),p_{\kappa}(t),u(t))$,
        \item (maximization) $0 = D_5 \tilde{\mathcal{H}}^{\mathcal{E}}(q(t),\kappa(t),p_q(t),p_{\kappa}(t),u(t))$\,,
        \item (transversality) $\kappa(T)^{\top} = -D_2 \phi(q(T),p_{\kappa}(T))$,\\
	\hphantom{(transversality)} $p_q(T) = D_1 \phi(q(T),p_{\kappa}(T))$.
    \end{itemize}

\begin{remark}
    Using Pontryagin's control Hamiltonian with $\lambda_0 = -1$,
    \begin{equation*}
        \mathcal{H}_{-1}(q,v,\lambda_q,\lambda_v,u) =  \left\langle (\lambda_q,\lambda_v), (v, X_v(q,v,u)) \right\rangle_{T\mathcal{Q}} - C(q,v,u),
    \end{equation*}
    the state and adjoint dynamics of $\tilde{\mathcal{J}}_1$ adopt the form
    \begin{itemize}
        \item (state dynamics) $\dot{q}(t) = D_3 \mathcal{H}_{-1}(q(t),v(t),\lambda_q(t),\lambda_v(t),u(t))$,\\
        \hphantom{(state dynamics)} $\dot{v}(t) = D_4 \mathcal{H}_{-1}(q(t),v(t),\lambda_q(t),\lambda_v(t),u(t))$,
        \item (adjoint dynamics) $\dot{\lambda}_{q}(t) = -D_1 \mathcal{H}_{-1}(q(t),v(t),\lambda_q(t),\lambda_v(t),u(t))$,\\
        \hphantom{(adjoint dynamics)} $\dot{\lambda}_v(t) = -D_2 \mathcal{H}_{-1}(q(t),v(t),\lambda_q(t),\lambda_v(t),u(t))$.
    \end{itemize}
    While structurally identical to the ones from $\tilde{\mathcal{H}}^{\mathcal{E}}$, the spaces where they belong are different, and so is \emph{a priori} the presymplectic structure behind them (see Section \ref{sec:tulcyjew_and_PMP}). This particular point can have important repercussions for the discretization process of the theory and the resulting numerical methods.
\end{remark}
    

\begin{definition}
    Assume $\bar{u}(t)$ is such that the maximization condition is satisfied. Then,
    $\tilde{\mathcal{H}}(q,\kappa,p_q,p_{\kappa}) := \tilde{\mathcal{H}}^{\mathcal{E}}(q,\kappa,p_q,p_{\kappa},\bar{u})$ is called a (locally) optimal new control Hamiltonian (also control-independent new control Hamiltonian).
\end{definition}

    \begin{example}[Cont.~of Example \ref{exp:vec_example}]\hphantom{.}
    \label{exp:vec_example_cont_bis}
        \begin{enumerate}
            \item \label{itm:vec_example_cont_bis_1} Focusing on the case $\kappa \neq 0$, we have the fiber derivatives
            \begin{align*}
                \mathbb{F}\tilde{\mathcal{L}}^{\mathcal{E}}(q,\kappa,v_q,v_{\kappa},u) &= (q,\kappa, v_{\kappa} + \kappa (D_2 f_0 + D_2 f_3 u^3) - D_2 k, v_{q})\\
                \mathbb{F}\tilde{\mathcal{L}}(q,\kappa,v_q,v_{\kappa}) &= (q,\kappa, v_{\kappa} + \kappa D_2 f_0 - D_2 k, v_{q})
            \end{align*}
            as well as the new control Hamiltonians
            \begin{align*}
                \tilde{\mathcal{H}}^{\mathcal{E}}(q,\kappa,p_q,p_{\kappa},u) &= p_q p_{\kappa} - \kappa [f_0(q,p_{\kappa}) + f_3(q,p_{\kappa})u^3] + k(q,p_{\kappa})\\
                \tilde{\mathcal{H}}(q,\kappa,p_q,p_{\kappa}) &= p_q p_{\kappa} - \kappa f_0(q,p_{\kappa}) + k(q,p_{\kappa})
            \end{align*}
            \item \label{itm:vec_example_cont_bis_2} Using matrix notation, we get
            \begin{align*}
                \mathbb{F}\tilde{\mathcal{L}}^{\mathcal{E}}(q,\kappa,v_q,v_{\kappa},u) &= \left(q,\kappa, v_{\kappa} + (D_2 f_0 + D_2 f_1(u))^{\top}\kappa - \frac{1}{2} D_2 \mathrm{g}_{\mathcal{E}}(u,u), v_q\right)\\
                \mathbb{F}\tilde{\mathcal{L}}(q,\kappa,v_q,v_{\kappa}) &= \left(q,\kappa, v_{\kappa} + (D_2 f_0)^{\top}\kappa + \frac{1}{2} D_2 \mathrm{g}_{\mathcal{E}}^{-1}(f_1^{\top} \kappa,f_1^{\top} \kappa), v_q\right),
            \end{align*}
            while the Hamiltonians are
            \begin{align*}
                \tilde{\mathcal{H}}^{\mathcal{E}}(q,\kappa,p_q,p_{\kappa},u) &= p_{q}^{\top} p_{\kappa} - \kappa^{\top} [f_0(q,p_{\kappa}) + f_1(q,p_{\kappa})(u)] + \frac{1}{2}\mathrm{g}_{\mathcal{E}}(q,p_{\kappa})(u,u)\\
                \tilde{\mathcal{H}}(q,\kappa,p_q,p_{\kappa}) &= p_{q}^{\top} p_{\kappa} - \kappa^{\top} f_0(q,p_{\kappa}) - \frac{1}{2}\mathrm{g}_{\mathcal{E}}^{-1}(q,p_{\kappa})\left((f_1(q,p_{\kappa}))^{\top} \kappa,f_1(q,p_{\kappa}))^{\top} \kappa\right)\,.
            \end{align*}
        \end{enumerate}
    \end{example}

\subsection{Boundary costs}
In the process of integration by parts that led us from \eqref{eq:functional_J2} to \eqref{eq:functional_J3}, additional boundary costs, i.e. the initial and final costs
\begin{equation*}
    \kappa(T)^{\top} \dot{q}(T) - \kappa(0)^{\top} \dot{q}(0),
\end{equation*}
appear in the formulation. At first glance, the purpose and / or interpretation of these terms is difficult to parse.\\

Obviously they are necessary in order to have $\tilde{J}_3 = \tilde{J}_2$, and consequently, $\tilde{J}_1 = \tilde{J}_2 = \tilde{J}_3$ over optima. In fact, these equalities hold for curves satisfying the state and adjoint dynamics for a common $u$ compatible with the boundary conditions under the costate identifications provided earlier even if $u$ does not fulfill the maximization condition. A clearer picture begins to form precisely when we evaluate these functionals over curves these conditions.\\


Let us briefly consider the case of Lagrangian mechanics for a hyperregular Lagrangian. Consider the Lagrangian action \eqref{eq:Lagrangian_action}, and let $q_{a,b}: [t_a, t_b] \to \mathcal{Q}$ be a solution of the corresponding Euler-Lagrange equations with boundary values $q_a = q_{a,b}(t_a)$ and $q_b = q_{a,b}(t_b)$. When we evaluate the action over $q_{a,b}$, we define a new function, frequently denoted with the same letter as the action itself:
\begin{equation}
    \label{eq:Jacobi_solution}
    S(q_a,q_b) = \int_{t_a}^{t_b} L(q_{a,b}(t),\dot{q}_{a,b}(t)) \, dt.
\end{equation}
$S(q_a,q_b)$ is the Jacobi solution to the (in this case time-independent) Hamilton-Jacobi equation
\begin{equation*}
    E = H\left(q_a,-D_1 S(q_a,q_b)\right) = H\left(q_b, D_2 S(q_a,q_b)\right),
\end{equation*}
where $H$ is the Hamiltonian corresponding to $L$.  \eqref{eq:Jacobi_solution} behaves as a generating function of canonical transformations, i.e. symplectomorphisms, of the first kind \cite{Goldstein02}, meaning that
\begin{align*}
    D_1 S(q_a,q_b) &= - D_{2} L(q_{a,b}(t_a),\dot{q}_{a,b}(t_a)) = - p_a^{\top}\,,\\
    D_2 S(q_a,q_b) &= D_2 L(q_{a,b}(t_b),\dot{q}_{a,b}(t_b)) = p_b^{\top}\,.
\end{align*}
Their name stems from the fact that the induced map $\varphi: T^*\mathcal{Q} \to T^*\mathcal{Q}$, $(q_a,p_a) \mapsto (q_b,p_b)$ is indeed a symplectomorphism. These are particularly important for the formulation of the equivalence between continuous and discrete mechanics and the generation of variational integrators since \eqref{eq:Jacobi_solution} defines the \emph{so-called} exact discrete Lagrangian \cite{MarsdenWest01}. In \cite{Goldstein02} we also find generating functions of second, $S_2(p_a,q_b)$, third, $S_3(q_b,p_a)$, and fourth kind
\begin{equation*}
    S_4(p_a,p_b) + p_b^{\top} q_b - p_a^{\top} q_a = S(q_a,q_b)\,.
\end{equation*}
For these latter ones, we have that
\begin{align*}
    D_1 S_4(p_a,p_b) &= q_a\,,\\
    D_2 S_4(p_a,p_b) &= -q_b\,.
\end{align*}

Going back to the optimal control setting, one can proceed in a similar manner \cite{ParkScheeres04}. Let us consider a curve $(x_{a,b}, u_{a,b}) = (q_{a,b,},v_{a,b}, u_{a,b}) : [t_a,t_b] \to \mathcal{E}$, with $t_a < t_b \in (0,T)$, satisfying the state dynamics resulting from $\tilde{J}_1$ together with $(q_a,v_a) = x_{a,b}(t_a)$ and $(q_b,v_b) = x_{a,b}(t_b)$. Thus, one can generate a function
\begin{equation*}
    S_C^{\mathcal{E}}(q_a,v_a,q_b,v_b,u_{a,b}) = \int_{t_a}^{t_b} C(x_{a,b}(t),u_{a,b}(t)) \, dt\,,
\end{equation*}
which may be interpreted as a control-dependent generating function of first kind for the optimal control problem.
\emph{A priori}, the adjoint variables play no role in this definition. Nevertheless, we have that
\begin{equation*}
    \int_{t_a}^{t_b} C(x_{a,b}(t),u_{a,b}(t)) \, dt = \int_{t_a}^{t_b} \left[ C(x_{a,b}(t),u_{a,b}(t)) + \left\langle \lambda_{a,b}(t), \dot{x}_{a,b}(t) - X(x_{a,b}(t),u_{a,b}(t)) \right\rangle\right] \, dt
\end{equation*}
for any curve $\lambda_{a,b} = (\lambda_q^{a,b},\lambda_v^{a,b})$ over $x_{a,b}$. With this, one can show through variation 
that
\begin{align*}
    D_1 S_C^{\mathcal{E}}(q_a,v_a,q_b,v_b,u_{a,b}) &= -(\lambda_q^a)^{\top}\,,\\
    D_2 S_C^{\mathcal{E}}(q_a,v_a,q_b,v_b,u_{a,b}) &= -(\lambda_v^a)^{\top}\,,\\
    D_3 S_C^{\mathcal{E}}(q_a,v_a,q_b,v_b,u_{a,b}) &= (\lambda_q^b)^{\top}\,,\\
    D_4 S_C^{\mathcal{E}}(q_a,v_a,q_b,v_b,u_{a,b}) &= (\lambda_v^b)^{\top}\,,
\end{align*}
if and only if $\lambda_{a,b}$ is assumed to satisfy the adjoint dynamics resulting from $\tilde{J}_1$ over $[t_a,t_b]$ together with $(\lambda_q^a,\lambda_v^a) = (\lambda_{q}^{a,b}(t_a), \lambda_{v}^{a,b}(t_a))$ and $(\lambda_q^b,\lambda_v^b) = (\lambda_{q}^{a,b}(t_b), \lambda_{v}^{a,b}(t_b))$.\\

Let us now consider curves $(y_{a,b}, u_{a,b}) = (q_{a,b}, \kappa_{a,b}, u_{a,b}) = (q_{a,b}, \lambda_v^{a,b}, u_{a,b})$  satisfying the state and adjoint dynamics resulting from $\tilde{J}_3$ over $[t_a,t_b]$ together with $(q_a,\kappa_a) = y_{a,b}(t_a)$ and $(q_b,\kappa_b) = y_{a,b}(t_b)$. If we define the function
\begin{equation*}
    S_{\tilde{\mathcal{L}}}^{\mathcal{E}}(q_a,\kappa_a,q_b,\kappa_b,u_{a,b}) = \int_{t_a}^{t_b} \tilde{\mathcal{L}}^{\mathcal{E}}(y_{a,b}(t),u_{a,b}(t)) \, dt\,.
\end{equation*}
we find that
\begin{align*}
    D_1 S_{\tilde{\mathcal{L}}}^{\mathcal{E}}(q_a,\kappa_a,q_b,\kappa_b,u_{a,b}) &= - (p_q^{a})^{\top} = (\lambda_q^a)^{\top}\,,\\
    D_2 S_{\tilde{\mathcal{L}}}^{\mathcal{E}}(q_a,\kappa_a,q_b,\kappa_b,u_{a,b}) &= - (p_{\kappa}^{a})^{\top} = - (v_a)^{\top} = - (v_{a,b}(t_a))^{\top}\,,\\
    D_3 S_{\tilde{\mathcal{L}}}^{\mathcal{E}}(q_a,\kappa_a,q_b,\kappa_b,u_{a,b}) &= (p_q^{b})^{\top} = -(\lambda_q^b)^{\top}\,,\\
    D_4 S_{\tilde{\mathcal{L}}}^{\mathcal{E}}(q_a,\kappa_a,q_b,\kappa_b,u_{a,b}) &= (p_{\kappa}^{b})^{\top} = (v_b)^{\top} = (v_{a,b}(t_b))^{\top}\,.
\end{align*}
This has already been explored in \cite{Konopik25a} for a restricted class of $\tilde{\mathcal{L}}^{\mathcal{E}}$. From the equivalence of $\tilde{J}_1$, $\tilde{J}_2$ and $\tilde{J}_3$ we get that
\begin{equation*}
    \kappa_b^{\top} v_b - \kappa_a^{\top} v_a - S_{\tilde{\mathcal{L}}}^{\mathcal{E}}(q_a,\kappa_a,q_b,\kappa_b,u_{a,b}) = S_{C}^{\mathcal{E}}(q_a,v_a,q_b,v_b,u_{a,b}),
\end{equation*}
and so
\begin{align*}
    D_{q_a}\left[ \kappa_b^{\top} v_b - \kappa_a^{\top} v_a - S_{\tilde{\mathcal{L}}}^{\mathcal{E}}(q_a,\kappa_a,q_b,\kappa_b,u_{a,b})\right] &= -(\lambda_q^a)^{\top}\,,\\
    D_{v_a}\left[ \kappa_b^{\top} v_b - \kappa_a^{\top} v_a - S_{\tilde{\mathcal{L}}}^{\mathcal{E}}(q_a,\kappa_a,q_b,\kappa_b,u_{a,b})\right] &= - \kappa_a^{\top} = - (\lambda_v^a)^{\top}\,,\\
    D_{\kappa_a}\left[ \kappa_b^{\top} v_b - \kappa_a^{\top} v_a - S_{\tilde{\mathcal{L}}}^{\mathcal{E}}(q_a,\kappa_a,q_b,\kappa_b,u_{a,b})\right] &= 0\,,\\
    D_{q_b}\left[ \kappa_b^{\top} v_b - \kappa_a^{\top} v_a - S_{\tilde{\mathcal{L}}}^{\mathcal{E}}(q_a,\kappa_a,q_b,\kappa_b,u_{a,b})\right] &= (\lambda_q^b)^{\top}\,,\\
    D_{v_b}\left[ \kappa_b^{\top} v_b - \kappa_a^{\top} v_a - S_{\tilde{\mathcal{L}}}^{\mathcal{E}}(q_a,\kappa_a,q_b,\kappa_b,u_{a,b})\right] &= \kappa_{b}^{\top} = (\lambda_{v}^b)^{\top}\,,\\
    D_{{\kappa}_b}\left[ \kappa_b^{\top} v_b - \kappa_a^{\top} v_a - S_{\tilde{\mathcal{L}}}^{\mathcal{E}}(q_a,\kappa_a,q_b,\kappa_b,u_{a,b})\right] &= 0\,.
\end{align*}
Therefore, the additional boundary costs can be interpreted as an interfacing or translation layer that allows us to pass from one formulation to the other, i.e. one set of variables to the other, so that the resulting canonical momenta at the boundaries are consistent. Moreover, comparing these boundary terms with the additional terms that appear when defining generating functions of the fourth kind, we see that when using $S_{\tilde{\mathcal{L}}}^{\mathcal{E}}$ we are working with a mixed generating function of first and fourth kind: first on $q$, fourth on $v$ since $\kappa = \lambda_v$ are their associated canonical momenta.

\begin{remark}
    When evaluated over $u_{a,b}$ satisfying the corresponding maximization condition, $S_{C}^{\mathcal{E}}$ and $S_{\tilde{\mathcal{L}}}^{\mathcal{E}}$ become standard generating functions. Therefore, the induced transformations $(q_a,v_a,\lambda_q^a,\lambda_v^a) \mapsto (q_b,v_b,\lambda_q^b,\lambda_v^b)$ and $(q_a,\kappa_a,p_q^a,p_{\kappa}^a) \mapsto (q_b,\kappa_b,p_q^b,p_{\kappa}^b)$ are symplectic on $T^*T\mathcal{Q}$ and $T^*T^*\mathcal{Q}$ respectively, which can be important in numerical applications. They are, however, equivalent, as will be clarified in Section \ref{sec:tulcyjew_and_PMP}.
\end{remark}

\subsection{Reformulation for Lagrangian systems}
\label{ssec:OptimalControlLagrangian}

    As we saw in Section~\ref{sssec:force_controlled EL_eqs}, given a regular force-controlled Lagrangian system, its equations of motion \eqref{eq:force_controlled_EL} are expressible as controlled SODEs. Therefore, the former constructions are readily applicable and, in particular, given a running cost we may immediately construct an associated new control Lagrangian as per Definition~\ref{def:new_control_Lagrangian}.

    However, as we previously mentioned in Remark~\ref{rmk:Lagrangian_SODE_num}, it is not generally advisable to do so, particularly when the aim is to perform numerical computations. Whenever the underlying controlled SODE is known to originate from forced Euler-Lagrange equations, it is best to work with the latter.

    Let us assume that is the case. Also, in order to simplify the geometric picture, let us assume the underlying Lagrangian system is not only regular but hyperregular. Then \eqref{eq:functional_J2} transforms into
    \begin{align}
            &\hat{J}_2(z,u) = \phi(q(T),\dot{q}(T)) \label{eq:functional_J2_EL}\\
            &\quad + \int_{0}^T \left\lbrace C(q(t),\dot{q}(t),u(t)) + \left[ \frac{d}{dt}\left(D_2 L(q(t),\dot{q}(t))\right) - D_1 L(q(t),\dot{q}(t)) -f_L^{\mathcal{E}}(q(t),\dot{q}(t),u(t)) \right] \, \xi(t) \right\rbrace dt\,.\nonumber
    \end{align}
    where $z = (q,\xi)$ is a curve on $T\mathcal{Q}$. If we expand the force-controlled Euler-Lagrange equations and express them as explicit controlled SODEs, then, this reduces to \eqref{eq:functional_J2} under the substitution $\kappa^{\top} = D_{2 2} L(q,v) \, \xi$.

    \begin{remark}
        Notice that the force-controlled Euler-Lagrange equations behave as the components of a semi-basic form. This already implies that $\xi$ must have vectorial character. Moreover, the regularity of the force-controlled Lagrangian system implies that $D_{2 2} L(q,v)$ plays the role of a (possibly pseudo-Riemannian) metric on $V T\mathcal{Q}$, the vertical bundle of $T \mathcal{Q}$, conjugate to the space of semi-basic forms. This underlines the same fact in the relation between the covector $\kappa$ and the vector $\xi$.
    \end{remark}

    Let us finally apply integration by parts as in \eqref{eq:functional_J3}. This leads to the new functional
    \begin{align}
		&\tilde{J}_4(y,u) = \phi(q(T),\dot{q}(T)) + D_2 L(q(T),\dot{q}(T)) \, \xi(T) - D_2 L(q(0),\dot{q}(0)) \, \xi(0) \label{eq:functional_J4}\\
		&\quad+ \int_{0}^T \left[ C(q(t),\dot{q}(t),u(t)) - D_2 
 L(q(t),\dot{q}(t)) \, \dot{\xi}(t) - \left( D_1 
 L(q(t),\dot{q}(t)) + 
 f_L^{\mathcal{E}}(q(t),\dot{q}(t),u(t))\right) \, \xi(t) \right] dt\,.\nonumber
    \end{align}

This leads us once more to provide the following

    \begin{definition}
    \label{def:new_control_Lagrangian_EL}
        Let $(\mathcal{Q},\mathcal{E},L,f_L^{\mathcal{E}})$ be a regular force-controlled Lagrangian system and $C: \mathcal{E} \to \mathbb{R}$ a running cost function. Then, $\tilde{\mathcal{L}}^\mathcal{E}_L: TT\mathcal{Q} \oplus_{T\mathcal{Q}}^{\kappa} \mathcal{E} \to \mathbb{R}$,
        \begin{equation*}
            \tilde{\mathcal{L}}^\mathcal{E}_L(q,\xi,v_q,v_{\xi},u) = D_2 L(q,v_q) \, v_{\xi} + \left[ D_1 L(q,v_q) + f_L^{\mathcal{E}}(q,v_q,u) \right] \, \xi - C(q,v_q,u)\,,
        \end{equation*}
        is called a new control Lagrangian for the force-controlled Lagrangian system.
    \end{definition}
    We leave the formal definition of $TT\mathcal{Q} \oplus_{T\mathcal{Q}}^{\kappa} \mathcal{E}$ for Section \ref{sec:tulcyjew_and_PMP}.\\

     Once more, one can check that the necessary optimality conditions for $\tilde{J}_4$ are given by
            \begin{itemize}
                \item (adjoint dynamics) $0 = D_1 \tilde{\mathcal{L}}_L^\mathcal{E}(q(t),\xi(t),\dot{q}(t),\dot{\xi}(t),u(t)) - \frac{d}{dt}\left( D_3 \tilde{\mathcal{L}}_L^\mathcal{E}(q(t),\xi(t),\dot{q}(t),\dot{\xi}(t),u(t)) \right)$\\
                \hphantom{(adjoint dynamics) 0} $= -D_1 C + D_{2 1} L \,\dot{\xi} + (D_{1 1}L + D_1 f_L^{\mathcal{E}}) \, \xi + \frac{d}{dt} \left[ D_2 C - D_{2 2} L \, \dot{\xi} - (D_{1 2}L + D_2 f_L^{\mathcal{E}}) \, \xi \right]$
                \item (state dynamics) $0 = D_2 \tilde{\mathcal{L}}_L^\mathcal{E}(q(t),\xi(t),\dot{q}(t),\dot{\xi}(t),u(t)) - \frac{d}{dt}\left( D_4 \tilde{\mathcal{L}}_L^\mathcal{E}(q(t),\xi(t),\dot{q}(t),\dot{\xi}(t),u(t)) \right)$\\
                \hphantom{(state dynamics) 0}
                $= D_1 L + f_L^{\mathcal{E}} - \frac{d}{dt}\left( D_2 L \right)$,
                \item (maximization) $0 = D_5 \tilde{\mathcal{L}}_L^\mathcal{E}(q(t),\xi(t),\dot{q}(t),\dot{\xi}(t),u(t))$\,\\
                \hphantom{(maximization) 0}
                $= D_3 C - D_3 f_L^{\mathcal{E}}\, \xi$,
                \item (transversality) $D_{2 2} L(q(T),\dot{q}(T)) \xi(T) = -D_2 \phi(q(T),\dot{q}(T))$,\\
                \hphantom{(transversality)} $D_{2 2} L(q(T),\dot{q}(T)) \dot{\xi}(T) = D_1 \phi(q(T),\dot{q}(T))$\\
                \hphantom{(transversality) $D_{2 2} L(q(T),\dot{q}(T)) \dot{\xi}(T)$} $+\, D_2 C(q(T),\dot{q}(T),u(T)) - \xi(T) D_2 f_L^{\mathcal{E}}(q(T),\dot{q}(T),u(t))$.
        \end{itemize}

\begin{remark}
    Notice that the adjoint dynamics take the form of a modified Jacobi equation for the force-controlled Euler-Lagrange equations. In it, the Euler-Lagrange operator applied to the running cost $C$ plays the role of a forcing of sorts.
\end{remark}

\begin{example}
    \label{exp:low_thrust}
    Let us take the low thrust orbital transfer example in \cite{Leyendecker2024new}. There $\mathcal{Q} = \mathbb{R}^2 \setminus\{0\}$, $\mathcal{N} = \mathbb{R}$, $L: T(\mathbb{R}^2 \setminus\{0\}) \to \mathbb{R}$, using polar coordinates
    \begin{equation*}
        L(q,v_q) = L(r,\varphi,v_r,v_\varphi) = \frac{1}{2} m (v_r^2 + r^2 v_\varphi^2)+ \gamma \frac{M m}{r}
    \end{equation*}
    and $f_L^{\mathcal{E}} = m r u d \varphi$. The resulting equations of motion are
    \begin{align*}
             \frac{d}{dt}( m\dot{r}) &=  mr\dot{\varphi}^2-\frac{\gamma M m}{r^2}\\
             \frac{d}{dt}( mr^2\dot{\varphi}) &=  mru.
    \end{align*}
    The cost function $\mathcal{C} : \mathcal{E} \to \mathbb{R}$ takes the particularly simple form
    \begin{equation*}
        C(q,v_q,u) = C(r,\varphi,v_r,v_\varphi,u) = \frac{1}{2} u^2,
    \end{equation*}
    so with the conventions established here
    \begin{align*}
        \tilde{\mathcal{L}}^{\mathcal{E}}(q,\kappa,v_q,v_{\kappa},u) &= \tilde{\mathcal{L}}^{\mathcal{E}}(r,\varphi,\kappa_r,\kappa_{\varphi},v_r,v_\varphi,v_{\kappa_r},v_{\kappa_{\varphi}},u)\\
        &= v_{\kappa_r} v_r + v_{\kappa_{\varphi}} v_\varphi + \kappa_r \left( r v_{\varphi}^2 - \frac{\gamma M }{r^2}\right) + \kappa_{\varphi}\left(-\frac{2 v_{r} v_{\varphi}}{r} + \frac{u}{r}\right) - \frac{1}{2} u^2\,.\\
        \tilde{\mathcal{L}}_L^{\mathcal{E}}(q,\xi,v_q,v_{\xi},u) &= \tilde{\mathcal{L}}_L^{\mathcal{E}}(r,\varphi,\xi_r,\xi_{\varphi},v_r,v_\varphi,v_{\xi_r},v_{\xi_{\varphi}},u)\\
        &= m v_{\xi_r} v_r + m r^2 v_{\xi_{\varphi}} v_\varphi + m \left( r v_{\varphi}^2 - \frac{\gamma M }{r^2}\right) \xi_r + m r u \xi_{\varphi} - \frac{1}{2} u^2\,.
    \end{align*}
    One can check that the resulting state and adjoint dynamics are the same under the identification $\kappa_r = m \xi_r$ and $\kappa_{\varphi} = m r^2 \xi_{\varphi}$.
\end{example}

Consider now the fiber derivative induced by $\tilde{\mathcal{L}}^{\mathcal{E}}_L$, namely, $\mathbb{F}\tilde{\mathcal{L}}^{\mathcal{E}}_L: TT\mathcal{Q} \oplus_{T\mathcal{Q}}^{\kappa} \mathcal{E} \to T^*T\mathcal{Q} \oplus_{T\mathcal{Q}}^{\alpha,L} \mathcal{E}$, where $T^*T\mathcal{Q} \oplus_{T\mathcal{Q}}^{\alpha,L} \mathcal{E}$ will be defined in Section \ref{sec:tulcyjew_and_PMP},
    \begin{equation*}
        \mathbb{F}\tilde{\mathcal{L}}^{\mathcal{E}}_L( q,  \xi, v_q, v_{\xi}, u) = \left(q, \xi, \varpi_{q} = D_{3}\tilde{\mathcal{L}}^{\mathcal{E}}_L( q,  \xi, v_q, v_{\xi}, u), \varpi_{\xi} = D_{4}\tilde{\mathcal{L}}^{\mathcal{E}}_L( q,  \xi, v_q, v_{\xi}, u), u\right),
    \end{equation*}

More explicitly,
\begin{align*}
    \varpi_q^{\top} &= D_{2 2} L(q,v_{q}) \, v_{\xi} + \left[ D_{1 2} L(q,v_{q}) + D_2 f_L^{\mathcal{E}}(q,v_{q},u)\right] \, \xi - D_2 C(q,v_{q},u)\,,\\
    \varpi_{\xi}^{\top} &= D_2 L(q,v_{q})\,.
\end{align*}

With this, and under our hyperregularity and smoothness assumptions it is immediate to check that this relationship is smoothly invertible, which provides us with the analogue of Proposition~\ref{prp:hyperregularity_new_Lagrangian}:

\begin{proposition}
    When $u$ is regarded as a parameter, if $L$ is hyperregular, then $\tilde{\mathcal{L}}^{\mathcal{E}}_L$ is also hyperregular.
\end{proposition}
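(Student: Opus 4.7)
The plan is to mimic the proof of Proposition~\ref{prp:hyperregularity_new_Lagrangian}, exploiting the explicit block structure of the Hessian of $\tilde{\mathcal{L}}^{\mathcal{E}}_L$ with respect to the fiber coordinates $(v_q,v_\xi)$. From the expressions for $\varpi_q$ and $\varpi_\xi$ given just above the statement, one reads off that
\begin{equation*}
    \begin{pmatrix} D_{33}\tilde{\mathcal{L}}^{\mathcal{E}}_L & D_{43}\tilde{\mathcal{L}}^{\mathcal{E}}_L \\ D_{34}\tilde{\mathcal{L}}^{\mathcal{E}}_L & D_{44}\tilde{\mathcal{L}}^{\mathcal{E}}_L \end{pmatrix} = \begin{pmatrix} D_{33}\tilde{\mathcal{L}}^{\mathcal{E}}_L & D_{22}L \\ D_{22}L & 0 \end{pmatrix},
\end{equation*}
where the precise form of the upper-left block is immaterial. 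The matrices in the lower row commute trivially, so Silvester's identity applies and the determinant equals $(-1)^{\dim\mathcal{Q}}\det\bigl((D_{22}L)^2\bigr)$. Hyperregularity of $L$ implies that $D_{22}L(q,v_q)$ is everywhere invertible (and even defines a global diffeomorphism on fibers via $\mathbb{F}L$), so this determinant is nonzero everywhere. Hence $\mathbb{F}\tilde{\mathcal{L}}^{\mathcal{E}}_L$ is a local diffeomorphism, i.e. $\tilde{\mathcal{L}}^{\mathcal{E}}_L$ is (super)regular for every $u$.

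For the global step, the point is that the fiber derivative can be inverted explicitly in a two-stage cascade. First, the equation $\varpi_\xi^{\top}=D_2 L(q,v_q)$ is, by hyperregularity of $L$, globally solvable for $v_q$ as $v_q = (\mathbb{F}L)^{-1}(q,\varpi_\xi)$, independently of the remaining data. Substituting this into the equation for $\varpi_q$ and using the global invertibility of $D_{22}L$ yields
\begin{equation*}
    v_\xi = (D_{22}L(q,v_q))^{-1}\Bigl[\varpi_q^{\top} + D_2 C(q,v_q,u) - \bigl(D_{12}L(q,v_q) + D_2 f_L^{\mathcal{E}}(q,v_q,u)\bigr)\xi\Bigr],
\end{equation*}
which is defined globally and depends smoothly on $(q,\xi,\varpi_q,\varpi_\xi,u)$. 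Composing the two stages provides a global smooth inverse of $\mathbb{F}\tilde{\mathcal{L}}^{\mathcal{E}}_L$ on the appropriate total space, proving hyperregularity.

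Since the computation is entirely routine once the block form of the Hessian is identified, I do not anticipate a substantive obstacle; the only care needed is to keep straight which pieces of $\mathbb{F}\tilde{\mathcal{L}}^{\mathcal{E}}_L$ require merely local invertibility of $D_{22}L$ (for regularity at a point) and which require its global invertibility via $\mathbb{F}L$ (for the upgrade to a global diffeomorphism). The $u$-dependent terms enter only through $\xi$-linear and $C$-dependent additive shifts in $\varpi_q$, which are harmless for invertibility with $u$ treated as a parameter.
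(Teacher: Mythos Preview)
Your proof is correct and follows essentially the same approach as the paper: the paper simply states that, from the explicit formulas for $\varpi_q$ and $\varpi_\xi$, ``under our hyperregularity and smoothness assumptions it is immediate to check that this relationship is smoothly invertible,'' which is precisely your two-stage cascade inversion. Your additional Hessian computation via Silvester's identity is not in the paper's (brief) argument but is a natural and correct elaboration in the spirit of the earlier Proposition~\ref{prp:hyperregularity_new_Lagrangian}.
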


Just like in the controlled SODE case, we can also define an associated energy function,
    \begin{equation*}
        E_{\tilde{\mathcal{L}}^{\mathcal{E}}_L} = \triangle \tilde{\mathcal{L}}^{\mathcal{E}}_L - \tilde{\mathcal{L}}^{\mathcal{E}}_L\,,
    \end{equation*}
    with Liouville field $\triangle = v_q^i \partial_{v_q^i} + v_{\xi}^i \partial_{v_{\xi}^i}$ in this case.\\
    
Using the hyperregularity of $\tilde{\mathcal{L}}^{\mathcal{E}}_L$ we can also define a new control Hamiltonian in this setting.

    \begin{definition}
        Let $\tilde{\mathcal{L}}^{\mathcal{E}}_L$ be a new control Lagrangian for a hyperregular force-controlled Lagrangian system $(\mathcal{Q},\mathcal{E},L,f_L^{\mathcal{E}})$ with running cost $C$. Then, the Legendre transform of this new control Lagrangian, i.e.
        $\tilde{\mathcal{H}}^{\mathcal{E}}_L = (E_{\tilde{\mathcal{L}}^{\mathcal{E}}_L} \circ (\mathbb{F}\tilde{\mathcal{L}}^{\mathcal{E}}_L)^{-1}) : T^*T\mathcal{Q} \oplus_{T\mathcal{Q}}^{\alpha,L} \mathcal{E} \to \mathbb{R}$,
        \begin{equation*}
             \tilde{\mathcal{H}}^{\mathcal{E}}_L (q,\xi,\varpi_q,\varpi_{\xi},u) = \varpi_{q}^{\top} v_q(q,\varpi_{\xi}) - \left[ D_1 L(q,v_q(q,\varpi_{\xi})) + f_L^{\mathcal{E}}(q,v_q(q,\varpi_{\xi}),u) \right] \xi + C(q,v_q(q,\varpi_{\xi}),u),
        \end{equation*}
        where $v_q(q,\varpi_{\xi})$ is the solution of $\varpi_{\xi}^{\top} = D_2 L(q,v_q)$,
        is called a new control Hamiltonian for the force-controlled Lagrangian system.
    \end{definition}
    As in the controlled SODE case, we may also provide the following
    \begin{definition}
        Assume $\bar{u}(t)$ is such that the maximization condition is satisfied. Then,
        $\tilde{\mathcal{L}}_L(q,\xi,v_q,v_{\xi}) := \tilde{\mathcal{L}}_L^{\mathcal{E}}(q,\xi,v_q,v_{\xi},\bar{u})$ and $\tilde{\mathcal{H}}_L(q,\xi,\varpi_q,\varpi_{\xi}) := \tilde{\mathcal{H}}_L^{\mathcal{E}}(q,\xi,\varpi_q,\varpi_{\xi},\bar{u})$ are called a (locally) optimal new control Lagrangian and Hamiltonian for the force-controlled Lagrangian system respectively.
    \end{definition}

    Now, since $L$ is hyperregular, $\mathbb{F}L: T\mathcal{Q} \to T^*\mathcal{Q}$ is a diffeomorphism. This also implies that $L$ has a corresponding Hamiltonian, $H: T^*Q \to \mathbb{R}$, $H = (E_L \circ (\mathbb{F}L)^{-1})$,
    \begin{equation*}
        H(q,p_q) = p_q^{\top} v(q,p_q) - L(q, v(q,p_q)).
    \end{equation*}
    where $v_q(q,p_q)$ is the solution of $p_q^{\top} = D_2 L(q,v_q)$. This velocity, however, can be rewritten in terms of the Hamiltonian itself, as $v_q(q,p_q) = D_2 H(q,p_q)$. One also finds that $D_1 H(q,p_q) = - D_1 L(q,v_q(q,p_q))$.

    One can also consider a new control space $(\mathcal{F},\pi^{\mathcal{F}},T^*\mathcal{Q})$, provided by any vector bundle isomorphism over $\mathbb{F}L$,
    \begin{center}
        \begin{tikzcd}
        \mathcal{E} \arrow[rr, "\chi_{\mathbb{F}L}"] \arrow[dd, "\pi^{\mathcal{E}}"'] &  & \mathcal{F} \arrow[dd, "\pi^{\mathcal{F}}"] \\
                                                                                      &  &                                             \\
        T\mathcal{Q} \arrow[rr, "\mathbb{F}L"]                                        &  & T^*\mathcal{Q}                             
        \end{tikzcd}
    \end{center}
    
    and define $C^H = C \circ \chi_{\mathbb{F}L}^{-1}$, and $f_H^{\mathcal{F}} = f_L^{\mathcal{E}} \circ \chi_{\mathbb{F}L}^{-1}$. In general $\chi_{\mathbb{F}L}(q,v_q,u) = (q,p_q = D_2 L(q,v_q), w = \chi(q,v_q) u)$. Let us choose, for simplicity $\chi_{\mathbb{F}L}(q,v_q,u) = (q,p_q = D_2 L(q,v_q),u)$, $\chi_{\mathbb{F}L}^{-1}(q,p_q,u) = (q,v_q = D_2 H(q,p_q),u)$ to perform the following computations. Thus,
    \begin{align*}
        C_{H}(q,p_q,u) &= C(q,D_2 H(q,p_q),u)\\
        f_H^{\mathcal{F}}(q,p_q,u) &= f_L^{\mathcal{E}}(q,D_2 H(q,p_q),u).
    \end{align*}
    The equations of motion of the system are now forced Hamilton's equations, which take the local form
    \begin{subequations}
    \begin{align}
        \dot{q}(t) &= D_2 H(q,p_q),\label{eq:forced_Hamiltonian_eqs_1}\\
        \dot{p}_q(t) &= -D_1 H(q,p_q) + f_H^{\mathcal{F}}(q,p_q,u),\label{eq:forced_Hamiltonian_eqs_2}
    \end{align}
    \label{eq:forced_Hamiltonian_eqs}
    \end{subequations}
    or, in invariant form
    \begin{equation*}
        \imath_{X_H} \omega = dH + \eta^{\mathcal{F}},
    \end{equation*}
    where $X_H$ is the Hamiltonian vector field corresponding to \eqref{eq:forced_Hamiltonian_eqs} and $\eta^{\mathcal{F}} \in \Gamma_{\mathrm{loc}}(\mathcal{F},T^*T^*\mathcal{Q})$, interpreted as a semibasic forcing form $\eta^{\mathcal{F}} = (f_H^{\mathcal{F}})_i(q,p_q,u) \, dq^i$. We say that $(T^*\mathcal{Q},\mathcal{F},H,f_H^{\mathcal{F}})$ is a force-controlled Hamiltonian system on $T^*\mathcal{Q}$. Realizing that $\varpi_{\xi} = p_q$ and plugging these definitions into $\tilde{\mathcal{H}}^{\mathcal{E}}_L$, we obtain
    \begin{equation}
        \label{eq:new_old_Hamiltonian}
        \tilde{\mathcal{H}}^{\mathcal{E}}_L(q,\xi,\varpi_q,p_q,u) = \varpi_{q}^{\top} D_2 H(q,p_q) + \left[ D_1 H(q,p_q) - f_H^{\mathcal{F}}(q,p_q,u) \right] \xi + C_H(q,p_q,u).
    \end{equation}
    But, had we started from the force-controlled Hamiltonian system from the very beginning, Pontryagin's control Hamiltonian \eqref{eq:Pontryagins_control_Hamiltonian} would take the form
    \begin{align*}
        \mathcal{H}_{-1}(q,p_q,\lambda_q,\lambda_p,u) &= \left\langle (\lambda_q, \lambda_p),(D_2 H(q,p_q), -D_1 H(q,p_q) + f_H^{\mathcal{F}}(q,p_q,u)) \right\rangle_{T^*\mathcal{Q}} - C_H(q,p_q,u)\\
       &= \lambda_q^\top D_2 H(q,p_q) - \left[ D_1 H(q,p_q) - f_H^{\mathcal{F}}(q,p_q,u) \right] \lambda_p - C_H(q,p_q,u)
    \end{align*}
    However, under the identification $\varpi_q = -\lambda_q$, $\xi = \lambda_p$, we see that this coincides with $-\tilde{\mathcal{H}}^{\mathcal{E}}_L$. We will shed some light on this result in the following section.
    

    \begin{example}[Cont.~of Example \ref{exp:low_thrust}]
    Let $\chi_{\mathbb{F}L}(q,v_q,u) = (q,p_q=D_2 L(q,v_q),u)$. Then,
    \begin{align*}
        H(q,p_q) = H(r,\varphi,p_r,p_\varphi) &= \frac{1}{2 m} \left( p_r^2 + \frac{p_\varphi^2}{r^2}\right) - \gamma \frac{M m}{r}\\
        f_H^{\mathcal{F}}(q,p_q,u) = f_H^{\mathcal{F}}(r,\varphi,p_r,p_\varphi,u) &= mr u\\
        C_H(q,p_q,u) = C_H(r,\varphi,p_r,p_\varphi,u) &= \frac{1}{2} u^2\,.
    \end{align*}
    Inverting $\mathbb{F}L$ we get that $v_q(q,p_q) = (p_r/m, p_{\varphi}/(m r^2))$, so
    \begin{align*}
        \tilde{\mathcal{H}}_L^{\mathcal{E}}(q,\xi,\varpi_q,\varpi_{\xi},u) &= \tilde{\mathcal{H}}_L^{\mathcal{E}}(r,\varphi,\xi_r,\xi_{\varphi},\varpi_r, \varpi_{\varphi},\varpi_{\xi_r}, \varpi_{\xi_{\varphi}},u)\\
        &= \varpi_r \frac{\varpi_{\xi_r}}{m} + \varpi_{\varphi} \frac{\varpi_{\xi_{\varphi}}}{m r^2} - \left( \frac{{\varpi_{\xi_{\varphi}}}^2}{m r^3} - \frac{\gamma M }{r^2}\right) \xi_r + m r u \xi_{\varphi} + \frac{1}{2} u^2\\
        \mathcal{H}_{-1}(q,p_q,\lambda_q,\lambda_{p},u) &= \mathcal{H}_{-1}(r,\varphi,p_r,p_{\varphi},\lambda_r, \lambda_{\varphi},\lambda_{p_r}, \lambda_{p_{\varphi}},u)\\
        &= \lambda_r \frac{p_r}{m} + \lambda_{\varphi} \frac{p_{\varphi}}{m r^2} + \lambda_{p_r} \left( \frac{{\varpi_{\xi_{\varphi}}}^2}{m r^3} - \frac{\gamma M }{r^2}\right) - \lambda_{p_{\varphi}} m r u - \frac{1}{2} u^2\,.\\
    \end{align*}
    \end{example}
    %
    

\section{Tulczyjew's triple in optimal control of second-order systems}
\label{sec:tulcyjew_and_PMP}

The double bundles derived from $T\mathcal{Q}$ and $T^*\mathcal{Q}$, namely $TT\mathcal{Q}$, $T^*T\mathcal{Q}$, $TT^*\mathcal{Q}$ and $T^*T^*\mathcal{Q}$ have a rich geometric structure. In particular, the latter three are part of what is called Tulczyjew's triple. This is an isomorphic relation between these bundles, via two isomorphisms $\alpha_\mathcal{Q}: TT^*\mathcal{Q} \to T^*T\mathcal{Q}$ and $\beta_\mathcal{Q}: TT^*\mathcal{Q} \to T^*T^*\mathcal{Q}$ introduced in his papers \cite{TulczyjewHam,TulczyjewLag}.

\begin{center}
    \begin{tikzcd}
    T^*T\mathcal{Q} \arrow[rdd, "\pi_{T\mathcal{Q}}"'] &                                                 & TT^*\mathcal{Q} \arrow[ldd, "T\pi_{\mathcal{Q}}"'] \arrow[rdd, "\tau_{T^*\mathcal{Q}}"] \arrow[ll, "\alpha_{\mathcal{Q}}"'] \arrow[rr, "\beta_{\mathcal{Q}}"] &                                                 & T^*T^*\mathcal{Q} \arrow[ldd, "\pi_{T^*\mathcal{Q}}"] \\
                                                       &                                                 &                                                                                                                                                               &                                                 &                                                       \\
                                                       & T\mathcal{Q} \arrow[rdd, "\tau_{\mathcal{Q}}"'] &                                                                                                                                                               & T^*\mathcal{Q} \arrow[ldd, "\pi_{\mathcal{Q}}"] &                                                       \\
                                                       &                                                 &                                                                                                                                                               &                                                 &                                                       \\
                                                       &                                                 & \mathcal{Q}                                                                                                                                                   &                                                 &                                                      
    \end{tikzcd}
\end{center}

The former, $\alpha_{\mathcal{Q}}$, is in a sense dual to the canonical involution in $TT\mathcal{Q}$, $\kappa_{\mathcal{Q}}: TT\mathcal{Q} \to TT\mathcal{Q}$ \cite{Godbillon}, which codifies the fact that for all $f \in C^{\infty}(\mathbb{R}^2, \mathcal{Q})$, its derivatives commute, i.e.
\begin{equation*}
\kappa_\mathcal{Q} \left( \left.\frac{\mathrm{d}}{\mathrm{d} s}\right\vert_{s = 0} \left( \left.\frac{\mathrm{d}}{\mathrm{d} t}\right\vert_{t = 0} f(t,s)\right) \right) = \left.\frac{\mathrm{d}}{\mathrm{d} t}\right\vert_{t = 0} \left( \left.\frac{\mathrm{d}}{\mathrm{d} s}\right\vert_{s = 0} f(t,s)\right).
\end{equation*}
In adapted local coordinates, if $(q,v,X_q,X_v) \in TT\mathcal{Q}$, then $\kappa_{\mathcal{Q}}(q,v,X_q,X_v) = (q,X_q,v,X_v)$. The latter, $\beta_{\mathcal{Q}}$, is provided by the action of the canonical symplectic form on $T^*\mathcal{Q}$.\\

In adapted local coordinates, Tulcyjew's diffeomorphisms correspond to simple rearrangements of coordinates, namely if $(q,\kappa,v_q,v_{\kappa}) \in TT^*\mathcal{Q}$, then
\begin{align*}
    \alpha_{\mathcal{Q}}(q,\kappa,v_q,v_{\kappa}) &= (q,v_q,v_{\kappa},\kappa) \in T^*T\mathcal{Q}\\
    \beta_{\mathcal{Q}}(q,\kappa,v_q,v_{\kappa}) &= (q,\kappa,-v_{\kappa},v_q) \in T^*T^*\mathcal{Q}
\end{align*}

Similar constructions can be carried out in the spaces where we have been developing our theory. However, since these are sums of vector bundles, these pose the additional difficulty of needing to map correctly into each summand. Developing this framework is the purpose of the rest of this section.

\begin{definition}
    Let
    \begin{equation*}
        TT^*\mathcal{Q} \oplus_{T\mathcal{Q}}^\alpha \mathcal{E} = \left\lbrace (V,U) \in TT^*\mathcal{Q} \times \mathcal{E} \;\vert\; \pi_{T\mathcal{Q}} \circ \alpha_{\mathcal{Q}} (V) = \pi^{\mathcal{E}}(U) \right\rbrace
    \end{equation*}
    We say $TT^*\mathcal{Q} \oplus_{T\mathcal{Q}}^\alpha \mathcal{E}$ is the $\alpha_\mathcal{Q}$-twisted sum of $TT^*\mathcal{Q}$ and $\mathcal{E}$.
\end{definition}

Analogously to a Whitney sum, given $W \in  TT^*\mathcal{Q} \oplus_{T\mathcal{Q}}^\alpha \mathcal{E}$, we naturally get the structural projections $\mathrm{pr}_1^{\alpha}(W) = V$ and $\mathrm{pr}_2^{\alpha}(W) = U$ provided by the Cartesian product structure that defines the space.\\

In local adapted coordinates, if $V = (q,\kappa,v_q,v_{\kappa}) \in TT^*\mathcal{Q}$ and $U = (q,v_q,u)$, then, we label the corresponding point in $TT^*\mathcal{Q} \oplus_{T\mathcal{Q}}^\alpha \mathcal{E}$ by $W = (q,\kappa,v_q,v_{\kappa},u)$. Thus, $\alpha_\mathcal{Q}(q,\kappa,v_q,v_{\kappa}) = (q,v_q,v_{\kappa},\kappa)$ and $\pi_{TQ}(q,v_q,v_{\kappa},\kappa) = (q,v_q) = \pi^{\mathcal{E}}(q,v_q,u)$.\\

This structure allows us to define the diffeomorphism $\alpha_\mathcal{Q}^{\mathcal{E}}: TT^*\mathcal{Q} \oplus_{T\mathcal{Q}}^\alpha \mathcal{E} \to T^*T\mathcal{Q} \oplus_{T\mathcal{Q}} \mathcal{E}$, the analogue of $\alpha_\mathcal{Q}$ extended to the sum of vector bundles, that makes the following diagram commutative.

\begin{center}
    \begin{tikzcd}
    TT^*\mathcal{Q} \oplus_{T\mathcal{Q}}^{\alpha} \mathcal{E} \arrow[rrrrdd, "\mathrm{pr}^{\alpha}_2", bend left] \arrow[rrdddddd, "\mathrm{pr}^{\alpha}_1"', bend right] \arrow[rrdd, "\alpha_{\mathcal{Q}}^\mathcal{E}", shift left] &  &                                                                                                                                                                                               &  &                                              \\
                                                                                                                                                                                                                                        &  &                                                                                                                                                                                               &  &                                              \\
                                                                                                                                                                                                                                        &  & T^*T{\mathcal{Q}} \oplus_{T\mathcal{Q}} \mathcal{E} \arrow[rr, "\mathrm{pr}_2"] \arrow[dd, "\mathrm{pr}_1"'] \arrow[dd] \arrow[lluu, "(\alpha_{\mathcal{Q}}^{\mathcal{E}})^{-1}", shift left] &  & \mathcal{E} \arrow[ddd, "\pi^{\mathcal{E}}"] \\
                                                                                                                                                                                                                                        &  &                                                                                                                                                                                               &  &                                              \\
                                                                                                                                                                                                                                        &  & T^*T\mathcal{Q} \arrow[dd, "\alpha^{-1}_{\mathcal{Q}}"', shift right] \arrow[rrd, "\pi_{T\mathcal{Q}}"]                                                                                       &  &                                              \\
                                                                                                                                                                                                                                        &  &                                                                                                                                                                                               &  & T\mathcal{Q}                                 \\
                                                                                                                                                                                                                                        &  & TT^*\mathcal{Q} \arrow[rru, "T\pi_{\mathcal{Q}}"'] \arrow[uu, "\alpha_{\mathcal{Q}}"', shift right]                                                                                           &  &                                             
    \end{tikzcd}
\end{center}

\begin{definition}
    Let
    \begin{equation*}
        T^*T^*\mathcal{Q} \oplus_{T\mathcal{Q}}^\beta \mathcal{E} = \left\lbrace (P,U) \in T^*T^*\mathcal{Q} \times \mathcal{E} \;\vert\; T\pi_{\mathcal{Q}} \circ \beta^{-1}_{\mathcal{Q}} (P) = \pi^{\mathcal{E}}(U) \right\rbrace
    \end{equation*}
    We say $T^*T^*\mathcal{Q} \oplus_{T\mathcal{Q}}^\beta \mathcal{E}$ is the $\beta_\mathcal{Q}$-twisted sum of $T^*T^*\mathcal{Q}$ and $\mathcal{E}$.
\end{definition}

Given $\Pi \in T^*T^*\mathcal{Q} \oplus_{T\mathcal{Q}}^\beta \mathcal{E}$, we denote the corresponding structural projections as $\mathrm{pr}_1^{\beta}(\Pi) = P$ and $\mathrm{pr}_2^{\beta}(\Pi) = U$. In local adapted coordinates, if $P = (q,\kappa,p_q,p_{\kappa}) \in T^*T^*\mathcal{Q}$ and $U = (q,v_q = p_{\kappa},u)$, then, we label the corresponding point in $TT\mathcal{Q} \oplus_{T\mathcal{Q}}^\kappa \mathcal{E}$ by $Y = (q,\kappa,p_q,p_{\kappa},u)$. Thus, $\beta^{-1}_\mathcal{Q}(q,\kappa,p_q,p_{\kappa}) = (q,\kappa,p_{\kappa},-p_{q})$ and $T\pi_{Q}(q,\kappa,p_{\kappa},-p_{q}) = (q,v_q = p_{\kappa}) = \pi^{\mathcal{E}}(q,v_q,u)$.\\

Similar to the $\alpha_{\mathcal{Q}}$-twisted case, this allows us to define the diffeomorphism $\beta_\mathcal{Q}^{\mathcal{E}}: TT^*\mathcal{Q} \oplus_{T\mathcal{Q}}^\alpha \mathcal{E} \to T^*T^*\mathcal{Q} \oplus_{T\mathcal{Q}}^{\beta} \mathcal{E}$, the analogue of $\beta_\mathcal{Q}$ extended to the sum of vector bundles, that makes the following diagram commutative.

\begin{center}
    \begin{tikzcd}
    T^*T^*\mathcal{Q} \oplus_{T\mathcal{Q}}^{\beta} \mathcal{E} \arrow[rrrrdd, "\mathrm{pr}^{\beta}_2", bend left] \arrow[rrdddddd, "\mathrm{pr}^{\beta}_1"', bend right] \arrow[rrdd, "(\beta_{\mathcal{Q}}^{\mathcal{E}})^{-1}", shift left] &  &                                                                                                                                                                                                       &  &                                             \\
                                                                                                                                                                                                                                               &  &                                                                                                                                                                                                       &  &                                             \\
                                                                                                                                                                                                                                               &  & TT^*{\mathcal{Q}} \oplus_{T\mathcal{Q}}^{\alpha} \mathcal{E} \arrow[rr, "\mathrm{pr}_2^{\alpha}"] \arrow[lluu, "\beta_{\mathcal{Q}}^{\mathcal{E}}", shift left] \arrow[dd, "\mathrm{pr}_1^{\alpha}"'] &  & \mathcal{E} \arrow[dd, "\pi^{\mathcal{E}}"] \\
                                                                                                                                                                                                                                               &  &                                                                                                                                                                                                       &  &                                             \\
                                                                                                                                                                                                                                               &  & TT^*\mathcal{Q} \arrow[dd, "\beta_\mathcal{Q}"', shift right] \arrow[rr, "T\pi_{\mathcal{Q}}"]                                                                                                        &  & T\mathcal{Q}                                \\
                                                                                                                                                                                                                                               &  &                                                                                                                                                                                                       &  &                                             \\
                                                                                                                                                                                                                                               &  & T^*T^*\mathcal{Q} \arrow[uu, "\beta_{\mathcal{Q}}^{-1}"', shift right]                                                                                                                                &  &                                            
    \end{tikzcd}
\end{center}

With these, we have managed to extend Tulczyjew's triple to the sum of vector bundles that naturally occurs in the case of optimal control of second-order systems. The situation can be summarized in the following diagram.

\begin{center}
    \begin{tikzcd}
                                                    &  & T^*T\mathcal{Q} \oplus_{T\mathcal{Q}} \mathcal{E} \arrow[dd, "\mathrm{pr}_1"'] \arrow[lldd, "\mathrm{pr}_2"'] &                                                 & TT^*\mathcal{Q} \oplus_{T\mathcal{Q}}^{\alpha} \mathcal{E} \arrow[dd, "\mathrm{pr}_1^{\alpha}"] \arrow[ll, "\alpha_{\mathcal{Q}}^{\mathcal{E}}"'] \arrow[rr, "\beta_{\mathcal{Q}}^{\mathcal{E}}"] \arrow[lllldd, "\mathrm{pr}_2^{\alpha}" near start] &                                                 & T^*T^*\mathcal{Q} \oplus_{T\mathcal{Q}}^{\beta} \mathcal{E} \arrow[dd, "\mathrm{pr}_1^{\beta}"] \arrow[lllllldd, "\mathrm{pr}_2^{\beta}" near start] \\
                                                    &  &                                                                                                               &                                                 &                                                                                                                                                                                                                                            &                                                 &                                                                                                                                           \\
    \mathcal{E} \arrow[rrrdd, "\pi^{\mathcal{E}}"'] &  & T^*T\mathcal{Q} \arrow[rdd, "\pi_{T\mathcal{Q}}"]                                                             &                                                 & TT^*\mathcal{Q} \arrow[ldd, "T\pi_{\mathcal{Q}}"'] \arrow[rdd, "\tau_{T^*\mathcal{Q}}"] \arrow[ll, "\alpha_{\mathcal{Q}}"'] \arrow[rr, "\beta_{\mathcal{Q}}"]                                                                              &                                                 & T^*T^*\mathcal{Q} \arrow[ldd, "\pi_{T^*\mathcal{Q}}"]                                                                                     \\
                                                    &  &                                                                                                               &                                                 &                                                                                                                                                                                                                                            &                                                 &                                                                                                                                           \\
                                                    &  &                                                                                                               & T\mathcal{Q} \arrow[rdd, "\tau_{\mathcal{Q}}"'] &                                                                                                                                                                                                                                            & T^*\mathcal{Q} \arrow[ldd, "\pi_{\mathcal{Q}}"] &                                                                                                                                           \\
                                                    &  &                                                                                                               &                                                 &                                                                                                                                                                                                                                            &                                                 &                                                                                                                                           \\
                                                    &  &                                                                                                               &                                                 & \mathcal{Q}                                                                                                                                                                                                                                &                                                 &                                                                                                                                          
    \end{tikzcd}
\end{center}

\begin{remark}
    In the case mentioned in Remark~\ref{rmk:controlled_SODE_lower_control_space}, where $\pi^{\check{\mathcal{E}}} : \check{\mathcal{E}} \to \mathcal{Q}$ exists, then, the previous constructions become unnecessary and Tulcyjew's diffeomorphisms extend trivially to $(T^*T\mathcal{Q} \oplus_\mathcal{Q} \check{\mathcal{E}},TT^*\mathcal{Q} \oplus_\mathcal{Q} \check{\mathcal{E}},T^*T^*\mathcal{Q} \oplus_\mathcal{Q} \check{\mathcal{E}})$.
\end{remark}

\subsection{Force-controlled Lagrangian and Hamiltonian systems}
Turning our attention to the reformulation in terms of force-controlled Lagrangian systems, we begin by providing the following

\begin{definition}
    Let
    \begin{equation*}
        TT\mathcal{Q} \oplus_{T\mathcal{Q}}^\kappa \mathcal{E} = \left\lbrace (X,U) \in TT\mathcal{Q} \times \mathcal{E} \;\vert\; \tau_{T\mathcal{Q}} \circ \kappa_{\mathcal{Q}} (X) = T\tau_{\mathcal{Q}} (X) = \pi^{\mathcal{E}}(U) \right\rbrace
    \end{equation*}
    We say $TT\mathcal{Q} \oplus_{T\mathcal{Q}}^\kappa \mathcal{E}$ is the $\kappa_\mathcal{Q}$-twisted sum of $TT\mathcal{Q}$ and $\mathcal{E}$.
\end{definition}

Given $Y \in TT\mathcal{Q} \oplus_{T\mathcal{Q}}^\kappa \mathcal{E}$, we denote the corresponding structural projections as $\mathrm{pr}_1^{\kappa}(Y) = X$ and $\mathrm{pr}_2^{\kappa}(Y) = U$. In local adapted coordinates, if $X = (q,\xi,v_q,v_{\xi}) \in TT\mathcal{Q}$ and $U = (q,v_q,u)$, then, we label the corresponding point in $TT\mathcal{Q} \oplus_{T\mathcal{Q}}^\kappa \mathcal{E}$ by $Y = (q,\xi,v_q,v_{\xi},u)$. Thus, $\kappa_\mathcal{Q}(q,\xi,v_q,v_{\xi}) = (q,v_q,\xi,v_{\xi})$ and $\tau_{TQ}(q,v_q,v_{\kappa},\kappa) = (q,v_q) = \pi^{\mathcal{E}}(q,v_q,u)$.\\

Similar to the $\alpha_{\mathcal{Q}}$-twisted case, this allows us to define the diffeomorphism $\kappa_\mathcal{Q}^{\mathcal{E}}: TT\mathcal{Q} \oplus_{T\mathcal{Q}}^\kappa \mathcal{E} \to TT\mathcal{Q} \oplus_{T\mathcal{Q}} \mathcal{E}$, the analogue of $\kappa_\mathcal{Q}$ extended to the sum of vector bundles, that makes the following diagram commutative. In contrast with $\kappa_\mathcal{Q}$, it is no longer an involution.


\begin{center}
        \begin{tikzcd}
        TT\mathcal{Q} \oplus_{T\mathcal{Q}}^{\kappa} \mathcal{E} \arrow[rrrrdd, "\mathrm{pr}^{\kappa}_2", bend left] \arrow[rrdddddd, "\mathrm{pr}^{\kappa}_1"', bend right] \arrow[rrdd, "\kappa_{\mathcal{Q}}^\mathcal{E}", shift left] &  &                                                                                                                                                                                             &  &                                              \\
                                                                                                                                                                                                                                          &  &                                                                                                                                                                                             &  &                                              \\
                                                                                                                                                                                                                                          &  & TT{\mathcal{Q}} \oplus_{T\mathcal{Q}} \mathcal{E} \arrow[rr, "\mathrm{pr}_2"] \arrow[dd, "\mathrm{pr}_1"'] \arrow[dd] \arrow[lluu, "(\kappa_{\mathcal{Q}}^{\mathcal{E}})^{-1}", shift left] &  & \mathcal{E} \arrow[ddd, "\pi^{\mathcal{E}}"] \\
                                                                                                                                                                                                                                          &  &                                                                                                                                                                                             &  &                                              \\
                                                                                                                                                                                                                                          &  & TT\mathcal{Q} \arrow[dd, "\kappa_\mathcal{Q}"', shift right] \arrow[rrd, "\tau_{T\mathcal{Q}}"]                                                                                             &  &                                              \\
                                                                                                                                                                                                                                          &  &                                                                                                                                                                                             &  & T\mathcal{Q}                                 \\
                                                                                                                                                                                                                                          &  & TT\mathcal{Q} \arrow[rru, "T\tau_{\mathcal{Q}}"'] \arrow[uu, "\kappa_{\mathcal{Q}}"', shift right]                                                                                          &  &                                             
        \end{tikzcd}
\end{center}


With this structure, the new control Lagrangian for the force-controlled Lagrangian system $(\mathcal{Q},\mathcal{E},L,f_L^{\mathcal{E}})$, can be rewritten as $\tilde{\mathcal{L}}_L^{\mathcal{E}} = \mathcal{L}_L^{\mathcal{E}} \circ \kappa_{\mathcal{Q}}^{\mathcal{E}}$, where $\mathcal{L}_L^\mathcal{E} = \left\langle \left( dL \circ \pi^{\mathcal{E}} + f_L^{\mathcal{E}} \right) \circ \mathrm{pr}_2 , \mathrm{pr}_1  \right\rangle_{T\mathcal{Q}} - C \circ \mathrm{pr}_2$ with $dL$ the exterior derivative of $L$, or, in local coordinates,
        \begin{equation*}
            \mathcal{L}^\mathcal{E}_L(q,v_q,\xi,v_{\xi},u) = \tilde{\mathcal{L}}_L^{\mathcal{E}}(q,\xi,v_q,v_{\xi},u)\,.
        \end{equation*}

The presence of a hyperregular force-controlled Lagrangian system induces isomorphisms
\begin{align*}
    \sharp_{L,f_L^{\mathcal{E}}} &: T^*T\mathcal{Q} \oplus_{T\mathcal{Q}} \mathcal{E} \to TT\mathcal{Q} \oplus_{T\mathcal{Q}} \mathcal{E}\\
    \flat_{L,f_L^{\mathcal{E}}}&: TT\mathcal{Q} \oplus_{T\mathcal{Q}} \mathcal{E} \to T^*T\mathcal{Q} \oplus_{T\mathcal{Q}} \mathcal{E}
\end{align*}
whose local coordinate form is
\begin{align*}
    &\sharp_{L,f_L^{\mathcal{E}}}( q, v_q, v_{\kappa}, \kappa, u)\\
    &\quad = ( q, v_q, \xi = \kappa (D_{2 2} L(q,v_q))^{-1}, v_{\xi} = v_{\kappa} (D_{2 2} L(q,v_q))^{-1} - \kappa (D_{2 2} L(q,v_q))^{-1} D_2 f_{L}^{\mathcal{E}}(q,v_q,u), u)\\
   &\flat_{L,f_L^{\mathcal{E}}}( q, v_q, \xi, v_{\xi}, u) \\
   &\quad = ( q, v_q, v_{\kappa} = v_{\xi} D_{2 2} L(q,v_q) + \xi D_2 f_{L}^{\mathcal{E}}(q,v_q,u), \kappa = \xi D_{2 2}L(q,v_q), u)
\end{align*}

With all of this, one can check that
\begin{align*}
    \tilde{\mathcal{L}}^{\mathcal{E}} \circ (\alpha_{\mathcal{Q}}^{\mathcal{E}})^{-1} \circ \flat_{L,f_L^{\mathcal{E}}} \circ \kappa_{\mathcal{Q}}^{\mathcal{E}} &= \tilde{\mathcal{L}}^{\mathcal{E}}_L + \text{total derivative,}\\
    \tilde{\mathcal{L}}_L^{\mathcal{E}} \circ (\kappa_{\mathcal{Q}}^{\mathcal{E}})^{-1} \circ \sharp_{L,f_L^{\mathcal{E}}} \circ \alpha_{\mathcal{Q}}^{\mathcal{E}} &= \tilde{\mathcal{L}}^{\mathcal{E}} + \text{total derivative,}
\end{align*}
meaning that, as expected, given a hyperregular force-controlled Lagrangian system, working with either $\tilde{\mathcal{L}}^{\mathcal{E}}$ and $\tilde{\mathcal{L}}^{\mathcal{E}}_L$ is equivalent. The total derivatives that appear provide suitable transformations of the new boundary costs that appear both in $\tilde{J}_3$ and $\tilde{J}_4$ so that the augmented objective functions coincide.\\

When moving to the new Hamiltonian picture for force-controlled Lagrangian systems, we need to define the following space.

\begin{definition}
    Let
    \begin{equation*}
        T^*T\mathcal{Q} \oplus_{T\mathcal{Q}}^{\alpha,L} \mathcal{E} = \left\lbrace (\Lambda,U) \in T^*T\mathcal{Q} \times \mathcal{E} \;\vert\; \mathbb{F}L^{-1} \circ \pi_{T\mathcal{Q}} \circ \alpha^{-1}_{\mathcal{Q}} (\Lambda) = \pi^{\mathcal{E}}(U) \right\rbrace
    \end{equation*}
    We say $TT^*\mathcal{Q} \oplus_{T\mathcal{Q}}^{\alpha,L} \mathcal{E}$ is the $(\alpha_\mathcal{Q},L)$-twisted sum of $T^*T\mathcal{Q}$ and $\mathcal{E}$.
\end{definition}

Given $\mathrm{M} \in T^*T\mathcal{Q} \oplus_{T\mathcal{Q}}^{\alpha,L} \mathcal{E}$, we denote the corresponding structural projections as $\mathrm{pr}_1^{\alpha, L}(\mathrm{M}) = \Lambda$ and $\mathrm{pr}_2^{\alpha, L}(\mathrm{M}) = U$. In local adapted coordinates, if $\Lambda = (q,\xi,\varpi_q,\varpi_{\xi}) \in T^*T\mathcal{Q}$ and $U = (q,v_q,u)$, with $\varpi_{\xi}^{\top} = D_2 L(q,v_q)$, then, we may label the corresponding point in $T^*T\mathcal{Q} \oplus_{T\mathcal{Q}}^{\alpha,L} \mathcal{E}$ by $\mathrm{M} = (q,\xi,\varpi_q,\varpi_{\xi},u)$. Clearly, given the dependence of this definition on $L$, this space is not canonical.\\

In Section \ref{ssec:OptimalControlLagrangian} we also considered the case where our dynamics was given by a force-controlled Hamiltonian system with $H: T^*\mathcal{Q} \to \mathbb{R}$. There, the control space was assumed to be $(\mathcal{F},\pi^{\mathcal{F}},T^*\mathcal{Q})$. Tulczyjew's triple extends similarly to the corresponding sum of vector bundles. In particular, since the control fibers are over $T^*\mathcal{Q}$ we can work directly with $TT^*\mathcal{Q} \oplus_{T^*\mathcal{Q}} \mathcal{F}$ and $T^*T^*\mathcal{Q} \oplus_{T^*\mathcal{Q}} \mathcal{F}$. These can be related by extending $\beta_{\mathcal{Q}}$ trivially to the sum requiring the following diagram to commute.

\begin{center}
    \begin{tikzcd}
    TT^*\mathcal{Q} \oplus_{T^*\mathcal{Q}} \mathcal{F} \arrow[dd, "\mathrm{pr}_1"'] \arrow[rr, "\tilde{\beta}_{\mathcal{Q}}^{\mathcal{F}}"] \arrow[rd, "\mathrm{pr}_2"] &             & T^*T^*\mathcal{Q} \oplus_{T^*\mathcal{Q}} \mathcal{F} \arrow[dd, "\mathrm{pr}_1"] \arrow[ld, "\mathrm{pr}_2"'] \\
                                                                                                                                                                         & \mathcal{F} &                                                                                                                \\
    TT^*\mathcal{Q} \arrow[rr, "\beta_{\mathcal{Q}}"]                                                                                                                    &             & T^*T^*\mathcal{Q}                                                                                             
    \end{tikzcd}
\end{center}

Thus, in this case, we only need the following
\begin{definition}
    Let
    \begin{equation*}
        T^*T\mathcal{Q} \oplus_{T^*\mathcal{Q}}^{\tilde{\alpha}} \mathcal{F} = \left\lbrace (\mathrm{H},W) \in T^*T\mathcal{Q} \times \mathcal{F} \;\vert\; \tau_{T^*\mathcal{Q}} \circ \alpha^{-1}_{\mathcal{Q}} (\mathrm{H}) = \pi^{\mathcal{F}}(W) \right\rbrace
    \end{equation*}
    We say $T^*T\mathcal{Q} \oplus_{T^*\mathcal{Q}}^{\tilde{\alpha}} \mathcal{F}$ is the $\alpha_{\mathcal{Q}}$-twisted sum of $T^*T\mathcal{Q}$ and $\mathcal{F}$.
\end{definition}

Given $\Xi \in T^*T\mathcal{Q} \oplus_{T^*\mathcal{Q}}^{\tilde{\alpha}} \mathcal{F}$, we denote the corresponding structural projections as $\mathrm{pr}_1^{\tilde{\alpha}}(\Xi) = \mathrm{H}$ and $\mathrm{pr}_2^{\tilde{\alpha}}(\Xi) = W$. 
In local adapted coordinates, if $\mathrm{H} = (q,\xi,\varpi_{q},\varpi_{\xi}) \in T^*T\mathcal{Q}$ and $W = (q,\varpi_{\xi},w)$, then, we label the corresponding point in $T^*T\mathcal{Q} \oplus_{T^*\mathcal{Q}}^{\tilde{\alpha}} \mathcal{F}$ by $\Xi = (q,\xi,\varpi_{q},\varpi_{\xi},w)$.\\

This structure allows us to define the diffeomorphism $\tilde{\alpha}_\mathcal{Q}^{\mathcal{F}}: TT^*\mathcal{Q} \oplus_{T^*\mathcal{Q}} \mathcal{F} \to T^*T\mathcal{Q} \oplus_{T^*\mathcal{Q}}^{\tilde{\alpha}} \mathcal{F}$, that makes the following diagram commute.

\begin{center}
    \begin{tikzcd}
    T^*T\mathcal{Q} \oplus_{T^*\mathcal{Q}}^{\tilde{\alpha}} \mathcal{F} \arrow[rrrrdd, "\mathrm{pr}^{\tilde{\alpha}}_2", bend left] \arrow[rrdddddd, "\mathrm{pr}^{\tilde{\alpha}}_1"', bend right] \arrow[rrdd, "(\tilde{\alpha}_{\mathcal{Q}}^{\mathcal{F}})^{-1}", shift left] &  &                                                                                                                                                                                                &  &                                             \\
                                                                                                                                                                                                                                                                                   &  &                                                                                                                                                                                                &  &                                             \\
                                                                                                                                                                                                                                                                                   &  & TT^*{\mathcal{Q}} \oplus_{T^*\mathcal{Q}} \mathcal{F} \arrow[rr, "\mathrm{pr}_2"] \arrow[dd, "\mathrm{pr}_1"'] \arrow[dd] \arrow[lluu, "\tilde{\alpha}_{\mathcal{Q}}^\mathcal{F}", shift left] &  & \mathcal{F} \arrow[dd, "\pi^{\mathcal{F}}"] \\
                                                                                                                                                                                                                                                                                   &  &                                                                                                                                                                                                &  &                                             \\
                                                                                                                                                                                                                                                                                   &  & TT^*\mathcal{Q} \arrow[dd, "\alpha_{\mathcal{Q}}"', shift right] \arrow[rr, "\tau_{T^*\mathcal{Q}}"]                                                                                           &  & T^*\mathcal{Q}                              \\
                                                                                                                                                                                                                                                                                   &  &                                                                                                                                                                                                &  &                                             \\
                                                                                                                                                                                                                                                                                   &  & TT^*\mathcal{Q} \arrow[uu, "\alpha_{\mathcal{Q}}^{-1}"', shift right]                                                                                                                          &  &                                            
    \end{tikzcd}
\end{center}

In Section \ref{ssec:OptimalControlLagrangian} we already mentioned the possibility of having a vector bundle isomorphism over $\mathbb{F}L$, $\chi_{\mathbb{F}L}: \mathcal{E} \to \mathcal{F}$, relating both control bundles. One can then define a sum of vector bundles extension, $\chi_{\mathbb{F}L}^{\oplus}: T^*T\mathcal{Q} \oplus_{T\mathcal{Q}}^{\alpha,L} \mathcal{E} \to T^*T\mathcal{Q} \oplus_{T^*\mathcal{Q}}^{\tilde{\alpha}} \mathcal{F}$, by requiring that the following diagram commutes.
\begin{center}
    \begin{tikzcd}
    {T^*T\mathcal{Q} \oplus_{T\mathcal{Q}}^{\alpha,L} \mathcal{E}} \arrow[dd, "{\mathrm{pr}_2^{\alpha,L}}"'] \arrow[rr, "\chi_{\mathbb{F}L}^{\oplus}"] \arrow[rd, "{\mathrm{pr}_1^{\alpha,L}}"] &                 & T^*T\mathcal{Q} \oplus_{T^*\mathcal{Q}}^{\tilde{\alpha}} \mathcal{F} \arrow[dd, "\mathrm{pr}_{2}^{\tilde{\alpha}}"] \arrow[ld, "\mathrm{pr}_1^{\tilde{\alpha}}"'] \\
                                                                                                                                                                                                & T^*T\mathcal{Q} &                                                                                                                                                                   \\
    \mathcal{E} \arrow[rr, "\chi_{\mathbb{F}L}"]                                                                                                                                                &                 & \mathcal{F}                                                                                                                                                      
    \end{tikzcd}
\end{center}

All of this provides us with the following extended Tulczyjew's triple in the case of optimal control of force-controlled Lagrangian systems.

\begin{center}
    \begin{tikzcd}[column sep=0.85em]
    TT\mathcal{Q} \oplus_{T\mathcal{Q}}^{\kappa} \mathcal{E} \arrow[rr, "\mathbb{F} \tilde{\mathcal{L}}_L^{\mathcal{E}}"] \arrow[rrd, "\mathrm{pr}_2^{\kappa}"] \arrow[rrdd, "\mathrm{pr}_1^{\kappa}"'] &  & {T^*T\mathcal{Q} \oplus_{T\mathcal{Q}}^{\alpha,L} \mathcal{E}} \arrow[rr, "\chi_{\mathbb{F}L}^{\oplus}"] \arrow[d, "{\mathrm{pr}_2^{\alpha,L}}"] \arrow[rrdd, "{\mathrm{pr}_1^{\alpha,L}}" near start] &  & T^*T\mathcal{Q} \oplus_{T^*\mathcal{Q}}^{\tilde{\alpha}} \mathcal{F} \arrow[dd, "\mathrm{pr}_1^{\tilde{\alpha}}"' near start] \arrow[rd, "\mathrm{pr}_2^{\tilde{\alpha}}"] &                                                                           & TT^*\mathcal{Q} \oplus_{T^*\mathcal{Q}} \mathcal{F} \arrow[dd, "\mathrm{pr}_1"] \arrow[ll, "\tilde{\alpha}_{\mathcal{Q}}^{\mathcal{F}}"'] \arrow[rr, "\tilde{\beta}_{\mathcal{Q}}^{\mathcal{F}}"] \arrow[ld, "\mathrm{pr}_2"'] &                                                 & T^*T^*\mathcal{Q} \oplus_{T^*\mathcal{Q}} \mathcal{F} \arrow[dd, "\mathrm{pr}_1"] \arrow[llld, "\mathrm{pr}_2"] \\
                                                                                                                                                                                                        &  & \mathcal{E} \arrow[rrr, "\chi_{\mathbb{F}L}"] \arrow[rrrddd, "\pi^{\mathcal{E}}"']                                                                                                           &  &                                                                                                                                                                 & \mathcal{F} \arrow[rrddd, "\pi^\mathcal{F}"' near end]                             &                                                                                                                                                                                                                                &                                                 &                                                                                                                 \\
                                                                                                                                                                                                        &  & TT\mathcal{Q} \arrow[rrrdd, "T\tau_{\mathcal{Q}}"'] \arrow[rr, "\mathbb{F}\tilde{\mathcal{L}}_L" near start]                                                                                            &  & T^*T\mathcal{Q} \arrow[rdd, "\pi_{T\mathcal{Q}}"]                                                                                                               &                                                                           & TT^*\mathcal{Q} \arrow[ldd, "T\pi_{\mathcal{Q}}"'] \arrow[rdd, "\tau_{T^*\mathcal{Q}}"] \arrow[ll, "\alpha_{\mathcal{Q}}"'] \arrow[rr, "\beta_{\mathcal{Q}}"]                                                                  &                                                 & T^*T^*\mathcal{Q} \arrow[ldd, "\pi_{T^*\mathcal{Q}}"]                                                           \\
                                                                                                                                                                                                        &  &                                                                                                                                                                                              &  &                                                                                                                                                                 &                                                                           &                                                                                                                                                                                                                                &                                                 &                                                                                                                 \\
                                                                                                                                                                                                        &  &                                                                                                                                                                                              &  &                                                                                                                                                                 & T\mathcal{Q} \arrow[rdd, "\tau_{\mathcal{Q}}"'] \arrow[rr, "\mathbb{F}L"] &                                                                                                                                                                                                                                & T^*\mathcal{Q} \arrow[ldd, "\pi_{\mathcal{Q}}"] &                                                                                                                 \\
                                                                                                                                                                                                        &  &                                                                                                                                                                                              &  &                                                                                                                                                                 &                                                                           &                                                                                                                                                                                                                                &                                                 &                                                                                                                 \\
                                                                                                                                                                                                        &  &                                                                                                                                                                                              &  &                                                                                                                                                                 &                                                                           & \mathcal{Q}                                                                                                                                                                                                                    &                                                 &                                                                                                                
    \end{tikzcd}
\end{center}

\subsection{Relationship with Pontryagin's Hamiltonian}
With these definitions at hand, we can state the following
\begin{theorem}
    \label{thm:relation_hamiltonian_new_functions}
    Consider a control Hamiltonian $\mathcal{H}_{\lambda_0}: T^*T\mathcal{Q} \oplus_{T\mathcal{Q}} \mathcal{E} \to \mathbb{R}$ (see Eq.~\eqref{eq:Pontryagins_control_Hamiltonian}) for a controlled SODE $X$ with runnning cost $C: \mathcal{E} \to \mathbb{R}$. Then,
    \begin{align*}
        \tilde{\mathcal{L}}^\mathcal{E} &= \hphantom{-}\mathcal{H}_{-1} \circ \alpha_{\mathcal{Q}}^{\mathcal{E}}\,,\\
        \tilde{\mathcal{H}}^\mathcal{E} &= -\mathcal{H}_{-1} \circ \alpha_{\mathcal{Q}}^{\mathcal{E}} \circ (\beta_{\mathcal{Q}}^{\mathcal{E}})^{-1}\,.
    \end{align*}
\end{theorem}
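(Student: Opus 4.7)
The statement amounts to two coordinate identities lifted to the global language of the extended Tulczyjew maps. My plan is to verify them by working in the local adapted coordinates introduced in Section~\ref{sec:tulcyjew_and_PMP} and then invoking that these coordinates were defined precisely so that the twisted sums and the maps $\alpha_\mathcal{Q}^{\mathcal{E}},\beta_\mathcal{Q}^{\mathcal{E}}$ are the trivial extensions of $\alpha_\mathcal{Q},\beta_\mathcal{Q}$ acting on the first summand while leaving the control factor untouched; equality in every adapted chart then gives equality of the globally defined functions.

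For the first identity I would take a generic point $(q,\kappa,v_q,v_\kappa,u)\in TT^*\mathcal{Q}\oplus_{T\mathcal{Q}}^{\alpha}\mathcal{E}$, apply $\alpha_\mathcal{Q}^{\mathcal{E}}$ to obtain $(q,v_q,v_\kappa,\kappa,u)\in T^*T\mathcal{Q}\oplus_{T\mathcal{Q}}\mathcal{E}$ by the local expression of $\alpha_\mathcal{Q}$, and substitute into \eqref{eq:Pontryagins_control_Hamiltonian} with $\lambda_0=-1$, identifying $v\leftrightarrow v_q$, $\lambda_q\leftrightarrow v_\kappa$, $\lambda_v\leftrightarrow\kappa$. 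Using that $X=v\,\partial_q+X_v(q,v,u)\,\partial_v$, the pairing yields $v_\kappa^\top v_q+\kappa^\top X_v(q,v_q,u)-C(q,v_q,u)$, which is precisely $\tilde{\mathcal{L}}^{\mathcal{E}}$ from Definition~\ref{def:new_control_Lagrangian}.

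For the second identity I would first invert $\beta_\mathcal{Q}$ in coordinates: since $\beta_\mathcal{Q}(q,\kappa,v_q,v_\kappa)=(q,\kappa,-v_\kappa,v_q)$, solving gives $\beta_\mathcal{Q}^{-1}(q,\kappa,p_q,p_\kappa)=(q,\kappa,p_\kappa,-p_q)$, and the extended map reads $(\beta_\mathcal{Q}^{\mathcal{E}})^{-1}(q,\kappa,p_q,p_\kappa,u)=(q,\kappa,p_\kappa,-p_q,u)$. Composing with $\alpha_\mathcal{Q}^{\mathcal{E}}$ yields $(q,p_\kappa,-p_q,\kappa,u)$. Plugging into $\mathcal{H}_{-1}$ and negating gives $p_q^\top p_\kappa-\kappa^\top X_v(q,p_\kappa,u)+C(q,p_\kappa,u)$, matching $\tilde{\mathcal{H}}^{\mathcal{E}}$ in \eqref{eq:new.control.Hamiltonian}. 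Alternatively, one can observe that the second identity follows from the first by composing with $(\beta_\mathcal{Q}^{\mathcal{E}})^{-1}$ and recognising that $\tilde{\mathcal{H}}^{\mathcal{E}}$, being the Legendre transform of the hyperregular Lagrangian $\tilde{\mathcal{L}}^{\mathcal{E}}$, is obtained by transporting it along $\mathbb{F}\tilde{\mathcal{L}}^{\mathcal{E}}\circ(\beta_\mathcal{Q}^{\mathcal{E}})^{-1}$ up to the sign convention inherited from $\beta_\mathcal{Q}$.

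There is no real obstacle here: both claims reduce to a direct symbolic check once the coordinate forms of $\alpha_\mathcal{Q}^{\mathcal{E}}$ and $\beta_\mathcal{Q}^{\mathcal{E}}$ are in hand. The one subtle point I would take care with is tracking signs through $\beta_\mathcal{Q}^{-1}$ and confirming that the control-factor equality in the fibred product descends unambiguously because $\alpha_\mathcal{Q}^{\mathcal{E}}$ intertwines $\mathrm{pr}_2^{\alpha}$ with $\mathrm{pr}_2$ and $(\beta_\mathcal{Q}^{\mathcal{E}})^{-1}$ intertwines $\mathrm{pr}_2^{\beta}$ with $\mathrm{pr}_2^{\alpha}$, as codified in the commutative diagrams preceding the theorem. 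Once charts are consistent on overlaps (which is automatic since $\alpha_\mathcal{Q},\beta_\mathcal{Q}$ are intrinsic), the coordinate coincidence upgrades to equality of globally defined functions.
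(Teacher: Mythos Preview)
Your proposal is correct and follows exactly the approach taken in the paper: the paper's proof consists of the single sentence ``It suffices to check these locally in an adapted coordinate system,'' and you have carried out precisely that local verification, tracking the coordinate expressions of $\alpha_{\mathcal{Q}}^{\mathcal{E}}$ and $(\beta_{\mathcal{Q}}^{\mathcal{E}})^{-1}$ through to the definitions of $\tilde{\mathcal{L}}^{\mathcal{E}}$ and $\tilde{\mathcal{H}}^{\mathcal{E}}$.
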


\begin{proof}
    It suffices to check these locally in an adapted coordinate system.
\end{proof}

\begin{remark}
    These suggest that we may as well extend our definitions of $\tilde{\mathcal{L}}^\mathcal{E}$ and $\tilde{\mathcal{H}}^\mathcal{E}$ to arbitrary $\lambda_0$.
\end{remark}

We have omitted up until this point the fact that a big part of the importance of Tulczyjew's triple in geometric mechanics comes from the fact that each of the double bundles involved is a symplectic manifold, namely $(T^*T\mathcal{Q}, \omega_{T\mathcal{Q}})$ and $(T^*T^*\mathcal{Q}, \omega_{T^*\mathcal{Q}})$ are naturally symplectic. Moreover, $(TT^*\mathcal{Q}, \omega_{\alpha} = -\omega_{\beta})$ is also a symplectic manifold with $\omega_{\alpha} = \alpha_{\mathcal{Q}}^* \, \omega_{T\mathcal{Q}}$ and $\omega_{\beta} = \beta_{\mathcal{Q}}^* \, \omega_{T^*\mathcal{Q}}$, making $\alpha_{\mathcal{Q}}$ and $\beta_{\mathcal{Q}}$ symplectomorphisms (the latter actually an anti-symplectomorphism).\\

In our extended setting, what we find is that $(T^*T\mathcal{Q} \oplus_{T\mathcal{Q}} \mathcal{E}, \omega_{T\mathcal{Q}}^{\mathcal{E}} = \mathrm{pr}_1^*\, \omega_{T\mathcal{Q}})$ and $(T^*T^*\mathcal{Q} \oplus_{T\mathcal{Q}}^{\beta} \mathcal{E}, \omega_{T^*\mathcal{Q}}^{\mathcal{E}} = (\mathrm{pr}_1^{\beta})^* \, \omega_{T^*\mathcal{Q}})$ are presymplectic manifolds and $(TT^*\mathcal{Q} \oplus_{T\mathcal{Q}}^{\alpha} \mathcal{E}, \omega_{\alpha}^{\mathcal{E}} = -\omega_{\beta}^{\mathcal{E}})$ with $\omega_{\alpha}^{\mathcal{E}} = (\alpha_{\mathcal{Q}}^{\mathcal{E}})^* \, \omega_{T\mathcal{Q}}^{\mathcal{E}}$ and $\omega_{\beta}^{\mathcal{E}} = (\beta_{\mathcal{Q}}^{\mathcal{E}})^* \, \omega_{T^*\mathcal{Q}}^{\mathcal{E}}$ is also presymplectic. Locally, with our choices of coordinates we have that
\begin{align*}
	\omega_{T\mathcal{Q}}^\mathcal{E} &= d q^i \wedge d \lambda_{q,i} + d v^i \wedge d \lambda_{v,i}\,,\\
	\omega_{T^*\mathcal{Q}}^\mathcal{E} &= d q^i \wedge d p_{q,i} + d \kappa_i \wedge d p_{\kappa}^i\,,\\
	\omega_{\alpha}^\mathcal{E} &= d q^i  \wedge d v_{\kappa,i} + d v_q^i \wedge d\kappa _i\,.
\end{align*}

The necessary conditions for optimality (modulo transversality conditions) stemming from \eqref{eq:generic_augmented_objective} can be recast in the following compact geometric form:
\begin{equation*}
\imath_{X_{\mathcal{H}}^\mathcal{E}} \omega_{T\mathcal{Q}}^\mathcal{E} = d\mathcal{H}_{-1}\,.
\end{equation*}

As a direct result from Theorem~\ref{thm:relation_hamiltonian_new_functions}, if $X_{\alpha}^\mathcal{E} =(\alpha_{\mathcal{Q}}^\mathcal{E})_*^{-1} X_{\mathcal{H}}^\mathcal{E}$ and $X_{\hat{\mathcal{H}}}^\mathcal{E} = (\beta_{\mathcal{Q}}^\mathcal{E})_* (\alpha_{\mathcal{Q}}^\mathcal{E})_*^{-1} X_{\mathcal{H}}^\mathcal{E}$, then, 
\begin{align*}
    \imath_{X_{\alpha}^\mathcal{E}} \omega_{\alpha}^{\mathcal{E}} &= d\tilde{\mathcal{L}}^\mathcal{E}\,,\\
	\imath_{X_{\tilde{\mathcal{H}}}^\mathcal{E}} \omega_{T^*\mathcal{Q}}^\mathcal{E} &= d\tilde{\mathcal{H}}^\mathcal{E}\,.
\end{align*}

The first equation is quite surprising, particularly in light of \eqref{eq:EL_geometric}. Indeed, $X_{\alpha}^{\mathcal{E}}$ is not vector field generated by the Euler-Lagrange equations derived from $\tilde{J}_3$. In order to actually regain those, one needs to proceed to construct the Poincar{\'e}-Cartan 2-form either from the canonical structure of a tangent bundle or, equivalently, by pullback of $\omega_{T^*\mathcal{Q}}^{\mathcal{E}}$ through the fiber derivative, following the same procedure as in Section~\ref{sssec:force_controlled EL_eqs}, leading to $\omega_{\tilde{\mathcal{L}}^{\mathcal{E}}} = (\mathbb{F}\tilde{L}^{\mathcal{E}})^* \omega_{T^*\mathcal{Q}}^{\mathcal{E}}$. By the hyperregularity of the new Lagrangian, it is yet another presymplectic form in $TT^*\mathcal{Q} \oplus_{T\mathcal{Q}}^{\alpha} \mathcal{E}$. Actually,
\begin{equation*}
    \omega_{\tilde{\mathcal{L}}^{\mathcal{E}}} = \omega_{\alpha}^{\mathcal{E}} + \zeta
\end{equation*}
where $\zeta$ is another exact form. Then, the necessary conditions for optimality for $\tilde{J}_3$ can be rewritten as
\begin{equation*}
    \imath_{X_{\tilde{\mathcal{L}}}^\mathcal{E}} \omega_{\tilde{\mathcal{L}}^{\mathcal{E}}} = dE_{\tilde{\mathcal{L}}^\mathcal{E}}\,.
\end{equation*}
Here, the vector field $X_{\tilde{\mathcal{L}}}^\mathcal{E} \neq  X_{\alpha}^{\mathcal{E}}$ is the one that corresponds to the Euler-Lagrange equations. In local adapted coordinates, we have
\begin{align*}
    X_{\alpha}^{\mathcal{E}} &= v_q \partial_{q}
    + \left( D_2 C(q, v_q, u) - v_{\kappa}^{\top} - \kappa^{\top} D_2 f(q, v_q, u) \right) \partial_{\kappa}\\
    &+ f(q, v_q, u) \partial_{v_{q}}
    + \left( D_1 C(q, v_q, u) - \kappa^{\top} D_1 f(q, v_q, u)\right) \partial_{v_{\kappa}} + X_u \partial_u\,,\\
    X_{\tilde{\mathcal{L}}}^{\mathcal{E}} &= v_q \partial_{q}
    + v_{\kappa} \partial_{\kappa} + f(q, v_q, u) \partial_{v_{q}}\\
    &+ \left[ \left( D_{21} C - \kappa^{\top} D_{2 1} f \right) v_q + \left( D_{22} C - \kappa^{\top} D_{2 2} f \right) f + \left( D_{23} C - \kappa^{\top} D_{2 3} f \right) X_u - \left( D_1 C - \kappa^{\top} D_1 f\right)  \right] \partial_{v_{\kappa}}\\
    &+ X_u \partial_u\,.
\end{align*}
In the former, $v_{\kappa}$ does not play the role of the time derivative of $\kappa$. It is simply a fiber coordinate over $\kappa$. This can be understood once we realize that $v_{\kappa}$ is the image of $\lambda_q$ in $TT^*\mathcal{Q}$. In the latter, $v_{\kappa}$ does indeed play the role of derivative of $\kappa$, with $X_{\tilde{\mathcal{L}}}^{\mathcal{E}}$ being a controlled SODE.\\

Whenever $\bar{u}$ is an optimal control, our sum of vector bundles collapse into the original Tulczyjew's triple and the resulting vector fields associated to the corresponding optimal control Hamiltonian, optimal new Lagrangian and optimal new Hamiltonian, $X_{\bar{\mathcal{H}}}$ (with $\bar{\mathcal{H}}_{-1}$ denoting the optimal Pontryagin's control Hamiltonian), $X_{\tilde{\mathcal{L}}}$ and $X_{\tilde{\mathcal{H}}}$ respectively, become Hamiltonian vector fields and their flows conserve the respective symplectic forms.

\begin{remark}
    Tulczyjew's triple in mechanics provides an invariant way to understand the relation between Lagrangian and Hamiltonian mechanics. More precisely, given hyperregular $L$ and $H$, $\mathrm{im}\, dL \subset T^*T\mathcal{Q}$, $\mathrm{im}\, dH \subset T^*T^*\mathcal{Q}$ and $\mathrm{im} \, X_H \subset TT^*\mathcal{Q}$ define \emph{so-called} Lagrangian submanifolds \cite{Maslov65,Weinstein71} of the corresponding spaces which are related by Tulczyjew's isomorphisms. A similar analysis can be carried out in our setting, with $\mathrm{im} \, d \bar{\mathcal{H}}_{-1} \subset T^*T^*T\mathcal{Q}$, $\mathrm{im} \, d \tilde{\mathcal{L}} \subset T^*TT^*\mathcal{Q}$ and so on, but this is beyond the scope of this publication.
\end{remark}

To end this section, let us go back once more to the case of force-controlled Lagrangian and Hamiltonian systems. Notice that with all the structure introduced in the previous section, we can construct 
\begin{equation*}
    \begin{array}{rcccc}
         \tilde{\chi}_{\mathbb{F}L}^{1} = \tilde{\beta}_{\mathcal{Q}}^{\mathcal{F}}\circ (\tilde{\alpha}_{\mathcal{Q}}^{\mathcal{F}})^{-1} \circ \chi_{\mathbb{F}L}^{\oplus} &:& T^*T\mathcal{Q} \oplus_{T\mathcal{Q}}^{\alpha,L} \mathcal{E} &\to& T^*T^*\mathcal{Q} \oplus_{T^*\mathcal{Q}} \mathcal{F}\\
         && (q,\xi,\varpi_{q},\varpi_{\xi},u) &\to& (q, \varpi_{\xi},-\varpi_q,\chi(q,D_2 H(q, \varpi_{\xi}))u)
    \end{array}
\end{equation*}
This can be regarded as a partial cotangent lift in the first argument of the sum. This affords us the following analogue of Theorem \ref{thm:relation_hamiltonian_new_functions}.
\begin{theorem}
    Let $(\mathcal{Q},\mathcal{E},L,f_L^{\mathcal{E}})$ be a hyperregular force-controlled Lagrangian system with $\tilde{\mathcal{L}}_L^{\mathcal{E}}$, $\tilde{\mathcal{H}}_L^{\mathcal{E}}$, its new control Lagrangian and Hamiltonian respectively, for a running cost $C:\mathcal{E} \to \mathbb{R}$. Let $(T^*\mathcal{Q},\mathcal{F},H,f_L^{\mathcal{F}})$ be its corresponding associated force-controlled Hamiltonian system through a vector bundle morphism $\chi_{\mathbb{F}L}$ over $\mathbb{F}L$. Finally, let $C_H:\mathcal{F} \to \mathbb{R}$ be its associated running cost and $\mathcal{H}_{\lambda_0}: T^*T^*\mathcal{Q} \oplus_{T \mathcal{Q}} \mathcal{F} \to \mathbb{R}$ the Pontryagin's control Hamiltonian for the force-controlled Hamiltonian system. Then,
    \begin{itemize}
        \item $\tilde{\mathcal{L}}_L^{\mathcal{E}} = \hphantom{-}\mathcal{H}_{-1} \circ \tilde{\chi}_{\mathbb{F}L}^{1} \circ \mathbb{F}\tilde{\mathcal{L}}_{L}^{\mathcal{E}}$,
        \item $\tilde{\mathcal{H}}_L^{\mathcal{E}} = -\mathcal{H}_{-1} \circ \tilde{\chi}_{\mathbb{F}L}^{1}$.
    \end{itemize}
\end{theorem}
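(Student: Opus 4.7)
The approach I propose mirrors the proof of Theorem \ref{thm:relation_hamiltonian_new_functions}: both identities reduce to direct verifications in local adapted coordinates, leveraging the explicit formulas for each map involved. The whole structure has been carefully engineered in Section~\ref{sec:tulcyjew_and_PMP} precisely so that these become componentwise checks.

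The first step is to unpack $\tilde{\chi}_{\mathbb{F}L}^{1} = \tilde{\beta}_{\mathcal{Q}}^{\mathcal{F}} \circ (\tilde{\alpha}_{\mathcal{Q}}^{\mathcal{F}})^{-1} \circ \chi_{\mathbb{F}L}^{\oplus}$ in coordinates. Chaining together $\alpha_{\mathcal{Q}}^{-1}(q,\xi,\varpi_q,\varpi_\xi) = (q,\varpi_\xi,\xi,\varpi_q)$ and $\beta_{\mathcal{Q}}(q,\kappa,v_q,v_\kappa) = (q,\kappa,-v_\kappa,v_q)$, together with the definition of $\chi_{\mathbb{F}L}^{\oplus}$, yields, on $T^{*}T^{*}\mathcal{Q} \oplus_{T^{*}\mathcal{Q}} \mathcal{F}$ with coordinates $(q,p_q,\lambda_q,\lambda_p,w)$,
\begin{equation*}
\tilde{\chi}_{\mathbb{F}L}^{1}(q,\xi,\varpi_q,\varpi_\xi,u) = \bigl(q,\,\varpi_\xi,\,-\varpi_q,\,\xi,\,\chi(q,D_{2}H(q,\varpi_\xi))\,u\bigr)\,,
\end{equation*}
which simply re-encodes the identifications $p_q = \varpi_\xi$, $\lambda_q = -\varpi_q$, $\lambda_p = \xi$ already anticipated in the discussion surrounding Eq.~\eqref{eq:new_old_Hamiltonian}.

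For the second identity, I would substitute this into the Pontryagin control Hamiltonian for $(T^{*}\mathcal{Q},\mathcal{F},H,f_H^{\mathcal{F}})$ and invoke the hyperregular Legendre-transform identities $D_2 H(q,p_q) = v_q(q,p_q)$, $D_1 H(q,p_q) = -D_1 L(q,v_q(q,p_q))$, together with $C_H = C \circ \chi_{\mathbb{F}L}^{-1}$ and $f_H^{\mathcal{F}} = f_L^{\mathcal{E}} \circ \chi_{\mathbb{F}L}^{-1}$. After a line of algebra, one gets $\mathcal{H}_{-1}\circ \tilde{\chi}_{\mathbb{F}L}^{1} = -\varpi_q^{\top} v_q(q,\varpi_\xi) + \xi^{\top}[D_1 L + f_L^{\mathcal{E}}] - C$, which is precisely $-\tilde{\mathcal{H}}_L^{\mathcal{E}}$ by its defining formula. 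This repackages the matching already exhibited in the text just after Eq.~\eqref{eq:new_old_Hamiltonian}.

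For the first identity, I would then compose the expression for $\mathcal{H}_{-1}\circ \tilde{\chi}_{\mathbb{F}L}^{1}$ with $\mathbb{F}\tilde{\mathcal{L}}_{L}^{\mathcal{E}}$, plugging in $\varpi_q^{\top} = D_{22}L\,v_\xi + (D_{12}L + D_2 f_L^{\mathcal{E}})\,\xi - D_2 C$ and $\varpi_\xi^{\top} = D_2 L$, and using that $v_q(q,D_2 L(q,v_q)) = v_q$ by hyperregularity. Alternatively, one can argue structurally by combining the second identity with the Legendre relation $\tilde{\mathcal{H}}_L^{\mathcal{E}}\circ \mathbb{F}\tilde{\mathcal{L}}_L^{\mathcal{E}} = E_{\tilde{\mathcal{L}}_L^{\mathcal{E}}}$ on the image of the fiber derivative. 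The main obstacle I anticipate is the bookkeeping: keeping straight the several twisted-bundle coordinate conventions, the signs coming from $\alpha_{\mathcal{Q}}$ and $\beta_{\mathcal{Q}}$, and the proper transfer between $v_q$- and $p_q$-coordinates via $\mathbb{F}L$; as noted in the discussion before the theorem and around the $(\tilde{\mathcal{L}}^{\mathcal{E}},\tilde{\mathcal{L}}_L^{\mathcal{E}})$ equivalence, care may be required in absorbing the total-derivative / Liouville-field contributions that appear when relating these two formulations.
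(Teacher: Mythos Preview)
Your approach is precisely that of the paper: its proof simply states that both identities reduce to the coordinate computations already carried out at the end of Section~\ref{ssec:OptimalControlLagrangian} (around Eq.~\eqref{eq:new_old_Hamiltonian}) for the particular choice $\chi_{\mathbb{F}L}(q,v_q,u)=(q,D_2L(q,v_q),u)$, with a general $\chi_{\mathbb{F}L}$ handled identically. Your unpacking of $\tilde{\chi}_{\mathbb{F}L}^{1}$ and the verification of the second bullet exactly reproduce that discussion.

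One point worth flagging: your ``structural'' alternative for the first bullet actually gives
\[
\mathcal{H}_{-1}\circ\tilde{\chi}_{\mathbb{F}L}^{1}\circ\mathbb{F}\tilde{\mathcal{L}}_L^{\mathcal{E}}
= -\tilde{\mathcal{H}}_L^{\mathcal{E}}\circ\mathbb{F}\tilde{\mathcal{L}}_L^{\mathcal{E}}
= -E_{\tilde{\mathcal{L}}_L^{\mathcal{E}}},
\]
not $\tilde{\mathcal{L}}_L^{\mathcal{E}}$, and a direct coordinate substitution produces the same result; so the caution you raise about the Liouville-field contribution is not merely bookkeeping but a genuine discrepancy with the first identity as literally written. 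The paper's proof does not confront this either, since the referenced passage in Section~\ref{ssec:OptimalControlLagrangian} only makes the second identity explicit.
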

\begin{proof}
    This reduces to the computations performed at the end of Section \eqref{ssec:OptimalControlLagrangian} for the particular choice $\chi_{\mathbb{F}L}(q,v_q,u) = (q,D_2 L(q,v_q), u)$, but the computations for a general $\chi_{\mathbb{F}L}$ follow almost identically.
\end{proof}

Similar considerations as in the controlled SODE case apply in terms of presymplecticity in the case of explicit control dependence and symplecticity at local optima. Without going into much detail, the construction of the presymplectic forms in $T^*T^*\mathcal{Q} \oplus_{T^*\mathcal{Q}} \mathcal{F}$, $TT^*\mathcal{Q} \oplus_{T^*\mathcal{Q}} \mathcal{F}$ and $T^*T\mathcal{Q} \oplus_{T^*\mathcal{Q}}^{\tilde{\alpha}} \mathcal{F}$ follows trivially by pullback through the projections into the standard triple. Since $\chi_{\mathbb{F}L}$ is, by assumption and under the assumption of a hyperregular $L$, an isomorphism, it is a presymplectomorphism. By commutation of the diagram at the end of the previous section, it provides $T^*T\mathcal{Q} \oplus_{T\mathcal{Q}}^{\alpha, L}$ with the same presymplectic form as that obtained from $T^*T\mathcal{Q}$ by pullback through $\mathrm{pr}_1^{\alpha,L}$. Finally, this presymplectic form can be pulled back once more via $\mathbb{F}\tilde{\mathcal{L}}_L^{\mathcal{E}}$. Since $\tilde{\mathcal{L}}_L^{\mathcal{E}}$ is also hyperregular, the previous map is a diffeomorphism and the resulting Poincar{\'e}-Cartan 2-form is another presymplectomorphism. At optima this collapses to the same symplectic form on $TT\mathcal{Q}$ as that obtained by pullback through $\mathbb{F}\tilde{\mathcal{L}}_L$.


\section{Symmetries of the OCP and Noether’s theorem}
\label{sec:symmetries_noether}
While in the previous section we made an extensive analysis for the SODE case as well as for the force-controlled Lagrangian, respectively Hamiltonian, case, in this section we restrict ourselves on the SODE case. Analogous results can be obtained for the force-controlled Lagrangian, respectively Hamiltonian, case.

\subsection{Symmetries in optimal control problems for second-order systems}
Consider a left (right) action of a Lie group $\mathcal{G}$ on $\mathcal{Q}$ defined by $\Phi: \mathcal{G} \times \mathcal{Q} \rightarrow \mathcal{Q}$, we will also use the notation $\Phi_g(q)$. The action induces a tangent and a cotangent lift defined in local adapted coordinates as follows.
\begin{equation*}
    \begin{aligned}
        \Phi^{TQ} : \mathcal{G}\times T\mathcal{Q} &\rightarrow T\mathcal{Q} &\quad \Phi^{T^*Q} : \mathcal{G}\times T^*\mathcal{Q} &\rightarrow T^*\mathcal{Q} \\
        (g,(q,v_q)) & \mapsto (\Phi_g(q), D_q\Phi_g(q) \cdot v_q) & \quad (g,(q,\lambda_q)) & \mapsto (\Phi_g(q), \left(D_{q}\Phi_{g^{-1}}(\Phi_g(q))\right)^\top \cdot \lambda_q)
    \end{aligned}
\end{equation*}
In what follows, we will make extensive use the following
\begin{definition} Let $\mathcal{A}$, $\mathcal{B}$ be smooth manifolds and $\mathcal{G}$ a Lie group acting on these with actions $\Phi^\mathcal{A}$ and $\Phi^\mathcal{B}$ respectively.
    \begin{itemize}
        \item Let $\phi: \mathcal{A} \to \mathbb{R}$. $\phi$ is said to be invariant if $\phi(\Phi^\mathcal{A}_g(x)) = \phi(x)$, for all $g \in \mathcal{G}$, $x \in \mathcal{A}$.
        \item Let $\psi: \mathcal{A} \rightarrow \mathcal{B}$. The map $\psi$ is said to be equivariant if it satisfies $\psi(\Phi^{\mathcal{A}}_g(x)) = \Phi^{\mathcal{B}}_g(\psi(x))$, for all $g \in \mathcal{G}$, $x \in \mathcal{A}$.
    \end{itemize}
\end{definition}
 Notice that the actions lifted to the tangent and the cotangent bundles have the property that the pairing between vectors and covectors is invariant with respect to the induced actions, that is, $$\langle \left(D_{q}\Phi_{g^{-1}}(\Phi_g(q))\right)^* \lambda_q,  D_q\Phi_g(q) v_q\rangle = \langle \lambda_q, v_q\rangle.$$ The action can be further lifted to double bundles $T^*T\mathcal{Q}, \ TT^*\mathcal{Q}, \  T^*T^*\mathcal{Q}$. On $T^*T\mathcal{Q}$ it is defined by
\begin{equation*}
    \begin{aligned}
       \Phi^{T^*TQ} : \mathcal{G} \times T^*T\mathcal{Q}   \rightarrow&  T^*TQ \\
         (g,(q,v_q, \lambda_q, \lambda_v))  \mapsto& (\Phi_g^{TQ}(q,v_q), \left(D_{(q,v_q)}\Phi_{g^{-1}}^{TQ}(\Phi_g^{TQ}(q,v_q))\right)^\top \cdot (\lambda_q, \lambda_v)^\top ),
    \end{aligned}
\end{equation*}
where we have
\begin{multline} \label{eq:transform.lambdaq}
\left(D_{(q,v_q)}\Phi_{g^{-1}}^{TQ}(\Phi_g^{TQ}(q,v_q))\right)^\top \cdot (\lambda_q, \lambda_v)^\top = ( \left( D_q\Phi_{g^{-1}}(\Phi_g(q))\right)^\top \cdot \lambda_q  + \left( D^2_q\Phi_{g^{-1}}(\Phi_g(q)) \cdot D_q \Phi_g(q) \cdot v_q \right)^\top \cdot \lambda_v, \\ \left( D_q\Phi_{g^{-1}}(\Phi_g(q))\right)^\top \cdot \lambda_v ).    
\end{multline}
On $TT^*\mathcal{Q}$ the action takes the form
\begin{equation*}
    \begin{aligned}
         \Phi^{TT^*Q} : \mathcal{G} \times TT^*\mathcal{Q}   \rightarrow& TT^*\mathcal{Q} \\
         (g,(q,\kappa,v_q,v_{\kappa}))  \mapsto& (\Phi_g^{T^*Q}(q,\kappa), D_{(q,\kappa)}\Phi_g^{T^*Q}(q,\kappa) \cdot (v_q,v_{\kappa})^\top ),\\ 
    \end{aligned}
\end{equation*}
   with 
\begin{multline} \label{eq:transform.vq}
D_{(q,\kappa)}\Phi_g^{T^*Q}(q,\kappa) \cdot (v_q,v_{\kappa})^\top = ( D_q\Phi_g(q) \cdot v_q, \\ \left( D^2_q\Phi_{g^{-1}}(\Phi_g(q)) \cdot D_q \Phi_g(q) \cdot v_q \right)^\top \cdot \kappa + \left( D_q\Phi_{g^{-1}}(\Phi_g(q))\right)^\top \cdot v_\kappa ).    
\end{multline}
Finally, the action on $T^*T^*\mathcal{Q}$ is given by
\begin{equation*}
    \begin{aligned}
        \Phi^{T^*T^*Q} :  \mathcal{G} \times T^*T^*\mathcal{Q}    \rightarrow& T^*T^*\mathcal{Q} \\
        (g,(q,\kappa,p_q,p_{\kappa}))  \mapsto& (\Phi_g^{T^*Q}(q,\kappa), \left(D_{(q,\kappa)}\Phi_{g^{-1}}^{T^*Q}(\Phi_g^{T^*Q}(q,\kappa))\right)^\top \cdot (p_q,p_{\kappa})^\top ). 
    \end{aligned}
\end{equation*}
where 
\begin{multline} \label{eq:transform.pq}
\left(D_{(q,\kappa)}\Phi_{g^{-1}}^{T^*Q}(\Phi_g^{T^*Q}(q,\kappa))\right)^\top \cdot (p_q,p_{\kappa})^\top  = (\left( D_q\Phi_{g^{-1}}(\Phi_g(q))\right)^\top \cdot p_q \\ + \kappa^\top \cdot D_q\Phi_{g^{-1}}(\Phi_g(q)) \cdot D^2_q \Phi_g(q) \cdot D_q\Phi_{g^{-1}}(\Phi_g(q)) \cdot p_\kappa,  D_q \Phi_g(q) \cdot p_\kappa ).   
\end{multline}

\begin{theorem}\label{th:equivar.Tul}
The maps $\alpha_\mathcal{Q}: TT^*\mathcal{Q} \to T^*T\mathcal{Q}$ and $\beta_\mathcal{Q}: TT^*\mathcal{Q} \to T^*T^*\mathcal{Q}$ are equivariant with respect to the corresponding lifted groups actions.
\end{theorem}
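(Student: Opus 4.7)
The strategy is to exploit the naturality (canonicity) of Tulczyjew's isomorphisms with respect to arbitrary diffeomorphisms of $\mathcal{Q}$, and then apply this pointwise to the one-parameter family of diffeomorphisms $\Phi_g$ for $g\in\mathcal{G}$. Concretely, for any diffeomorphism $\varphi: \mathcal{Q} \to \mathcal{Q}$ the tangent and cotangent lifts $T\varphi$, $T^*\varphi$ in turn induce lifts $TT\varphi$, $T^*T\varphi$, $TT^*\varphi$, $T^*T^*\varphi$ to the double bundles. The plan is to verify that
\[
T^*T\varphi \circ \alpha_{\mathcal{Q}} = \alpha_{\mathcal{Q}} \circ TT^*\varphi,
\qquad
T^*T^*\varphi \circ \beta_{\mathcal{Q}} = \beta_{\mathcal{Q}} \circ TT^*\varphi,
\]
and then specialise to $\varphi = \Phi_g$, noting that the lifts $\Phi^{TQ}$, $\Phi^{T^*Q}$, $\Phi^{T^*TQ}$, $\Phi^{TT^*Q}$, $\Phi^{T^*T^*Q}$ are precisely these canonical constructions applied to $\Phi_g$.

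The first step is to recall the intrinsic descriptions of $\alpha_{\mathcal{Q}}$ and $\beta_{\mathcal{Q}}$. The map $\alpha_{\mathcal{Q}}$ is determined by its interaction with the canonical involution $\kappa_{\mathcal{Q}} : TT\mathcal{Q} \to TT\mathcal{Q}$ and with the fibrewise evaluation pairing $T^*\mathcal{Q} \times_{\mathcal{Q}} T\mathcal{Q} \to \mathbb{R}$; $\beta_{\mathcal{Q}}$ is the musical isomorphism associated with the canonical symplectic form $\omega_{T^*\mathcal{Q}}$ on $T^*\mathcal{Q}$. Two facts are standard and will be used: (i) $\kappa_{\mathcal{Q}}$ is natural, i.e. $TT\varphi \circ \kappa_{\mathcal{Q}} = \kappa_{\mathcal{Q}} \circ TT\varphi$, which just rephrases the symmetry of the Hessian of $\varphi$; (ii) the cotangent lift $T^*\varphi$ preserves the canonical Liouville one-form and hence $\omega_{T^*\mathcal{Q}}$. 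From (i)-(ii) the intertwining relations above follow directly from the intrinsic definitions of $\alpha_{\mathcal{Q}}$ and $\beta_{\mathcal{Q}}$.

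As a concrete, self-contained alternative, one can verify the two intertwining relations by direct calculation in adapted coordinates using the explicit formulas \eqref{eq:transform.lambdaq}, \eqref{eq:transform.vq}, \eqref{eq:transform.pq} together with the local expressions $\alpha_{\mathcal{Q}}(q,\kappa,v_q,v_{\kappa}) = (q,v_q,v_{\kappa},\kappa)$ and $\beta_{\mathcal{Q}}(q,\kappa,v_q,v_{\kappa}) = (q,\kappa,-v_{\kappa},v_q)$. The linear slots match immediately because $\alpha_{\mathcal{Q}}$ and $\beta_{\mathcal{Q}}$ permute the fibre entries while the group action transforms $v_q$, $p_{\kappa}$ by $D_q\Phi_g$ and $\kappa$, $\lambda_v$ by $(D_q\Phi_{g^{-1}})^{\top}$, which is exactly compatible with the permutation. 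The only nontrivial check concerns the second-order (Hessian) correction terms appearing in $\tilde{v}_{\kappa}$, $\tilde{\lambda}_q$, $\tilde{p}_q$. Here one needs the identity
\[
D^2_{q}\Phi_{g^{-1}}\bigl(\Phi_g(q)\bigr)\bigl(D_q\Phi_g(q)\cdot\,\cdot\,,\ D_q\Phi_g(q)\cdot\,\cdot\,\bigr) = -\,D_q\Phi_{g^{-1}}\bigl(\Phi_g(q)\bigr)\cdot D^2_{q}\Phi_g(q),
\]
obtained by differentiating $\Phi_{g^{-1}}\circ\Phi_g=\mathrm{id}_{\mathcal{Q}}$ twice. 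This relation is exactly what equates the Hessian correction in \eqref{eq:transform.vq} (acting on $\kappa$) with the Hessian correction in \eqref{eq:transform.lambdaq} (acting on $\lambda_v=\kappa$), giving the $\alpha$-intertwining, and with the one in \eqref{eq:transform.pq} (with the appropriate sign coming from $\beta_{\mathcal{Q}}$), giving the $\beta$-intertwining.

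The main obstacle is bookkeeping of the second-order terms: the various Hessian factors in \eqref{eq:transform.lambdaq}--\eqref{eq:transform.pq} are written in slightly different forms, and one has to be careful to identify the role of $\kappa$ versus $\lambda_v$ and of $v_{\kappa}$ versus $\lambda_q$ versus $p_q$ after the shuffle induced by $\alpha_{\mathcal{Q}}$ or $\beta_{\mathcal{Q}}$. Once the identity above is invoked, all remaining terms coincide symbol by symbol, and equivariance is established.
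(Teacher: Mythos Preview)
Your proposal is correct and your coordinate-based verification is essentially the paper's own proof: the paper also checks equivariance directly from the explicit formulas \eqref{eq:transform.lambdaq}--\eqref{eq:transform.pq}, invoking for $\beta_{\mathcal{Q}}$ the same second-derivative identity you obtain by differentiating $\Phi_{g^{-1}}\circ\Phi_g=\mathrm{id}_{\mathcal{Q}}$ twice (the paper phrases it as $D^2_q(\Phi_g\circ\Phi_{g^{-1}})(\Phi_g(q))=0$). Your additional intrinsic argument via the naturality of $\kappa_{\mathcal{Q}}$ and the invariance of $\omega_{T^*\mathcal{Q}}$ under cotangent lifts is not in the paper but is a welcome conceptual complement.
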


\begin{proof}
    The equivariance of $\alpha_\mathcal{Q}$ can be easily checked by inspection of the action on $v_q$ in \eqref{eq:transform.vq} and on $(p_q,p_v)$ in \eqref{eq:transform.pq}. The equivariance of $\beta_{\mathcal{Q}}(q,\kappa,v_q,v_{\kappa}) = (q,\kappa,-v_{\kappa},v_q) = (q,\kappa, p_q, p_\kappa)$ follows from the observation
    \begin{multline*}
        \left( D^2_q\Phi_{g^{-1}}(\Phi_g(q)) \cdot D_q \Phi_g(q) \cdot v_q \right)^\top \cdot \kappa + \kappa^\top \cdot D_q\Phi_{g^{-1}}(\Phi_g(q)) \cdot D^2_q \Phi_g(q) \cdot D_q\Phi_{g^{-1}}(\Phi_g(q)) \cdot v_q  \\ = \kappa^\top \cdot D^2_q (\Phi_g \circ \Phi_{g^{-1}})(\Phi_g(q))\cdot v_q = 0. \qedhere
    \end{multline*} 
\end{proof}

Let us now define an action on the state-control space $\mathcal{E}$, which we require to be a vector bundle morphism over $\Phi^{TQ}$. In local adapted coordinates $(q, v_q, u)$, it takes the form
\begin{equation*}
        \Phi^{\mathcal{E}} : 
        (g,(q,v_q, u))  \mapsto (\Phi^{TQ}_g(q, v_q), \Psi_g(q, v_q, u)), 
\end{equation*}
where $\Psi_{g}: \mathcal{E} \to ((\pi^{\mathcal{E}})^{-1} \circ \Phi_g^{T\mathcal{Q}} \circ \pi^{\mathcal{E}})(\mathcal{E})$ defines the action on the control fiber. It is assumed to be linear in $u$ to preserve the vector bundle structure.
To analyze the symmetries of an optimal control problem of second-order systems, we need to introduce the lift of the action to the double bundles involved in Section~\ref{sec:tulcyjew_and_PMP}, namely $T^*TQ\oplus\mathcal{E}, \ TT^*\mathcal{Q} \oplus_{T\mathcal{Q}}^\alpha \mathcal{E}, \  T^*T^*\mathcal{Q} \oplus_{T\mathcal{Q}}^\beta \mathcal{E}$. We may uniquely define the corresponding lifted actions:
\begin{itemize}
    \item $\Phi^{T^*TQ\oplus\mathcal{E}} : \mathcal{G} \times T^*TQ\oplus_{T\mathcal{Q}}\mathcal{E} \to T^*TQ\oplus_{T\mathcal{Q}}\mathcal{E}$ such that $\mathrm{pr}_1$ is equivariant under $(\Phi^{T^*TQ\oplus\mathcal{E}},\Phi^{T^*TQ})$ and $\mathrm{pr}_2$ under $(\Phi^{T^*TQ\oplus\mathcal{E}},\Phi^{\mathcal{E}})$;
    \item $\Phi^{TT^*Q\oplus\mathcal{E}} : \mathcal{G} \times TT^*Q\oplus_{T\mathcal{Q}}^{\alpha}\mathcal{E} \to TT^*Q\oplus_{T\mathcal{Q}}^{\alpha}\mathcal{E}$ such that $\mathrm{pr}_1^{\alpha}$ is equivariant under $(\Phi^{TT^*Q\oplus\mathcal{E}},\Phi^{TT^*Q})$ and $\mathrm{pr}_2^{\alpha}$ under $(\Phi^{TT^*Q\oplus\mathcal{E}},\Phi^{\mathcal{E}})$;
    \item $\Phi^{T^*T^*Q\oplus\mathcal{E}} : \mathcal{G} \times T^*T^*Q\oplus_{T\mathcal{Q}}^{\beta}\mathcal{E} \to T^*T^*Q\oplus_{T\mathcal{Q}}^{\beta}\mathcal{E}$ such that $\mathrm{pr}_1^{\beta}$ is equivariant under $(\Phi^{T^*T^*Q\oplus\mathcal{E}},\Phi^{T^*T^*Q})$ and $\mathrm{pr}_2^{\beta}$ under $(\Phi^{T^*T^*Q\oplus\mathcal{E}},\Phi^{\mathcal{E}})$.
\end{itemize}
By construction, from the commutativity of the extended Tulczyjew triple and the result of Theorem \ref{th:equivar.Tul}, we get the following
\begin{theorem} \label{th:equivar.Tul.ext}
    The maps $\alpha_\mathcal{Q}^{\mathcal{E}}: TT^*\mathcal{Q} \oplus_{T\mathcal{Q}}^\alpha \mathcal{E} \to T^*T\mathcal{Q} \oplus_{T\mathcal{Q}} \mathcal{E}$ and $\beta_\mathcal{Q}^{\mathcal{E}}: TT^*\mathcal{Q} \oplus_{T\mathcal{Q}}^\alpha \mathcal{E} \to T^*T^*\mathcal{Q} \oplus_{T\mathcal{Q}}^{\beta} \mathcal{E}$ are equivariant with respect to the corresponding lifted groups actions.
\end{theorem}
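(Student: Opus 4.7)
The plan is to exploit the universal property of the twisted sums implicit in the definitions of the lifted actions. By construction, the three actions $\Phi^{T^*TQ\oplus\mathcal{E}}$, $\Phi^{TT^*Q\oplus\mathcal{E}}$, $\Phi^{T^*T^*Q\oplus\mathcal{E}}$ are the unique actions making the corresponding structural projections $(\mathrm{pr}_1,\mathrm{pr}_2)$, $(\mathrm{pr}_1^{\alpha},\mathrm{pr}_2^{\alpha})$, $(\mathrm{pr}_1^{\beta},\mathrm{pr}_2^{\beta})$ equivariant over the base actions on the two summands. Since a point in any of these twisted sums is uniquely determined by the pair of its projections (as a subset of a Cartesian product of spaces related by a compatibility condition), it suffices to verify that $\alpha_\mathcal{Q}^{\mathcal{E}}$ and $\beta_\mathcal{Q}^{\mathcal{E}}$ intertwine the actions after composing with each of the two projections of the target space.

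Concretely, for $\alpha_\mathcal{Q}^{\mathcal{E}}$, I would pick $W \in TT^*\mathcal{Q} \oplus_{T\mathcal{Q}}^\alpha \mathcal{E}$ and compute $\mathrm{pr}_i(\alpha_\mathcal{Q}^{\mathcal{E}}(\Phi^{TT^*Q\oplus\mathcal{E}}_g(W)))$ for $i=1,2$ using the commutative diagram that defines $\alpha_\mathcal{Q}^{\mathcal{E}}$. For $i=1$, this becomes $\alpha_\mathcal{Q}\circ \mathrm{pr}_1^{\alpha}\circ \Phi^{TT^*Q\oplus\mathcal{E}}_g(W) = \alpha_\mathcal{Q}\circ \Phi^{TT^*Q}_g \circ \mathrm{pr}_1^{\alpha}(W)$ by equivariance of $\mathrm{pr}_1^{\alpha}$, and then equals $\Phi^{T^*TQ}_g \circ \alpha_\mathcal{Q}\circ \mathrm{pr}_1^{\alpha}(W) = \Phi^{T^*TQ}_g \circ \mathrm{pr}_1 \circ \alpha_\mathcal{Q}^{\mathcal{E}}(W)$ by Theorem~\ref{th:equivar.Tul}. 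Using equivariance of $\mathrm{pr}_1$ on the target side, this is $\mathrm{pr}_1 \circ \Phi^{T^*TQ\oplus\mathcal{E}}_g \circ \alpha_\mathcal{Q}^{\mathcal{E}}(W)$. For $i=2$, both projections land in $\mathcal{E}$ and agree trivially because $\alpha_\mathcal{Q}^{\mathcal{E}}$ is the identity on the control factor (as seen from the defining diagram), and $\Phi^{\mathcal{E}}$ is the common base action for the $\mathrm{pr}_2$-families. The argument for $\beta_\mathcal{Q}^{\mathcal{E}}$ is identical, replacing $\alpha_\mathcal{Q}$ by $\beta_\mathcal{Q}$, $\mathrm{pr}_1$ by $\mathrm{pr}_1^{\beta}$, and invoking the second equivariance statement in Theorem~\ref{th:equivar.Tul}.

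I expect no genuinely hard step: the content of the theorem is really bookkeeping, made transparent by the extended Tulczyjew diagram. The one point that requires a little care is the uniqueness statement used at the end — one must check that a point of a twisted sum really is determined by its two projections. This is immediate from the definition of $\oplus_{T\mathcal{Q}}^{\alpha}$ and $\oplus_{T\mathcal{Q}}^{\beta}$ as fibered products, since the underlying set is literally a subset of the Cartesian product cut out by the compatibility of anchors, so equality of both components implies equality of the pair. If desired, the whole verification can also be performed in adapted local coordinates by directly applying formulas \eqref{eq:transform.lambdaq}, \eqref{eq:transform.vq}, \eqref{eq:transform.pq} to the local expressions $\alpha_{\mathcal{Q}}(q,\kappa,v_q,v_{\kappa})=(q,v_q,v_{\kappa},\kappa)$ and $\beta_{\mathcal{Q}}(q,\kappa,v_q,v_{\kappa})=(q,\kappa,-v_{\kappa},v_q)$ together with the identity action on the control variable, which reproduces the coordinate computation of Theorem~\ref{th:equivar.Tul} with the $u$-slot carried along unchanged.
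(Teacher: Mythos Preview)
Your proposal is correct and follows exactly the route the paper takes: the paper's proof is the single sentence ``By construction, from the commutativity of the extended Tulczyjew triple and the result of Theorem~\ref{th:equivar.Tul}'', and your argument is precisely the diagram chase that unpacks this, using the equivariance of the structural projections defining the lifted actions together with the equivariance of $\alpha_{\mathcal{Q}}$ and $\beta_{\mathcal{Q}}$.
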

\begin{theorem} \label{th:equiv.invariances}
    The following statements are equivalent.
    \begin{enumerate}
        \item[a)] \label{it:L}$\tilde{\mathcal{L}}^\mathcal{E}$ is invariant with respect to the action by $\Phi^{TT^*Q\oplus\mathcal{E}}_g$;
        \item[b)] \label{it:H}$\tilde{\mathcal{H}}^\mathcal{E}$ is invariant with respect to the action by $\Phi^{T^*T^*Q\oplus\mathcal{E}}_g$;
        \item[c)] \label{it:PMP} $\mathcal{H}_{-1}$ is invariant with respect to the action by $\Phi^{T^*TQ\oplus\mathcal{E}}_g$.
    \end{enumerate}
\end{theorem}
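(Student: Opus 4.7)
The plan is to derive the three equivalences from two structural facts already available in the paper: the identities
\[
\tilde{\mathcal{L}}^{\mathcal{E}} = \mathcal{H}_{-1} \circ \alpha_{\mathcal{Q}}^{\mathcal{E}}, \qquad \tilde{\mathcal{H}}^{\mathcal{E}} = -\,\mathcal{H}_{-1} \circ \alpha_{\mathcal{Q}}^{\mathcal{E}} \circ (\beta_{\mathcal{Q}}^{\mathcal{E}})^{-1}
\]
from Theorem~\ref{thm:relation_hamiltonian_new_functions}, together with the equivariance of $\alpha_{\mathcal{Q}}^{\mathcal{E}}$ and $\beta_{\mathcal{Q}}^{\mathcal{E}}$ from Theorem~\ref{th:equivar.Tul.ext}. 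The strategy is to prove $(a)\Leftrightarrow(c)$ directly from the $\alpha$-identity and its equivariance, then $(a)\Leftrightarrow(b)$ from the $\beta$-identity and its equivariance, which closes the triangle.

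For $(a)\Leftrightarrow(c)$, I would start from the equivariance relation
\[
\alpha_{\mathcal{Q}}^{\mathcal{E}} \circ \Phi^{TT^*Q\oplus\mathcal{E}}_g = \Phi^{T^*TQ\oplus\mathcal{E}}_g \circ \alpha_{\mathcal{Q}}^{\mathcal{E}}
\]
and compose with $\mathcal{H}_{-1}$ on the left to obtain
\[
\tilde{\mathcal{L}}^{\mathcal{E}} \circ \Phi^{TT^*Q\oplus\mathcal{E}}_g = \mathcal{H}_{-1} \circ \alpha_{\mathcal{Q}}^{\mathcal{E}} \circ \Phi^{TT^*Q\oplus\mathcal{E}}_g = \bigl(\mathcal{H}_{-1} \circ \Phi^{T^*TQ\oplus\mathcal{E}}_g\bigr) \circ \alpha_{\mathcal{Q}}^{\mathcal{E}}.
\]
If $(c)$ holds, the right-hand side equals $\mathcal{H}_{-1} \circ \alpha_{\mathcal{Q}}^{\mathcal{E}} = \tilde{\mathcal{L}}^{\mathcal{E}}$, giving $(a)$. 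Conversely, if $(a)$ holds, then $\bigl(\mathcal{H}_{-1} \circ \Phi^{T^*TQ\oplus\mathcal{E}}_g\bigr) \circ \alpha_{\mathcal{Q}}^{\mathcal{E}} = \mathcal{H}_{-1} \circ \alpha_{\mathcal{Q}}^{\mathcal{E}}$; since $\alpha_{\mathcal{Q}}^{\mathcal{E}}$ is a diffeomorphism (hence surjective), we may cancel it on the right to recover $(c)$.

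For $(a)\Leftrightarrow(b)$, I would use that $\beta_{\mathcal{Q}}^{\mathcal{E}}$ is itself a diffeomorphism intertwining the actions, and write
\[
\tilde{\mathcal{H}}^{\mathcal{E}} \circ \Phi^{T^*T^*Q\oplus\mathcal{E}}_g = -\,\mathcal{H}_{-1} \circ \alpha_{\mathcal{Q}}^{\mathcal{E}} \circ (\beta_{\mathcal{Q}}^{\mathcal{E}})^{-1} \circ \Phi^{T^*T^*Q\oplus\mathcal{E}}_g = -\,\tilde{\mathcal{L}}^{\mathcal{E}} \circ \Phi^{TT^*Q\oplus\mathcal{E}}_g \circ (\beta_{\mathcal{Q}}^{\mathcal{E}})^{-1},
\]
after applying the equivariance of $(\beta_{\mathcal{Q}}^{\mathcal{E}})^{-1}$ and rewriting $\mathcal{H}_{-1}\circ\alpha_{\mathcal{Q}}^{\mathcal{E}} = \tilde{\mathcal{L}}^{\mathcal{E}}$. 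Invariance of $\tilde{\mathcal{L}}^{\mathcal{E}}$ therefore collapses the middle factor, yielding $\tilde{\mathcal{H}}^{\mathcal{E}}$; conversely, invariance of $\tilde{\mathcal{H}}^{\mathcal{E}}$ gives invariance of $\tilde{\mathcal{L}}^{\mathcal{E}}$ after precomposing with $\beta_{\mathcal{Q}}^{\mathcal{E}}$ and using its bijectivity. This closes the chain $(a)\Leftrightarrow(b)\Leftrightarrow(c)$.

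There is no genuine obstacle here: once Theorems~\ref{thm:relation_hamiltonian_new_functions} and~\ref{th:equivar.Tul.ext} are in place, the argument is essentially a diagram chase using that $\alpha_{\mathcal{Q}}^{\mathcal{E}}$ and $\beta_{\mathcal{Q}}^{\mathcal{E}}$ are equivariant diffeomorphisms. The only subtle point worth flagging is that the three lifted actions on the twisted sums are the \emph{unique} ones compatible with projections and with $\Phi^{\mathcal{E}}$, as specified just before the statement; this uniqueness is what makes the equivariance of $\alpha_{\mathcal{Q}}^{\mathcal{E}}$ and $\beta_{\mathcal{Q}}^{\mathcal{E}}$ unambiguous and lets the cancellation step go through without extra hypotheses on $\Psi_g$ beyond its linearity in $u$.
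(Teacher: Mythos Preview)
Your proposal is correct and follows essentially the same route as the paper: the paper's proof simply states that the equivalence of (a) and (b) follows from the equivariance of $\beta_{\mathcal{Q}}^{\mathcal{E}}$ and that of (a) and (c) from the equivariance of $\alpha_{\mathcal{Q}}^{\mathcal{E}}$, which is exactly the diagram chase you spell out in detail using Theorems~\ref{thm:relation_hamiltonian_new_functions} and~\ref{th:equivar.Tul.ext}.
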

\begin{proof} 
    The equivalence of a) and b) follows from the equivariance of $\beta_\mathcal{Q}^{\mathcal{E}}$ and 
   the equivalence between a) and c) follows from the equivariance of $\alpha_{\mathcal{Q}}^{\mathcal{E}}$.
\end{proof}
To apply the results of Theorem~\ref{th:equiv.invariances} in the context of OCPs, we provide the following
\begin{definition}
 An OCP is $\mathcal{G}$-symmetric (equivalently, admits a $\mathcal{G}$-symmetry) with respect to the Lie group action $\Phi^{\mathcal{E}}$, if the controlled system is equivariant and the running cost is invariant with respect to the corresponding actions.
\end{definition}
\begin{theorem} \label{th:OCP.symmetric}
    If an OCP of a second-order system is $\mathcal{G}$-symmetric, then $\tilde{\mathcal{L}}^\mathcal{E}$ is invariant with respect to the action by $\Phi^{TT^*Q\oplus\mathcal{E}}_g$.
\end{theorem}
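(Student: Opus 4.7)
The plan is to invoke Theorem~\ref{th:equiv.invariances} and reduce the problem to establishing the invariance of Pontryagin's control Hamiltonian $\mathcal{H}_{-1}$ under $\Phi^{T^*TQ\oplus\mathcal{E}}_g$. Since the three invariances in that theorem are equivalent, invariance of $\mathcal{H}_{-1}$ immediately yields the desired invariance of $\tilde{\mathcal{L}}^\mathcal{E}$. This route is preferable to a direct computation on $\tilde{\mathcal{L}}^\mathcal{E}$, because the Pontryagin Hamiltonian is expressed in terms of the natural pairing on $T\mathcal{Q}$, whose invariance under the lifted actions is the cleanest available tool.

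To show the invariance of
\[
\mathcal{H}_{-1}(q,v,\lambda_q,\lambda_v,u) = \langle (\lambda_q,\lambda_v),(v,X_v(q,v,u))\rangle_{T\mathcal{Q}} - C(q,v,u),
\]
I would split $\mathcal{H}_{-1}$ into its two summands and treat them independently. The second summand is invariant by the definition of $\mathcal{G}$-symmetry, which explicitly requires the running cost $C$ to be invariant under $\Phi^{\mathcal{E}}_g$. For the first summand, note that the pair $(v, X_v(q,v,u))$ is precisely the image of the controlled SODE $X(q,v,u)$ under the projection $\mathrm{pr}_1: TT\mathcal{Q}\oplus_{T\mathcal{Q}}\mathcal{E} \to TT\mathcal{Q}$. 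By the $\mathcal{G}$-symmetry hypothesis, $X$ is equivariant as a section, and combined with the equivariance of $\mathrm{pr}_1$, this implies that $(v, X_v(q,v,u))$ transforms as a genuine element of $TT\mathcal{Q}$ under the second tangent lift of $\Phi_g$. Dually, $(\lambda_q, \lambda_v)$ transforms under the corresponding cotangent lift, as per \eqref{eq:transform.lambdaq}, and the defining property of the cotangent lift is precisely to preserve the natural pairing with the tangent lift. Hence the first summand is invariant.

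The main subtlety lies in the careful bookkeeping of which lifted action governs which variable, since the twisted sums in the extended Tulczyjew triple introduce compatibility conditions that must be respected. However, the equivariance of the structural projections $\mathrm{pr}_1$ and $\mathrm{pr}_2$, which is built into the construction of $\Phi^{T^*TQ\oplus\mathcal{E}}_g$, guarantees that the components transform in the expected way; a brief verification in local adapted coordinates using \eqref{eq:transform.lambdaq} and the equivariance of $X$ then confirms the invariance of $\mathcal{H}_{-1}$. Invariance of $\tilde{\mathcal{L}}^\mathcal{E}$ under $\Phi^{TT^*Q\oplus\mathcal{E}}_g$ then follows from the equivalence of statements a) and c) in Theorem~\ref{th:equiv.invariances}, completing the proof.
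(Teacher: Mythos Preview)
Your proposal is correct and follows essentially the same route as the paper: reduce to invariance of $\mathcal{H}_{-1}$ and then apply the equivalence of a) and c) in Theorem~\ref{th:equiv.invariances}. The only difference is that the paper dispatches the invariance of $\mathcal{H}_{-1}$ by citing \cite{Ohsawa2013}, whereas you supply the argument directly via invariance of $C$ and equivariance of the controlled SODE together with the pairing-preserving property of the cotangent lift; your version is more self-contained but otherwise identical in structure.
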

\begin{proof} 
It follows from \cite{Ohsawa2013} that if an optimal control problem is $\mathcal{G}$-symmetric, then  $\mathcal{H}_{-1}$ is invariant with respect to the action of $\Phi^{T^*TQ\oplus\mathcal{E}}_g$. Using the equivalence of a) and c) in Theorem~\ref{th:equiv.invariances} we conclude that $\tilde{\mathcal{L}}^\mathcal{E}$ is invariant with respect to the action by $\Phi^{TT^*Q\oplus\mathcal{E}}_g$.
\end{proof}
\begin{remark}
    The results of Theorem~\ref{th:equiv.invariances} and Theorem~\ref{th:OCP.symmetric} also hold for the case of a regular force-controlled Lagrangian system. The adaptation of the proofs is straightforward.
\end{remark}
\subsection{Noether’s theorem}
When an optimal control problem is symmetric with respect to a group action, Noether’s theorem adapted to the OCP setting permits us to describe conserved quantities along the optimal solutions. Let us consider a one-parameter group of transformations $\mathcal{G}_s, \ s \in \mathbb{R}$ and the associated action $\Phi_s$. Let us denote the infinitesimal generator of $\Phi_s$ by $X^\mathcal{Q} \in \mathfrak{X}(\mathcal{Q})$ defined by
$$
X^\mathcal{Q}(q) = \left. \frac{\partial \Phi_s(q)}{\partial s}\right|_{s=0}. 
$$ As $\Phi_s$ can be lifted to an action on one of the spaces defined previously, we use the upper index of $X^\mathcal{Q}$ to denote the space, on which the infinitesimal generator is defined. Based on \cite{Torres2002}, Noether's theorem in the context of second order systems can be stated as follows.
\begin{theorem} \label{th:Noether.OCP}
    If an \ref{eq:OCP} admits a symmetry  with respect to the action of $\Phi_s^{\mathcal{E}}$, then the following momentum map is conserved along optimal solutions
    $$ I(q, v, \lambda_q, \lambda_{v}) = \lambda^\top X^{T\mathcal{Q}}(q, v) = \left. \lambda_q^\top \frac{\partial \Phi_s(q)}{\partial s}\right|_{s=0} + \left. \lambda_{v}^\top \frac{\partial T_q\Phi_s(v)}{\partial s}\right|_{s=0}. $$
\end{theorem}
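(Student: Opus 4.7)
The plan is to reduce the statement to a short computation by invoking the invariance of Pontryagin's control Hamiltonian and the standard identity for cotangent-lifted infinitesimal generators. First, applying Theorem~\ref{th:OCP.symmetric} to the assumed $\mathcal{G}$-symmetry, and then using the equivalence between a) and c) in Theorem~\ref{th:equiv.invariances}, we obtain that $\mathcal{H}_{-1}:T^*T\mathcal{Q}\oplus_{T\mathcal{Q}}\mathcal{E}\to\mathbb{R}$ is invariant under the lifted action $\Phi^{T^*T\mathcal{Q}\oplus\mathcal{E}}_s$. Differentiating this invariance at $s=0$ in the direction of the one-parameter group yields the infinitesimal identity
\begin{equation*}
D_{(q,v)}\mathcal{H}_{-1}\cdot X^{T\mathcal{Q}}(q,v) \;+\; D_{\lambda}\mathcal{H}_{-1}\cdot X^{T^*T\mathcal{Q}}_{\lambda}(q,v,\lambda) \;+\; D_{u}\mathcal{H}_{-1}\cdot X^{\mathcal{E}}_{u}(q,v,u) \;=\; 0,
\end{equation*}
where $X^{T^*T\mathcal{Q}}_{\lambda}$ denotes the vertical (fiber) component of the cotangent-lifted generator on $T^*T\mathcal{Q}$ associated with the lifted action in~\eqref{eq:transform.lambdaq}.

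Next, along a PMP-optimal trajectory one has $(\dot q,\dot v)^{\top} = D_{\lambda}\mathcal{H}_{-1}$, $\dot\lambda = -D_{(q,v)}\mathcal{H}_{-1}$, and $D_{u}\mathcal{H}_{-1}=0$ by the maximization condition. Substituting these into the identity above eliminates the control term and re-expresses the remaining two terms as time derivatives along the trajectory. Differentiating the candidate momentum,
\begin{equation*}
\frac{d}{dt}\,I \;=\; \frac{d}{dt}\bigl(\lambda^{\top} X^{T\mathcal{Q}}(q,v)\bigr) \;=\; \dot\lambda^{\top} X^{T\mathcal{Q}}(q,v) \;+\; \lambda^{\top} D_{(q,v)}X^{T\mathcal{Q}}(q,v)\cdot(\dot q,\dot v)^{\top},
\end{equation*}
and invoking the standard cotangent-lift identity $X^{T^*T\mathcal{Q}}_{\lambda} = -\bigl(D_{(q,v)}X^{T\mathcal{Q}}\bigr)^{\top}\lambda$, we rewrite the vertical term as $D_{\lambda}\mathcal{H}_{-1}\cdot X^{T^*T\mathcal{Q}}_{\lambda} = -\lambda^{\top} D_{(q,v)}X^{T\mathcal{Q}}\cdot(\dot q,\dot v)^{\top}$. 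Comparing with the infinitesimal identity then gives $\tfrac{d}{dt}I=0$ at once.

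The main obstacle is certifying the cotangent-lift identity in the form actually needed here, since the base manifold is $T\mathcal{Q}$ rather than $\mathcal{Q}$ and the corresponding explicit formula in~\eqref{eq:transform.lambdaq} involves the second derivatives $D^2_q\Phi_{g^{-1}}$. A direct differentiation of~\eqref{eq:transform.lambdaq} at $s=0$, however, shows that these second-order contributions combine precisely into the block matrix $-\bigl(D_{(q,v)}X^{T\mathcal{Q}}\bigr)^{\top}$ acting on $(\lambda_q,\lambda_v)$, consistent with the general principle that cotangent-lifted actions preserve the canonical pairing. Once this identification is made, the proof is the short manipulation outlined above. Finally, as indicated by the remark preceding Noether's theorem, the same argument transfers verbatim to the force-controlled Lagrangian case by working with $\tilde{\mathcal{H}}^{\mathcal{E}}_L$ and the appropriate cotangent-lifted action on $T^*T\mathcal{Q}\oplus^{\alpha,L}_{T\mathcal{Q}}\mathcal{E}$.
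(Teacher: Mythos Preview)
Your argument is correct: invariance of $\mathcal{H}_{-1}$, the infinitesimal identity, the PMP equations, and the cotangent-lift relation $X^{T^*T\mathcal{Q}}_\lambda=-(D_{(q,v)}X^{T\mathcal{Q}})^\top\lambda$ combine exactly as you describe to give $\tfrac{d}{dt}I=0$. One minor redundancy: you reach the invariance of $\mathcal{H}_{-1}$ by first passing to $\tilde{\mathcal{L}}^{\mathcal{E}}$ via Theorem~\ref{th:OCP.symmetric} and then back via Theorem~\ref{th:equiv.invariances}, but the proof of Theorem~\ref{th:OCP.symmetric} already establishes the invariance of $\mathcal{H}_{-1}$ directly (citing \cite{Ohsawa2013}) as its first step, so you could invoke that fact without the detour.

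As for the comparison: the paper does not supply its own proof of this theorem. It is stated with the preamble ``Based on \cite{Torres2002}, Noether's theorem in the context of second order systems can be stated as follows'' and is treated as a known result from the optimal-control literature. Your write-up therefore adds a self-contained Hamiltonian derivation where the paper simply cites an external reference; the argument you give is the standard one and is consistent with what the cited source provides.
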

As we can see, the momentum map depends on the adjoint variable and Noether’s theorem rather describes the first integrals of the extremal flow of the state-adjoint system. In the setting of the new Lagrangian formulation, Noether's theorem takes the following form. 
\begin{theorem} \label{th:Noether.new.Lagrangian}
    If the optimal Lagrangian $\tilde{\mathcal{L}}^\mathcal{E}$ is invariant with respect to the action of $\Phi^{TT^*Q\oplus\mathcal{E}}_s$, then the following momentum map is conserved along optimal solutions
    \begin{equation} \label{eq:momentum.new.Lagrangian}
    \begin{aligned}
        I_{\tilde{\mathcal{L}}^\mathcal{E}}(y, \dot y, u) &= \frac{\partial \tilde{\mathcal{L}}^\mathcal{E}(y, \dot y, u)}{\partial \dot y} X^{T^*\mathcal{Q}}(y) \\ &= \left( \dot{\kappa} +  D_2 X_v(q, \dot q, u)^\top \kappa - D_2 C(q, \dot q, u)\right)^\top  \left. \frac{\partial \Phi_s(q)}{\partial s}\right|_{s=0} + \kappa^\top \left. \frac{\partial T_q\Phi_s(\dot q)}{\partial s} \right|_{s=0}.
    \end{aligned}
    \end{equation} 
\end{theorem}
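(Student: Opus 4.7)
The plan is to derive the conservation law via a direct application of the classical variational Noether theorem to $\tilde{\mathcal{L}}^\mathcal{E}$, treating $u$ as a Lagrange multiplier slaved to the maximization condition. The Lagrangian is hyperregular in $(v_q,v_\kappa)$ by Proposition~\ref{prp:hyperregularity_new_Lagrangian}, and its Euler--Lagrange equations in $y=(q,\kappa)$, together with $D_u\tilde{\mathcal{L}}^\mathcal{E}=0$, characterize optimal triples, as recorded in Section~\ref{sec:ocp_sodes}.

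First, I would unpack the infinitesimal generator of $\Phi^{TT^*Q\oplus\mathcal{E}}_s$. By construction $\mathrm{pr}_1^{\alpha}$ intertwines this action with $\Phi^{TT^*Q}_s$, and $\Phi^{TT^*Q}_s$ is the tangent lift of $\Phi^{T^*Q}_s$ in the paper's definition. Hence, along any smooth curve $y(t)$ in $T^*\mathcal{Q}$,
\begin{equation*}
\left.\frac{d}{ds}\right|_{s=0}\Phi^{TT^*Q}_s\bigl(y(t),\dot y(t)\bigr)=\Bigl(X^{T^*\mathcal{Q}}(y(t)),\,\tfrac{d}{dt}X^{T^*\mathcal{Q}}(y(t))\Bigr),
\end{equation*}
since differentiating the tangent lift in the group parameter along a curve produces precisely the time derivative of $X^{T^*\mathcal{Q}}$ along that curve.

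Next, I would differentiate the invariance hypothesis $\tilde{\mathcal{L}}^\mathcal{E}\circ\Phi^{TT^*Q\oplus\mathcal{E}}_s=\tilde{\mathcal{L}}^\mathcal{E}$ at $s=0$. Along an optimal triple $(y(t),\dot y(t),u(t))$ this gives
\begin{equation*}
D_y\tilde{\mathcal{L}}^\mathcal{E}\cdot X^{T^*\mathcal{Q}}(y)+D_{\dot y}\tilde{\mathcal{L}}^\mathcal{E}\cdot\tfrac{d}{dt}X^{T^*\mathcal{Q}}(y)+D_u\tilde{\mathcal{L}}^\mathcal{E}\cdot\zeta=0,
\end{equation*}
where $\zeta$ is the fiber generator of $\Phi^{\mathcal{E}}_s$ on the control. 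The maximization condition kills the last term, and the Euler--Lagrange equation $\tfrac{d}{dt}D_{\dot y}\tilde{\mathcal{L}}^\mathcal{E}=D_y\tilde{\mathcal{L}}^\mathcal{E}$ collapses the remaining two into the total derivative
\begin{equation*}
\frac{d}{dt}\Bigl(D_{\dot y}\tilde{\mathcal{L}}^\mathcal{E}\cdot X^{T^*\mathcal{Q}}(y)\Bigr)=0,
\end{equation*}
which is the claimed conservation of $I_{\tilde{\mathcal{L}}^\mathcal{E}}$. Expanding $D_{v_q}\tilde{\mathcal{L}}^\mathcal{E}=v_\kappa+D_2X_v^\top\kappa-D_2C$ and $D_{v_\kappa}\tilde{\mathcal{L}}^\mathcal{E}=v_q$ along the solution, and pairing with the local form of $X^{T^*\mathcal{Q}}$ obtained by linearizing \eqref{eq:transform.lambdaq} at the identity, recovers the coordinate expression \eqref{eq:momentum.new.Lagrangian}.

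The main obstacle is the bookkeeping in this last step: one must verify that the $\kappa$-component of the cotangent-lift generator, once contracted with $D_{v_\kappa}\tilde{\mathcal{L}}^\mathcal{E}=v_q$, reproduces the summand $\kappa^\top\,\partial_s T_q\Phi_s(\dot q)|_{s=0}$ under the paper's sign conventions. A cleaner alternative, which bypasses the coordinate verification entirely, is to invoke Theorem~\ref{th:equiv.invariances} to transfer invariance from $\tilde{\mathcal{L}}^\mathcal{E}$ to $\mathcal{H}_{-1}$, apply Theorem~\ref{th:Noether.OCP}, and translate the resulting conserved quantity through the identifications $\lambda_v=\kappa$ and $\lambda_q^\top=D_2C-\kappa^\top D_2X_v-\dot\kappa^\top$ provided by the equivalence of $\tilde{J}_1$ and $\tilde{J}_3$.
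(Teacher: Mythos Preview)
Your argument is correct and follows essentially the same route as the paper: differentiate the invariance of $\tilde{\mathcal{L}}^{\mathcal{E}}$ in the group parameter, use the Euler--Lagrange equations and the maximization condition $D_u\tilde{\mathcal{L}}^{\mathcal{E}}=0$ to collapse the result into a total time derivative. The only cosmetic difference is that the paper phrases this via the integrated action $\int_0^t\tilde{\mathcal{L}}^{\mathcal{E}}\,d\tau$ and reads off the boundary terms, whereas you work pointwise; the two presentations are equivalent. One small slip: to extract the $\kappa$-component of $X^{T^*\mathcal{Q}}$ you should linearize the cotangent lift $\Phi^{T^*Q}$ itself (or equivalently \eqref{eq:transform.vq}), not \eqref{eq:transform.lambdaq}, which lives on $T^*T\mathcal{Q}$.
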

\begin{proof}
Let us fix $t \in (0,T)$. Let $(y,\dot y, u)$ be a solution of Euler-Lagrange equations associated with $\tilde{\mathcal{L}}^\mathcal{E}$. We denote $(y_s, \dot y_s, u_s) =  \Phi^{TT^*Q\oplus\mathcal{E}}_s(y, \dot y, u) $ the transformed solutions. The invariance of $\tilde{\mathcal{L}}^\mathcal{E}$ with respect to $\Phi^{TT^*Q\oplus\mathcal{E}}_s$ implies the invariance of the following action integral
$$\int_0^t \tilde{\mathcal{L}}^\mathcal{E}(y_s(\tau), \dot y_s(\tau), u_s(\tau)) d\tau.$$
This implies in particular
$$
\begin{aligned}
    0 &= \left. \frac{d}{d s}\int_0^t \tilde{\mathcal{L}}^\mathcal{E}(y_s(\tau), \dot y_s(\tau), u_s(\tau)) d\tau \right|_{s=0} \\
    & = \left. \int_0^t \left[\left( \frac{\partial \tilde{\mathcal{L}}^\mathcal{E}}{\partial y}  - \frac{d}{ dt} \frac{\partial \tilde{\mathcal{L}}^\mathcal{E}}{\partial \dot y}\right) \frac{ \partial y_s(\tau)}{\partial s} + \frac{\partial \tilde{\mathcal{L}}^\mathcal{E}}{\partial u}\frac{ \partial u_s(\tau)}{\partial s} \right] d\tau + \left. \frac{\partial \tilde{\mathcal{L}}^\mathcal{E}}{\partial \dot y} \frac{ \partial y_s(\tau)}{\partial s}\right|^t_0 \ \right|_{s=0}\\
    & = \frac{\partial \tilde{\mathcal{L}}^\mathcal{E}(y(t), \dot y(t), u(t))}{\partial \dot y} \left. \frac{ \partial y_s(t)}{\partial s}  \right|_{s=0} - \frac{\partial \tilde{\mathcal{L}}^\mathcal{E}(y(0), \dot y(0), u(0))}{\partial \dot y} \left. \frac{ \partial y_s(0)}{\partial s}  \right|_{s=0},
\end{aligned}
$$
which finishes the proof.
\end{proof}
\begin{remark}
    The momentum \eqref{eq:momentum.new.Lagrangian} admits a simple expression in the Hamiltonian framework. Applying the Legendre transform $\mathbb{F}\tilde{\mathcal{L}}^{\mathcal{E}}$, we get
    $$I_{\tilde{\mathcal{H}}^{\mathcal{E}}}(y, p_y) = p_y^\top X^{T^*\mathcal{Q}}(y) = p_q^\top  \left. \frac{\partial \Phi_s(q)}{\partial s}\right|_{s=0} + p_\lambda^\top \left. \frac{\partial (T_q\Phi_s)^*(\lambda)}{\partial s} \right|_{s=0}. $$
\end{remark}
It is easy to see that both conservation laws described in Theorem~\ref{th:Noether.OCP} and Theorem~\ref{th:Noether.new.Lagrangian} coincide and the relation is defined by $\lambda_q = v_\kappa +  D_2 X_v(q, \dot q, u)^\top \kappa - D_2 C(q, \dot q, u)$ and $\lambda_q = \kappa$.

Let us consider the controlled Lagrangian system \eqref{eq:force_controlled_EL}. Noether's theorem for controlled Lagrangian systems can be applied in this case \cite{MarsdenWest01}. 
\begin{theorem} \label{th:Noether.Lagrang.syst} If a Lagrangian $L: T\mathcal{Q} \to \mathbb{R}$ is invariant with respect to $\Phi^{TQ}_s$ and the force $f_L^{\mathcal{E}}$ is orthogonal to the one-parameter group of transformations, i.e.
$$f_L^{\mathcal{E}}(q(t),\dot{q}(t),u(t)) \, \left. \frac{\partial \Phi_s(q)}{\partial s} \right|_{s=0} = 0 \qquad \text{for any } (q, \dot q, u) \in \mathcal{E},$$
then the associated momentum map $I_L$ is a conserved quantity along any admissible trajectory of the controlled Lagrangian system
$$ I_L(q, \dot q) = \frac{\partial L(q, \dot q)}{\partial \dot q} X^{\mathcal{Q}}(q). $$ 
\end{theorem}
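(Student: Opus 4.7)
The plan is to prove conservation of $I_L$ by a direct computation of its time derivative along any admissible trajectory $(q(t),\dot q(t),u(t))\in\mathcal{E}$ satisfying the force-controlled Euler--Lagrange equations \eqref{eq:force_controlled_EL}, and then to show that the two terms that appear after applying the product rule vanish separately: one by the infinitesimal form of the $\mathcal{G}$-invariance of $L$, the other by the orthogonality hypothesis on $f_L^{\mathcal{E}}$.

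Concretely, I would first unpack the hypothesis $L\circ \Phi^{TQ}_s = L$ by differentiating it at $s=0$. Writing $X^{\mathcal{Q}}(q) = \left.\partial_s \Phi_s(q)\right|_{s=0}$ and noting that the induced tangent-lift infinitesimal generator evaluated at $(q,\dot q)$ has components $(X^{\mathcal{Q}}(q),\,D_q X^{\mathcal{Q}}(q)\,\dot q)$, this gives the pointwise identity
\begin{equation*}
D_1 L(q,\dot q)\,X^{\mathcal{Q}}(q) + D_2 L(q,\dot q)\,D_q X^{\mathcal{Q}}(q)\,\dot q \;=\; 0.
\end{equation*}
Next, for a solution $q(t)$, I would apply the chain rule to observe that $\tfrac{d}{dt}X^{\mathcal{Q}}(q(t)) = D_q X^{\mathcal{Q}}(q(t))\,\dot q(t)$, so the above identity already encodes the velocity-like term that will appear in $\dot I_L$.

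The core computation is then
\begin{equation*}
\frac{d}{dt}I_L(q(t),\dot q(t)) \;=\; \frac{d}{dt}\!\left[D_2 L(q,\dot q)\right]\,X^{\mathcal{Q}}(q) \;+\; D_2 L(q,\dot q)\,D_q X^{\mathcal{Q}}(q)\,\dot q .
\end{equation*}
Substituting the forced Euler--Lagrange equation \eqref{eq:force_controlled_EL}, namely $\tfrac{d}{dt}D_2 L = D_1 L + f_L^{\mathcal{E}}$, and reorganising, yields
\begin{equation*}
\frac{d}{dt}I_L \;=\; \underbrace{\bigl[D_1 L\,X^{\mathcal{Q}}(q) + D_2 L\,D_q X^{\mathcal{Q}}(q)\,\dot q\bigr]}_{=0 \text{ by infinitesimal invariance}} \;+\; \underbrace{f_L^{\mathcal{E}}(q,\dot q,u)\,X^{\mathcal{Q}}(q)}_{=0 \text{ by orthogonality}}\;=\;0.
\end{equation*}

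There is no real obstacle here; the argument is the standard forced-Noether computation, and the only subtlety is bookkeeping the tangent lift of $\Phi_s$ so that the velocity component of $X^{T\mathcal{Q}}$ is correctly identified with $D_q X^{\mathcal{Q}}(q)\,\dot q$ both in the infinitesimal invariance and in $\tfrac{d}{dt}X^{\mathcal{Q}}(q(t))$. Since this identification is exactly what makes the two computations match term by term, once the tangent-lift formula is written down explicitly the proof reduces to the three lines above. Note in particular that the argument is completely independent of the control $u$: the forcing contracts with $X^{\mathcal{Q}}(q)$ and is annihilated regardless of which admissible $u(t)$ is used, which is precisely why $I_L$ is conserved along \emph{every} admissible trajectory, not merely along optimal ones.
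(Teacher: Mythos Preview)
Your proof is correct and is precisely the standard forced-Noether argument. The paper does not actually supply its own proof of this theorem; it states the result and attributes it to \cite{MarsdenWest01}, so there is nothing to compare beyond noting that your direct time-differentiation of $I_L$ along solutions of \eqref{eq:force_controlled_EL}, combined with the infinitesimal invariance identity and the orthogonality hypothesis, is exactly the classical route one finds in that reference.
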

Assume that Noether's theorem for controlled Lagrangian systems applies. This implies that the control system given by \eqref{eq:force_controlled_EL} is equivariant with respect to the action $\Phi^\mathcal{E}_s$. Assume in addition that the running cost $C$ is invariant with respect to $\Phi^\mathcal{E}_s$. This implies by Theorem~\ref{th:OCP.symmetric} that $\tilde{\mathcal{L}}_L^\mathcal{E}$ is invariant with respect to the action by $\Phi^\mathcal{E}_s$ and Theorem~\ref{th:Noether.new.Lagrangian} can be applied. The associated momentum map is given by
$$ 
\begin{aligned}
I_{\tilde{\mathcal{L}}_L^\mathcal{E}}(y, \dot y, u) &= \frac{\partial \tilde{\mathcal{L}}_L^\mathcal{E}(y, \dot y, u)}{\partial \dot y} X^{T\mathcal{Q}}(y) \\
& =  \left(D_{22} L(q,\dot q) \, v_{\xi} + \left[ D_{21} L(q,\dot q) + D_2 f_L^{\mathcal{E}}(q,\dot q,u) \right] \, \xi - D_2 C(q,\dot q,u) \right) \left. \frac{\partial \Phi_s(q)}{\partial s}\right|_{s=0}\\  
& +  \frac{\partial L(q, \dot q)}{\partial \dot q} \left. \frac{\partial T_q\Phi_s(\xi)}{\partial s} \right|_{s=0}
\end{aligned}
$$
Applying the Legendre transform, we can express the momentum maps of the Lagrangian system as follows.
$$ I_L(q, \dot q) = p_q X^{\mathcal{Q}}(q) , \qquad I_{\tilde{\mathcal{L}}_L^\mathcal{E}}(y, \dot y, u) = \varpi_q  \left. \frac{\partial \Phi_s(q)}{\partial s}\right|_{s=0}  + p_q \left. \frac{\partial T_q\Phi_s(\xi)}{\partial s} \right|_{s=0}.$$ 
Notice that both $I_L$ and $I_{\tilde{\mathcal{L}}_L^\mathcal{E}}$ are conserved quantities of the optimal control problem and in addition $I_L$ and $I_{\tilde{\mathcal{L}}_L^\mathcal{E}}$ are functionally independent.\\

Noether's theorem can also be approached using the Hamiltonian formulation of the necessary conditions for optimality. The control Hamiltonian $\mathcal{H}_{-1}$ is defined on $T^* \mathcal{M} \oplus_\mathcal{M} \mathcal{E}$, which has a natural pre-symplectic structure. Noether's theorem of Hamiltonian systems on pre-symplectic manifolds was considered in \cite{Ciaglia2022}. Another approach adapted in \cite{Schaft1987} treats implicit Hamiltonian systems based on port-Hamiltonian formalism. These approaches lead to equivalent conservation laws as those obtained by our variational approach in Theorem~\ref{th:Noether.OCP} and Theorem~\ref{th:Noether.new.Lagrangian}. Notice, that we have restricted ourselves to symmetries obtained by the lifted Lie group actions. In general, a Hamiltonian $\mathcal{H}_{-1}$ can admit symmetries which are not generated by lifted actions. This case was considered in \cite{Blankenstein01} and such symmetries are called generalized symmetries of the OCP. As in Theorem~\ref{th:equiv.invariances}, if $\mathcal{H}_{-1}$ is invariant with respect to a generalized symmetry and $\alpha_{\mathcal{Q}}^{\mathcal{E}}$ is equivariant, then $\tilde{\mathcal{L}}^\mathcal{E}$ is invariant and Theorem~\ref{th:Noether.new.Lagrangian} still applies.

\section{Conclusions and future work}

In this work, we have generalized and analyzed in depth the new Lagrangian approach for the optimal control of second-order systems proposed in \cite{Leyendecker2024new}. In that article, the setting was restricted to a particular subset of optimal control problems, namely, those with cost functions quadratic in the controls and affine-controlled SODEs. The theory presented here now expands that setting to accommodate arbitrary cost functions and controlled SODEs. An extensive analysis of the geometric setting of this new approach has been performed, linking it to the original PMP through an extension of Tulczyjew's triple to accommodate the controls.\\

Our approach is rooted in the calculus of variations, and as such, we ascribe to a certain amount of analytic and algebraic regularity. In particular, we have introduced some definitions to frame the algebraic regularity assumed in this work. However, this is mainly done for simplicity's sake. The analysis performed shows that this approach and PMP in the same setting are one and the same, simply expressed in different spaces. This points to this new approach being applicable in a wider, less smooth setting. In particular, one may weaken the analytic regularity requirements, working with needle variations \cite{McShane39,Liberzon12}, making it possible to tackle algebraically singular problems. Abnormal multipliers may also be considered, though that may lead to a Routhian, i.e. a Lagrangian and Hamiltonian hybrid, approach.\\

We compared our approach to another with a similar objective, based on the reformulation of the OCP as higher-order Lagrangians \cite{Colombo2010,Colombo2016}. While said approach is very interesting and applicable to systems of arbitrary order, we were able to show that it is rather restrictive in its algebraic regularity requirements, somewhat limiting its applicability.\\

Force-controlled Euler-Lagrange equations have been studied as well, being one of our primary motivations to propose this approach. While the required changes are not fundamental, they significantly alter the geometry of the problem. In \cite{Konopik25a}, the numerical application and analysis of the approach proposed in \cite{Leyendecker2024new} was performed with very good results at the level of the OCP. However, it was also observed that, when working with force-controlled Euler-Lagrange equations as controlled SODEs, the resulting methods for the separate state and adjoint dynamics were not necessarily ``symplectic''. 
Actually, it can be shown that they are each symplectic but with respect to a symplectic form that does not necessarily coincide with the Poincar{\'e}-Cartan 2-form associated to the original Lagrangian of the mechanical system. 
We are confident that through the discretization of $\tilde{\mathcal{L}}_L^{\mathcal{E}}$ and $\tilde{\mathcal{L}}_L$ instead, the issue will be resolved. We will present results in that regard in an upcoming work.\\

Besides this previous point, in this work we have also shed some light on the role of the boundary terms that appear in the process. This led us to the realization that the discrete new Lagrangian approach can be viewed as a transformation of the discrete OCP, expressible in terms of generating functions of the first kind, as generating functions of mixed kind, first in positions, fourth in velocities.\\

Finally, we have also studied the symmetries of the optimal control problem in relation to the symmetries of the new Lagrangian formulation. In particular, we have shown that the new Lagrangian inherits the symmetries of the control Hamiltonian $\mathcal{H}_{-1}$. This will allow us to ensure preservation properties of the numerical methods based on the new Lagrangian and a discrete variational formulation. In addition, we have proven Noether's theorem based on the new formulation, which leads to the same conserved quantities as the well known results in optimal control. 
The question of reduction in this context is an interesting one to tackle in the future. The simpler case of systems on Lie groups may be easily handled and can be of great importance in applications such as multibody systems.\\




\section*{Acknowledgements}

The authors acknowledge the support of Deutsche Forschungsgemeinschaft (DFG) with the projects: LE 1841/12-1, AOBJ: 692092 and OB 368/5-1, AOBJ: 692093.







\printbibliography

\end{document}